\DeclareMathAlphabet{\mathpzc}{OT1}{pzc}{m}{it}
\newtheorem{theorem}{Theorem}[section]
\newtheorem{theorem-definition}[theorem]{Theorem-Definition}
\newtheorem{lemma-definition}[theorem]{Lemma-Definition}
\newtheorem{definition-prop}[theorem]{Proposition-Definition}
\newtheorem{corollary}[theorem]{Corollary}
\newtheorem{prop}[theorem]{Proposition}
\newtheorem{lemma}[theorem]{Lemma}
\newtheorem{cor}[theorem]{Corollary}
\newtheorem{definition}[theorem]{Definition}
\newtheorem{example}[theorem]{Example}
\newenvironment{remark}{\vspace{4pt}\noindent\textbf{Remark.}}{\qed\vspace{4pt}}
\newcommand{\Spf}{\ensuremath{\mathrm{Spf}\,}}
\newcommand{\Spec}{\ensuremath{\mathrm{Spec}\,}}
\newcommand{\LL}{\ensuremath{\mathbb{L}}}
\newcommand{\N}{\ensuremath{\mathbb{N}}}
\newcommand{\Z}{\ensuremath{\mathbb{Z}}}
\newcommand{\Q}{\ensuremath{\mathbb{Q}}}
\newcommand{\R}{\ensuremath{\mathbb{R}}}
\newcommand{\C}{\ensuremath{\mathbb{C}}}
\newcommand{\A}{\ensuremath{\mathbb{A}}}
\newcommand{\X}{\ensuremath{\mathscr{X}}}
\newcommand{\mY}{\ensuremath{\mathfrak{Y}}}
\renewcommand{\R}{\ensuremath{\mathbb{R}}}
\renewcommand{\C}{\ensuremath{\mathbb{C}}}
\renewcommand{\A}{\ensuremath{\mathbb{A}}}
\renewcommand{\X}{\ensuremath{\mathscr{X}}}
\renewcommand{\mY}{\ensuremath{\mathscr{Y}}}
\newcommand{\mZ}{\ensuremath{\mathfrak{Z}}}
\newcommand{\mU}{\ensuremath{\mathscr{U}}}
\newcommand{\mE}{\ensuremath{\mathfrak{E}}}
\newcommand{\mV}{\ensuremath{\mathscr{V}}}
\newcommand{\mR}{\ensuremath{\mathfrak{R}}}
\numberwithin{equation}{section}
\begin{document}
\title[Singular cohomology of the analytic Milnor fiber]{Singular cohomology of the analytic Milnor fiber,
and mixed Hodge structure on the nearby cohomology}
\author{Johannes Nicaise}
\address{Universit\'e Lille 1\\
Laboratoire Painlev\'e, CNRS - UMR 8524\\ Cit\'e Scientifique\\59655 Villeneuve d'Ascq C\'edex \\
France} \email{johannes.nicaise@math.univ-lille1.fr}
 \begin{abstract}
 We describe the homotopy type of the analytic Milnor fiber in
 terms of a strictly semi-stable model,
 and we show that its singular cohomology coincides with the
 weight zero part of the mixed Hodge structure on the nearby
 cohomology. We give a similar expression for Denef and Loeser's motivic Milnor
 fiber in terms of a strictly semi-stable model.

\vspace{5pt} \noindent MSC 2000: 32S30, 32S55, 14D07, 14G22
 \end{abstract}

 \maketitle

\section{Introduction}
Let $k$ be any field, and let $X$ be a $k$-variety, endowed with a
morphism $f:X\rightarrow \mathrm{Spec}\,k[t]$. Let $x$ be a closed
point on the special fiber $X_s=f^{-1}(0)$. The analytic Milnor
fiber $\mathscr{F}_x$ of $f$ at $x$ was introduced and studied by
Julien Sebag and the author in \cite{NiSe-Milnor,NiSe,NiSe3}. The
object $\mathscr{F}_x$ is defined as the generic fiber of the
special formal scheme
$$\widehat{f}:\mathrm{Spf}\,\widehat{\mathcal{O}}_{X,x}\rightarrow
\mathrm{Spf}\,k[[t]]$$ It is an analytic space over the field of
Laurent series $k((t))$, and serves as a non-archimedean model of
the classical topological Milnor fibration (for $k=\C$ and $X$
smooth). The arithmetic and geometric properties of
$\mathscr{F}_x$ reflect the nature of the singularity of $f$ at
$x$. For instance, the $\ell$-adic cohomology of $\mathscr{F}_x$
is canonically isomorphic to the $\ell$-adic nearby cohomology of
$f$ at $x$ \cite[3.5]{berk-vanish2}, and the local motivic zeta
function of $f$ at $x$ can be realized as a ``Weil generating
series'' of $\mathscr{F}_x$ \cite[9.7]{Ni-trace}. If $X$ is normal
at $x$, then $\mathscr{F}_x$ is a complete invariant of the formal
germ $(f,x)$ \cite[8.8]{Ni-trace}.

In the present article, we study the topology of $\mathscr{F}_x$,
considered as a $k((t))$-analytic space in the sense of Berkovich
\cite{Berk1}. We denote by $\widehat{k((t))^a}$ the completion of
an algebraic closure of $k((t))$. Our first main result (Theorem
\ref{hom-mil}) describes the homotopy type of
$$\overline{\mathscr{F}}_x:=\mathscr{F}_x\widehat{\times}_{k((t))}\widehat{k((t))^a}$$
in terms of a so-called strictly semi-stable reduction of the germ
$(f,x)$. The second main result (Theorem \ref{main}) states that,
if $k=\C$, the singular cohomology of $\overline{\mathscr{F}}_x$
coincides with the weight zero part of the mixed Hodge structure
on the nearby cohomology of $f$ at $x$
\cite{steenbrink-vanish}\cite{navarro-invent}.

These are local analogs of results by Berkovich \cite{berk-limit}.
He showed that the weight zero part of the limit mixed Hodge
structure of a proper family $Y\rightarrow \mathrm{Spec}\,\C[t]$
coincides with the singular cohomology of the nearby fiber
$\widehat{Y}_\eta\times_{\C((t))}\widehat{\C((t))^a}$, where
$\widehat{Y}$ denotes the $t$-adic completion of $Y$ and
$\widehat{Y}_\eta$ its generic fiber.
 Our proofs closely follow the ideas in \cite{berk-limit}, but we need
 additional work to pass from the global situation to our local
 one. The most important tool in our arguments is Berkovich' description
of the homotopy type of the generic fiber of a poly-stable formal
scheme \cite{berk-contract}.

The above theorems have natural motivic counterparts. We give an
expression for Denef and Loeser's motivic Milnor fiber of $f$ at
$x$ in terms of a strictly semi-stable model (Theorem
\ref{motmil}). As we showed in \cite{Ni-trace}, this motivic
Milnor fiber can be realized as a ``motivic volume'' of the
analytic Milnor fiber $\mathscr{F}_x$. The expression in Theorem
\ref{motmil} is similar in spirit to our description of the
homotopy type of $\mathscr{F}_x$ but uses different techniques
(motivic integration) and yields another type of information
(class in the Grothendieck ring).

\bigskip

 Let us give a survey of the structure of the paper.
In Section \ref{sec-prelim}, we recall some basic notions about
special formal schemes and their generic fibers, and about the
analytic Milnor fiber. Section \ref{sec-simp} contains the main
technical tools of the paper: we define the simplicial set
associated to a strictly semi-stable $k$-variety $X$, and we
explain how it can be used to describe the homotopy type of the
generic fiber of a strictly semi-stable formal scheme $\X$,
following results by Berkovich. Moreover, we prove a crucial
result about the homotopy type of the generic fiber of the
completion of $X$ along a union of irreducible components
(Proposition \ref{deform} and its Corollary \ref{ssvar}). It is
this property that allows to pass from Berkovich' global situation
to our local one.

In Section \ref{sec-hom}, these tools are applied to establish
homotopy-equivalences between certain analytic spaces, and to
study the homotopy type of the analytic Milnor fiber (Theorem
\ref{hom-mil}). The key result is Proposition \ref{homeq}, which
we now briefly explain. Any special formal $k[[t]]$-scheme $\X$
can be considered as a special formal $k$-scheme $\X^k$, by
forgetting the $k[[t]]$-structure. The generic fiber of $\X^k$
(where $k$ carries the trivial absolute value) is naturally
fibered over $[0,1[$ by evaluating the points of the generic fiber
(which are multiplicative semi-norms) in $t$. Proposition
\ref{homeq} studies the homotopy type of the fibers of this
family.


 Section \ref{sec-mhs} contains the results concerning the
mixed Hodge structure on the nearby cohomology of $f$ at $x$
(where $k=\C$). We recall Peters and Steenbrink's construction of
this mixed Hodge structure, and we show how the weight zero part
can be computed on a strictly semi-stable reduction (Proposition
\ref{mhs}). Combining this result with the ones in Section
\ref{sec-hom}, we see that the singular cohomology of the analytic
Milnor fiber coincides with the weight zero part of the mixed
Hodge structure on the nearby cohomology of $f$ at $x$ (Theorem
\ref{main}).

In the final Section \ref{sec-motmil}, we compare the preceding
results with the motivic setting, and give an expression for Denef
and Loeser's motivic nearby cycles and motivic Milnor fiber in
terms of a strictly semi-stable model (Theorem \ref{motmil} and
Corollary \ref{cor-motmil}).
\section{Preliminaries}\label{sec-prelim}
\subsection{Some notation}
If $S$ is any scheme, a $S$-variety is a separated reduced scheme
of finite type over $S$.

For any locally ringed space $Y$, we denote by $|Y|$ its
underlying topological space; if no risk of confusion arises,
we'll omit the vertical bars and we'll denote the underlying
topological space by $Y$, to simplify notation.

For any field $F$, we denote by $F^{a}$ an algebraic closure, and
by $F^s$ the separable closure of $F$ in $F^a$. If $L$ is a
non-archimedean field (we do not exclude the trivial valuation),
then the absolute value on $L$ extends uniquely to an absolute
value on $L^a$, and we denote by $\widehat{L^s}$ and
$\widehat{L^{a}}$
 the completions of $L^s$, resp. $L^a$ (these completions coincide if the valuation is non-trivial).
  We will
work in the category of $L$-analytic spaces as introduced by
Berkovich \cite{Berk-etale}. For any $L$-analytic space $X$, we
put $\overline{X}=X\widehat{\times}_L \widehat{L^a}$.

If $R$ is a discrete valuation ring, with residue field $k$, and
$Y$ is a scheme over $R$, then we denote its special fiber
$Y\times_R k$ by $Y_s$.

If $T$ is any topological space and $S$ is a subspace of $T$, then
a strong deformation retract of $T$ onto $S$ is a continuous map
$\phi:T\times[0,1]\rightarrow T$ such that $\phi(\cdot,0)$ is the
identity, $\phi(s,r)=s$ for any point $s$ of $S$ and any $r\in
[0,1]$, and $\phi(\cdot,1)$ is a surjection onto $S$.

\subsection{Strictly semi-stable (formal)
schemes}\label{subsec-strictsemi} Let $K$ be a complete discretely
valued field, with valuation $v_K:K^*\rightarrow \Z$. We do not
assume that the valuation is non-trivial, but we will assume that
it is normalized (i.e. $v_K(K^*)=\Z$ or $v_K(K^*)=\{0\}$). We
denote by $K^o$ the ring of integers in $K$, and by
$\widetilde{K}$ the residue field. If $v_K$ is trivial, then
$K=K^o=\widetilde{K}$. For each $r\in \,]0,1[$, we define an
absolute value $|\cdot|_{r}$ on $K$ by $|x|_{r}=r^{v_K(x)}$ and we
denote by $K_r=(K,|\cdot|_{r})$ the corresponding non-archimedean
field. Moreover, we put $K_0=(\widetilde{K},|\cdot|_{0})$ where
$|\cdot|_{0}$ is the trivial absolute value. The distinction
between $K$ (which only carries a discrete valuation) and the
fields $K_r$ (which are non-archimedean fields) is crucial for the
applications in this article.

If $L=(L,|\cdot|_L)$ is any non-archimedean field, then we denote
by $L^o$ the ring of integers and by $\widetilde{L}$ the residue
field. In particular, $K_r^o=K^o$ and
$\widetilde{K_r}=\widetilde{K}$ for $r\in \,]0,1[$.
 A formal scheme $\X$
over $L^o$ is called $stft$ if it is separated, and topologically
of finite type over $L^o$. We denote by $\X_\eta$ its generic
fiber, a $L$-analytic space, and by $\X_s$ its special fiber, a
separated scheme of finite type over $\widetilde{L}$. There is a
natural specialization map (of sets\footnote{In fact, if
$|\cdot|_L$ is non-trivial, $sp_{\X}$ can be enhanced to a
morphism of ringed sites: the specialization morphism
$sp_{\X}:\X_\eta\rightarrow \X$, where $\X_\eta$ is endowed with
the strong $G$-topology; note that $|\X|=|\X_s|$.})
$sp_{\X}:\X_\eta\rightarrow \X_s$. If $X$ is a separated scheme of
finite type over $L$, we denote by $X^{an}$ the $L$-analytic space
associated to $X$ via non-archimedean GAGA \cite[3.4-5]{Berk1}.

A special formal $K^o$-scheme is a separated adic Noetherian
formal scheme $\X$, endowed with a morphism $\X\rightarrow
\mathrm{Spf}\,K^o$, such that $\X/\mathcal{J}$ is of finite type
over $K^o$ for any ideal of definition $\mathcal{J}$ on $\X$ (this
definition is slightly more restrictive then the one used by
Berkovich in \cite{berk-vanish2}). In particular, any $stft$
formal $K^o$-scheme is special. We denote by $(SpF/K^o)$ the
category of special formal $K^o$-schemes.

There is a functor
$$(\cdot)_0:(SpF/K^o)\rightarrow (sft/\widetilde{K}):\X\mapsto \X_0$$
to the category $(sft/\widetilde{K})$ of separated
$\widetilde{K}$-schemes of finite type, where $\X_0$ is the
\textit{reduction} of $\X$ (the closed subscheme of $\X$ defined
by the largest ideal of definition in $\mathcal{O}_{\X}$).

Moreover, we consider the functor
$$(\cdot)_s:(SpF/K^o)\rightarrow (SpF/\widetilde{K}):\X\mapsto \X_s=\X\times_{\mathrm{Spf}\,K^o}
\mathrm{Spec}\,\widetilde{K}$$
mapping $\X$ to its special fiber $\X_s$.
If $\X$ is $stft$ over $K^o$, then $\X_s$ is a scheme, and $\X_0$
is the maximal reduced closed subscheme of $\X_s$; in any case,
$(\X_s)_0\cong \X_0$.

 For any
special formal $K^o$-scheme $\X$ and any value $r\in \,]0,1[$, we
denote by $\X(r)$ the formal scheme $\X$ viewed as a special
formal $K_r^o$-scheme via the identification $K^o=K_r^o$. We
denote by $\X(r)_\eta$ the generic fiber of $\X(r)$ in the
category of $K_r$-analytic spaces. We also put $\X(0)=\X_s$. It is
a special formal scheme over $\widetilde{K}$, and we denote its
generic fiber in the category of $K_0$-analytic spaces by
$\X(0)_\eta$. For each $r\in[0,1[$, there is a canonical
specialization map (of sets) $sp_{\X(r)}:\X(r)_\eta \rightarrow
\X_0$.

If $X$ is a separated $\widetilde{K}$-scheme of finite type, then
we can view $X$ also as a $stft$ formal $\widetilde{K}$-scheme,
and there exists a canonical morphism of $K_0$-analytic spaces
$X_\eta\rightarrow X^{an}$, which is an isomorphism iff $X$ is
proper \cite[1.10]{thuillier}. Hence, if $\X$ is a proper $stft$
formal $R$-scheme, then $\X(0)_\eta$ is canonically isomorphic to
$\X_s^{an}$.

 A $stft$ formal $K^o$-scheme is called
strictly semi-stable if it can be covered with affine open formal
subschemes $\mU$, endowed with an \'etale morphism of formal
$K^o$-schemes of the form
$$\mU\rightarrow \mathrm{Spf}\,K^o\{x_0,\ldots,x_m\}/(x_0\cdot x_1\cdot \ldots\cdot x_p-\pi)$$
for some $p\leq m$, where $\pi$ generates the maximal ideal of
$K^o$ (in particular, $\pi=0$ if the valuation on $K$ is trivial).

This definition includes as a special case the class of strictly
semi-stable $\widetilde{K}$-varieties,
i.e. varieties which admit Zariski-locally an \'etale map to a
$\widetilde{K}$-scheme of the form
$\mathrm{Spec}\,\widetilde{K}[x_0,\ldots,x_m]/(x_0\cdot x_1\cdot
\ldots\cdot x_p)$. If $\X$ is a strictly semi-stable formal
$K^o$-scheme, then $\X_s$ is a strictly semi-stable
$\widetilde{K}$-variety.

If $X$ is a strictly semi-stable $\widetilde{K}$-variety, we
denote by $Irr(X)$ its set of irreducible components. For any
non-empty subset $J$ of $Irr(X)$, we put $X_J=\cap_{V\in J}V$ and
$X_J^o=X_J\setminus \cup_{W\in (Irr(X)\setminus J)}W$.

\subsection{The analytic Milnor fiber}\label{subsec-anmil}
 Let $k$ be any field, and put $K=k((t))$, endowed with the $t$-adic
 valuation. Fix a value $r\in \,]0,1[$.
 Let $X$ be a variety over $k$, endowed
with a morphism of $k$-schemes $f:X\rightarrow
\mathrm{Spec}\,k[t]$. The $t$-adic completion of $f$ is a $stft$
formal $K^o$-scheme $\X$, whose special fiber $\X_s$ is
canonically isomorphic to the fiber of $f$ over the origin. For
any closed point $x$ of $\X_s$, the set $sp^{-1}_{\X(r)}(x)$ is
open in $\X(r)_\eta$ and inherits the structure of a
$K_r$-analytic space. By \cite[0.2.7]{bert} it is canonically
isomorphic to the generic fiber of the special formal
$K_r^o$-scheme $\mathrm{Spf}\,\widehat{\mathcal{O}}_{X,x}$, where
the $K_r^o$-structure is defined by $f$. In \cite{NiSe}, we called
$sp^{-1}_{\X(r)}(x)$ the \textit{analytic Milnor fiber} of $f$ at
$x$, and we denoted it by $\mathscr{F}_x$. By
\cite[3.5]{berk-vanish2}, if $k$ is separably closed, the
$\ell$-adic cohomology of $\mathscr{F}_x$ is canonically
isomorphic to
 the cohomology of the stalk of the complex
of $\ell$-adic nearby cycles at $x$ (here $\ell$ is a prime
invertible in $k$): for each integer $i\geq 0$, there is a
canonical $G(K^s/K)$-equivariant isomorphism
$$H^i_{\acute{e}t}(\mathscr{F}_x\widehat{\times}_{K_r}\widehat{K_r^s},\Q_\ell)\cong R^i\psi_\eta(\Q_\ell)_x$$
%
%

\section{The simplicial set associated to a strictly
semi-stable variety}\label{sec-simp}
\subsection{Definitions}\label{simpint}
In this section, we define the simplicial complex $\Delta(X)$
associated to a strictly semi-stable $\widetilde{K}$-variety $X$,
and we list some basic properties.

\begin{remark}
In
 \cite[\S\,4.1]{thuillier}, Thuillier defines $\Delta(X)$ as an oriented simplicial
 complex by choosing a total order on the set of irreducible components of
 $X$. It seems more natural to construct $\Delta(X)$ as an
 unoriented simplicial set, independent of all choices. This is
 the approach we adopt here. The result is isomorphic
 to the unoriented simplicial set underlying Thuillier's
 construction.
 \end{remark}

For any pair of categories $\mathcal{C},\,\mathcal{D}$, we denote
by $\mathcal{D}^o\mathcal{C}$ the category of presheaves on
$\mathcal{D}$ with values in $\mathcal{C}$. Its objects are
functors $\mathcal{D}^{o}\rightarrow \mathcal{C}$, where
$\mathcal{D}^o$ denotes the opposite category of $\mathcal{D}$,
and its morphisms are natural transformations of functors. We
denote by $(Ens)$ the category of sets.

For any integer $n\geq 0$, we denote by $[n]$ the set
$\{0,\ldots,n\}$, and we define a category $\Delta$ whose objects
are the sets $[n]$ for $n\geq 0$, and whose arrows are maps of
 sets. The category
$\Delta^o(Ens)$ is called the category of (unoriented) simplicial
sets.

The object of $\Delta^o(Ens)$ represented by $[n]$ is called the
standard $n$-simplex, and denoted by $\Delta[n]$. If $\Sigma$ is a
simplicial set and $n\geq 0$ is an integer, we call $\Sigma([n])$
the set of $n$-simplices of $\Sigma$. Note that there exists a
natural bijection $\Sigma([n])\cong
Hom_{\Delta^o(Ens)}(\Delta[n],\Sigma)$. A $n$-simplex $\gamma$ is
called degenerate if there exists a map of sets $g:[n]\rightarrow
[n-1]$ such that $\gamma$ belongs to the image of
$\Sigma(g):\Sigma([n-1])\rightarrow \Sigma([n])$. A $m$-simplex
$\gamma_1$ and a $n$-simplex  $\gamma_2$ are called equivalent if
there exist maps $[m]\rightarrow [n]$ and $[n]\rightarrow [m]$
mapping $\gamma_2$ to $\gamma_1$, resp. $\gamma_1$ to $\gamma_2$.
For any set $I$, we define the associated simplicial set
$\Delta_I$ by $$\Delta_I([n])=Hom_{(Ens)}([n],I)$$ for all $n\geq
0$.

There is a natural geometric realization functor
$$|\cdot|:\Delta^o(Ens)\rightarrow (Ke):\Sigma \mapsto |\Sigma|$$
where $(Ke)$ denotes the category of Kelley spaces (topological
Hausdorff spaces $T$ such that a subset of $T$ is closed iff its
intersection with all compact subsets of $T$ is closed). This
functor is characterized by the fact that it commutes with direct
limits, maps the standard $n$-simplex $\Delta[n]$ to the
topological $n$-simplex
$$\Delta_n=\{(u_0,\ldots,u_n)\in
[0,1]^{n+1}\,|\,\sum_{i=0}^{n}u_i=1\}$$ and sends a map of sets
$\alpha:[m]\rightarrow [n]$ to the unique affine map
$|\alpha|:\Delta_m\rightarrow \Delta_n$ sending the vertex $v_i$
of $\Delta_m$ to the vertex $v_{\alpha(i)}$ of $\Delta_n$ (for
$n\geq 0$ and $i\in [n]$, the vertex $v_i$ of $\Delta_n$ is the
point with coordinates $u_j=\delta_{ij}, \,j=0,\ldots,n$).

If $\Sigma$ is a simplicial set, then a $n$-cell of $|\Sigma|$ is
the image of the interior $(\Delta_n)^o$ under the map
$|\gamma|:\Delta_n\rightarrow |\Sigma|$ induced by some
non-degenerate $n$-simplex $\gamma\in \Sigma([n])$. Two simplices
$\gamma_1\in \Sigma([m])$ and $\gamma_2\in \Sigma([n])$ are
equivalent, iff the images of the corresponding maps
$|\gamma_1|:\Delta_m\rightarrow |\Sigma|$ and
$|\gamma_2|:\Delta_n\rightarrow |\Sigma|$ coincide.

Now let $X$ be a strictly semi-stable $\widetilde{K}$-variety. We
denote by $Str(X)$ the set consisting of the generic points of the
closed subsets $X_J$ of $X$, where $J$ varies over the non-empty
subsets of $Irr(X)$. In particular, $Str(X_J)\subset Str(X)$ is
the set of generic points of the smooth variety $X_J$. Identifying
an element of $Irr(X)$ with its generic point, we can consider
$Irr(X)$ as a subset of $Str(X)$ in a natural way.

For any point $x$ of $X$, we denote by $\Psi(x)$ the set of
elements of $Irr(X)$ containing $x$~; if we want to make $X$
explicit, we write $\Psi_X(x)$ instead of $\Psi(x)$.
For any point $x$ of $Str(X)$, we denote by $S_x$ the connected
component of $X_{\Psi(x)}^o$ containing $x$. Then $x$ is the
generic point of $S_x$, and $\{S_x\,|\,x\in Str(X)\}$ is a finite
stratification of $X$ into smooth irreducible locally closed
subsets. We call any union of strata a strata subset of $X$.

We define a partial ordering on $Str(X)$ as follows: $x\leq y$ iff
$y$ belongs to the Zariski closure of $\{x\}$ in $X$. For each
$x\in Str(X)$ we consider the simplicial set
$\Delta(x)=\Delta_{\Psi(x)}$. For $x\leq y$, we have
$\Psi(x)\subset \Psi(y)$, and hence a natural morphism of
simplicial sets $\Delta(x)\rightarrow \Delta(y)$. This defines a
functor
$$\Delta(\cdot):Str(X)\rightarrow \Delta^o(Ens)$$ and we define
$\Delta(X)$ as its colimit:
$$\Delta(X)=\lim_{\stackrel{\longrightarrow}{Str(X)}}\Delta(\cdot)$$

We can give a more explicit construction of $\Delta(X)$ as
follows. Denote, for each $n\geq 0$, by $D(X)([n])$ the set of
couples $(x,f)$ with $x\in Str(X)$, and $f$ a surjection
$[n]\rightarrow \Psi(x)$. For any map $\alpha:[m]\rightarrow [n]$,
we define a map
$$D(X)(\alpha):D(X)([n])\rightarrow D(X)([m]):(x,f)\mapsto (x',f')$$
where $f':=f\circ \alpha$, and $x'$ is the unique point of
$Str(X)$ such that $\Psi(x')=Im(f')$ and $x$ belongs to the
Zariski closure of $\{x'\}$ in $X$. In this way, $D(X)$ becomes a
simplicial set. The following lemma can be verified in a
straightforward way.

\begin{lemma}\label{simpset}
 There exists a canonical
isomorphism of simplicial sets $\Delta(X)\cong D(X)$. In
particular, for each $n\geq 0$, there is a canonical bijection
between the set of equivalence classes of non-degenerate
$n$-simplices (or, equivalently, the set of $n$-cells) of
$\Delta(X)$ and the set $\cup_{J\subset Irr(X), |J|=n+1}Str(X_J)$.
\end{lemma}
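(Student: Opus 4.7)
The plan is to build a canonical simplicial map $r:\Delta(X)\to D(X)$ out of the universal property of the colimit, and then check it is a bijection on $n$-simplices for every $n$. For each $x\in Str(X)$ I define $r_x:\Delta(x)=\Delta_{\Psi(x)}\to D(X)$ by sending $g:[n]\to\Psi(x)$ to the pair $(x',g')$, where $g'$ is the factorisation of $g$ through its image $S:=\mathrm{Im}(g)$, and $x'\in Str(X)$ is the unique point satisfying $\Psi(x')=S$ and $x\in\overline{\{x'\}}$. The geometric input is the existence and uniqueness of this $x'$: since $S\subset\Psi(x)$ forces $x\in X_S$, strict semi-stability makes $X_S$ smooth, so the connected component $C$ of $X_S$ containing $x$ is irreducible, and one takes $x'$ to be the generic point of the open stratum $C\cap X_S^o$. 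For uniqueness, any other candidate $x''$ has $\overline{\{x''\}}$ irreducible inside $X_S$ and containing $x$, hence contained in $C$, which forces $x''=x'$.

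To glue the $r_x$ into $r$ I need naturality in $Str(X)$. For $x\leq y$, the transition $\Delta(x)\to\Delta(y)$ is post-composition with $\Psi(x)\hookrightarrow\Psi(y)$, which preserves $S$; moreover $\overline{\{x\}}$ is irreducible and contained in $X_S$, so $y\in\overline{\{x\}}$ lies in the same connected component of $X_S$ as $x$, whence the point $x'$ produced from $y$ coincides with the one produced from $x$. Compatibility of the $r_x$ with the simplicial operators $D(X)(\alpha)$ for $\alpha:[m]\to[n]$ is a direct check: both sides reduce to replacing $g$ by $g\circ\alpha$ and then passing to the canonical surjective representative. For bijectivity of $r$ at level $n$, surjectivity is immediate because $r_{x'}(g')=(x',g')$ for any already surjective pair; and injectivity follows from the observation that $r_x(g)=(x',g')$ is equivalent to $(x,g)$ in the colimit via the transition $x'\leq x$, so each equivalence class contains a unique surjective representative, namely its $r$-image.

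For the second assertion, a surjective pair $(x,f)\in D(X)([n])$ is non-degenerate precisely when $f$ is also injective, i.e.\ a bijection $[n]\xrightarrow{\sim}\Psi(x)$, which forces $n+1=|\Psi(x)|$. Two such pairs are equivalent iff they differ by a permutation of $[n]$, so equivalence classes of non-degenerate $n$-simplices identify canonically with $\{x\in Str(X):|\Psi(x)|=n+1\}$. By smoothness of the $X_J$, this set is exactly $\bigcup_{J\subset Irr(X),\,|J|=n+1}Str(X_J)$, since a generic point of an irreducible component of $X_J$ necessarily has $\Psi=J$ (otherwise it would lie in the proper closed subscheme $X_{J\cup\{W\}}$). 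The only genuine obstacle throughout is the existence and uniqueness of $x'$; everything else is a careful but formal unwinding of the colimit and of the simplicial structure.
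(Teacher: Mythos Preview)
Your argument is correct. The paper does not give a proof of this lemma at all; it simply states that it ``can be verified in a straightforward way.'' Your proposal carries out precisely such a verification: you use the universal property of the colimit defining $\Delta(X)$ to produce the comparison map, identify the key geometric input (existence and uniqueness of the point $x'\in Str(X)$ with $\Psi(x')=S$ and $x\in\overline{\{x'\}}$, which uses smoothness of $X_S$), and then check bijectivity level by level. The treatment of the second assertion via the identification of non-degenerate simplices with bijections $[n]\xrightarrow{\sim}\Psi(x)$ is also the natural one. One small remark: your phrase ``the generic point of the open stratum $C\cap X_S^o$'' is slightly redundant, since $C\cap X_S^o$ is a dense open in the irreducible variety $C$ and hence has the same generic point as $C$; but this does no harm.
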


Hence, the non-degenerate $n$-simplices $\gamma$ of $\Delta(X)$
correspond to couples $(x,f)$ with $x\in Str(X)$ and $f$ a
bijection $[n]\cong \Psi(X)$. A point $z$ of the corresponding
cell of $|\Delta(X)|$ is the image of a unique point
$(u_0,\ldots,u_n)$ of $(\Delta_n)^o$ under the map
$|\gamma|:\Delta_n\rightarrow |\Delta(X)|$, and we define a tuple
$v\in \, ]0,1]^{\Psi(x)}$ by $v(i)=u_{f^{-1}(i)}$ for $i\in
\Psi(x)$. This tuple is invariant under equivalence of
non-degenerate $n$-simplices, and the point $z$ of $|\Delta(X)|$
is completely determined by the couple $(x,v)$.

\begin{definition}[Barycentric representation]\label{bary}
We call $v$ the tuple of barycentric coordinates of the point $z$,
and we call $(x,v)$ the barycentric representation of $z$.
\end{definition}

By Lemma \ref{simpset}, there is a natural bijection $x\mapsto
C_x$ from $Str(X)$ to the set of cells of $|\Delta(X)|$. Whenever
$E$ is a strata subset of $X$, we denote by $|\Delta_E(X)|$ the
union of the cells $C_x$ with $x\in Str(X)\cap E$.

\subsection{Comparison with Berkovich' definition}
Denote by $\widetilde{\Delta}$ the subcategory of $\Delta$ with
the same objects but with only injective maps. This is a full
subcategory of the category $\Lambda$ considered in
\cite[p.\,24]{berk-contract}. If $X$ is a strictly semi-stable
$\widetilde{K}$-variety, then Berkovich defines in
\cite[p.\,29]{berk-contract} an object $C(X)$ of the category
$\Lambda^o(Ens)$ of presheaves on $\Lambda$, and its geometric
realization $|C(X)|$. The aim of this section is to compare these
objects with the simplicial set $\Delta(X)$ defined above, and its
geometric realization $|\Delta(X)|$.

A simplicial set $\Sigma$ is called non-degenerate, if for any
injective map $[m]\rightarrow [n]$ in $\Delta$, the induced map
$\Sigma([n])\rightarrow \Sigma([m])$ takes non-degenerate
$n$-simplices to non-degenerate $m$-simplices. A morphism of
simplicial sets is called non-degenerate if it takes
non-degenerate simplices to non-degenerate ones. We denote the
subcategory of $\Delta^o(Ens)$ consisting of non-degenerate
simplicial sets with non-degenerate morphisms between them by
$\Delta^o(Ens)_{nd}$.

The embedding of $\widetilde{\Delta}$ in $\Lambda$, resp. $\Delta$
induces forgetful functors $F:\Lambda^o(Ens)\rightarrow
(\widetilde{\Delta})^o(Ens)$ and $G:\Delta^o(Ens)\rightarrow
(\widetilde{\Delta})^o(Ens)$.

\begin{lemma}\label{adjoint}
The functor $G:\Delta^o(Ens)\rightarrow
(\widetilde{\Delta})^o(Ens)$ has a left adjoint
$H:(\widetilde{\Delta})^o(Ens)\rightarrow \Delta^o(Ens)$. This
functor is a faithful embedding, its image is contained in
$\Delta^o(Ens)_{nd}$, and
$H:(\widetilde{\Delta})^o(Ens)\rightarrow\Delta^o(Ens)_{nd}$ is an
equivalence of categories.
\end{lemma}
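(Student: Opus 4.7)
The plan is to construct $H$ explicitly as a ``formal degeneracy completion'' and then verify the three properties in turn. For a presheaf $T$ on $\widetilde{\Delta}$, set
$$H(T)([n]) = \coprod_{s:[n]\twoheadrightarrow [m]} T([m]),$$
the coproduct ranging over all surjections out of $[n]$ in $\Delta$. For a map $\alpha:[n']\rightarrow [n]$ in $\Delta$ and an element $(s,t)\in H(T)([n])$ with $s:[n]\twoheadrightarrow [m]$, factor $s\circ \alpha$ uniquely as $i\circ s'$ with $s':[n']\twoheadrightarrow [m']$ surjective and $i:[m']\hookrightarrow [m]$ injective, and set $H(T)(\alpha)(s,t)=(s',T(i)(t))$. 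Uniqueness of epi-mono factorization in $\Delta$ makes this functorial. To check the adjunction, one observes that a morphism $\varphi:H(T)\rightarrow \Sigma$ of simplicial sets is determined by its restrictions to the summands indexed by $s=\mathrm{id}_{[n]}$ (since the other summands are forced by $\Sigma(s)$), and these restrictions assemble into a morphism $T\rightarrow G(\Sigma)$ of presheaves on $\widetilde{\Delta}$; conversely, any such morphism extends uniquely to $H(T)\rightarrow \Sigma$ by applying the $\Sigma$-images of the surjections $s$.

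Faithfulness follows at once: the unit $T\rightarrow G(H(T))$ is the inclusion of the $s=\mathrm{id}$ summand into $H(T)([n])$, hence injective, so distinct morphisms $T\rightarrow T'$ induce distinct morphisms $H(T)\rightarrow H(T')$. To see that the image lies in $\Delta^o(Ens)_{nd}$, one observes that by the construction an element $(s,t)\in H(T)([n])$ is degenerate if and only if $s$ is not the identity: if $s=s''\circ g$ with $g:[n]\twoheadrightarrow [n'']$ a non-identity surjection, then $(s,t)=H(T)(g)(s'',t)$, and conversely every degenerate simplex arises in this way by uniqueness of the factorization. Thus the non-degenerate $n$-simplices of $H(T)$ are exactly the summand $T([n])\subset H(T)([n])$ corresponding to $s=\mathrm{id}$, and both the face operators $H(T)(i)$ for injections $i$ and the morphisms $H(f)$ for $f:T\rightarrow T'$ visibly preserve this subpresheaf, so they are non-degenerate.

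For the equivalence of categories, I would construct the inverse. Given $\Sigma\in \Delta^o(Ens)_{nd}$, let $\Sigma_{nd}([n])\subset \Sigma([n])$ denote the non-degenerate $n$-simplices; the definition of a non-degenerate simplicial set guarantees that injective maps in $\Delta$ preserve $\Sigma_{nd}$, so $\Sigma_{nd}$ is a subpresheaf belonging to $\widetilde{\Delta}^o(Ens)$, and non-degenerate morphisms restrict to morphisms of these subpresheaves. The counit of the adjunction is the map
$$H(G(\Sigma))([n])=\coprod_{s:[n]\twoheadrightarrow [m]} \Sigma([m])\longrightarrow \Sigma([n]),\qquad (s,\sigma)\mapsto \Sigma(s)(\sigma),$$
which in general is not an isomorphism, but its restriction $H(\Sigma_{nd})\rightarrow \Sigma$ is: the Eilenberg--Zilber lemma asserts that every simplex of a simplicial set has a unique presentation $\sigma=\Sigma(s)(\tau)$ with $s$ surjective and $\tau$ non-degenerate, which gives bijectivity on each level. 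Hence $H$ is essentially surjective onto $\Delta^o(Ens)_{nd}$, and together with the adjunction (which matches morphism sets out of $H(T)$ with morphism sets out of $T$, since on non-degenerate targets the unit $T\rightarrow G(H(T))$ is an isomorphism) this yields an equivalence of categories. The only non-formal ingredient is the Eilenberg--Zilber lemma; the rest is bookkeeping with the epi-mono factorization in $\Delta$.
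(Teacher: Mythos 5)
Your construction of $H$ has a genuine gap, and it stems from a misreading of what $\Delta$ is in this paper. The paper's $\Delta$ is the category whose objects are the finite sets $[n]$ and whose arrows are \emph{all maps of sets}, not merely the order-preserving ones. In that category, the epi-mono factorization $s\circ\alpha = i\circ s'$ is \emph{not} unique: given any such factorization through $[m']$ and any automorphism $\sigma$ of $[m']$, the pair $(i\circ\sigma^{-1}, \sigma\circ s')$ is another one, and there are $(m'+1)!$ of them. So the recipe $H(T)(\alpha)(s,t)=(s',T(i)(t))$ is not well-defined: different choices of factorization produce genuinely different elements of your coproduct $\coprod_{s'} T([m'])$, since you have not quotiented by anything. (If you force a choice of factorization, the functor laws $H(T)(\alpha\circ\beta)=H(T)(\beta)\circ H(T)(\alpha)$ will fail, because the choices cannot be made coherently.) The same caveat afflicts your use of the Eilenberg--Zilber lemma at the end: in the unoriented setting a presentation $\sigma=\Sigma(s)(\tau)$ with $s$ surjective and $\tau$ non-degenerate is unique only up to an automorphism of the source of $\tau$, not absolutely.

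The fix is precisely what the paper does: in the explicit description of $H(S)([n])$, one takes triples $(p,f,\gamma)$ with $f:[n]\twoheadrightarrow[p]$ surjective and $\gamma\in S([p])$ and then \emph{quotients} by the relation $(p,f,\gamma)\sim(p,\varphi\circ f, \gamma')$ when $\gamma=S(\varphi)(\gamma')$ for $\varphi\in\mathrm{Aut}([p])$. After this quotient, the choice of isomorphism $[q]\cong\mathrm{Im}(f\circ\alpha)$ used to define $H(S)(\alpha)$ no longer matters, so the functor is well-defined; it then agrees with the left Kan extension / coend formula, and the rest of your argument (identification of non-degenerate simplices, construction of the quasi-inverse by passing to non-degenerate simplices) goes through in substance. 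As written, though, your $H(T)([n])$ is strictly larger than the true left adjoint whenever $T$ is nontrivial and $m\geq 1$, so it cannot satisfy the adjunction. The paper sidesteps the coordinate issue entirely by first defining $H$ as the colimit $\lim_{\longrightarrow}^{\,\widetilde\Delta/S}\Delta[n]$ and only afterwards unwinding it to the quotient set $\mathcal{S}_n(S)/\!\sim$; you went directly for the explicit formula but dropped the necessary equivalence relation.
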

\begin{proof}
For any integer $n\geq 0$, we denote by $\widetilde{\Delta}[n]$
the object of $(\widetilde{\Delta})^o(Ens)$ represented by $[n]$.
For any object $S$ of $(\widetilde{\Delta})^o(Ens)$, we denote by
$\widetilde{\Delta}/S$ the category of morphisms
$\alpha:\widetilde{\Delta}[n]\rightarrow S$, with $n\geq 0$, where
a morphism in $\widetilde{\Delta}/S$ from $\alpha$ to
$\beta:\widetilde{\Delta}[m]\rightarrow S$ is a map
$f:\widetilde{\Delta}[n]\rightarrow \widetilde{\Delta}[m]$ in
$(\widetilde{\Delta})^o(Ens)$ with $\alpha=\beta\circ f$. We
define the functor $H$ by putting $H_S(\alpha)=\Delta[n]$ for any
object $\alpha:\widetilde{\Delta}[n]\rightarrow S$ of
$\widetilde{\Delta}/S$, and
$$H(S)=\lim_{\stackrel{\longrightarrow}{
\widetilde{\Delta}/S}} H_S(\cdot)$$ In particular,
$H(\widetilde{\Delta}[n])=\Delta[n]$. The action of $H$ on
morphisms in $(\widetilde{\Delta})^o(Ens)$ is the obvious one.

Now we check that $H$ is indeed a left adjoint for $G$. Let
$\Sigma$ be any object of $\Delta^o(Ens)$, and let $S$ be any
object of $(\widetilde{\Delta})^o(Ens)$. By definition,
$$Hom_{\Delta^o(Ens)}(H(S),
\Sigma)=\lim_{\stackrel{\longrightarrow}{\widetilde{\Delta}[n]\rightarrow
S}}\Sigma([n])$$ and since
$$\Sigma([n])=G(\Sigma)([n])=Hom_{(\widetilde{\Delta})^o(Ens)}(\widetilde{\Delta}[n],G(\Sigma))$$
it suffices to note that
$$S\cong \lim_{\stackrel{\longrightarrow}{\widetilde{\Delta}[n]\rightarrow S}} \widetilde{\Delta}[n]
$$for any object $S$ of $(\widetilde{\Delta})^o(Ens)$, by
\cite[II.1.1]{Gabriel}.

For any object $S$ of $(\widetilde{\Delta})^o(Ens)$, and each
integer $n\geq 0$, we consider the set $\mathcal{S}_n(S)$
consisting of triples $(p,f,\gamma)$ where $p\geq 0$, $f$ is a
surjection $[n]\rightarrow [p]$, and $\gamma$ is an element of
$S([p])$. We define an equivalence relation on $\mathcal{S}_n(S)$
by stipulating that $(p,f,\gamma)\sim(p',f',\gamma')$ iff $p=p'$
and there exists an automorphism $\varphi$ of $[p]$ such that
$f'=\varphi\circ f$ and $\gamma=S(\varphi)(\gamma')$.

 It is not
hard to see that, for each integer $n\geq 0$, there exists a
canonical bijection between the set of $n$-simplices $H(S)([n])$
and the quotient set $\mathcal{S}_n(S)/\sim$.
  The $n$-simplex represented by $(p,f,\gamma)$ is
non-degenerate iff $p=n$. If $\alpha:[m]\rightarrow [n]$ is a
morphism in $\Delta$, then
$$H(S)(\alpha):H(S)([n])\rightarrow H(S)([m])$$ maps $(p,f,\gamma)$ to
$$(q,(f\circ \alpha:[m]\rightarrow Im(f\circ \alpha)\cong [q]),
\gamma')$$ where we chose an isomorphism $[q]\cong Im(f\circ
\alpha)$ and $\gamma'$ is the image of $\gamma$ in $H(S)([q])$
w.r.t. the inclusion map $[q]\cong Im(f\circ \alpha)\rightarrow
[p]$. Using this description, one can see that for any morphism
$h:S\rightarrow T$ in $(\widetilde{\Delta})^o(Ens)$, the image
$H(h):H(S)\rightarrow H(T)$ is a non-degenerate morphism between
non-degenerate simplicial sets.

To conclude, we construct a quasi-inverse $I$ for
$H:(\widetilde{\Delta})^o(Ens)\rightarrow \Delta^o(Ens)_{nd}$. For
any non-degenerate simplicial set $\Sigma$ and any $n\geq 0$, we
define $I(\Sigma)([n])$ as the set of non-degenerate $n$-simplices
of $\Sigma$. By the definitions of non-degenerate simplicial set
and non-degenerate morphism, we can make $I$ into a functor in the
obvious way. It is easy to see that $I$ is quasi-inverse to $H$.
\end{proof}

\begin{prop}\label{compar}
For any strictly semi-stable $\widetilde{K}$-variety $X$, there
exists a canonical isomorphism of simplicial sets $\alpha:(H\circ
F)(C(X))\rightarrow \Delta(X)$. In particular, $\Delta(X)$ is
non-degenerate. Moreover, there exists a canonical homeomorphism
$|\Delta(X)|\rightarrow |C(X)|$.
\end{prop}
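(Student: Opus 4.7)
The plan is to construct the simplicial-set isomorphism $\alpha$ first, deduce the non-degeneracy of $\Delta(X)$ as a formal consequence of Lemma \ref{adjoint}, and finally upgrade to a homeomorphism of realizations by matching CW-structures on both sides. By the adjunction $H\dashv G$ from Lemma \ref{adjoint}, producing $\alpha:H(F(C(X)))\to \Delta(X)$ is equivalent to producing a morphism $F(C(X))\to G(\Delta(X))$ in $(\widetilde{\Delta})^o(Ens)$. After unwinding Berkovich's description of $C(X)$ from \cite{berk-contract}, an $n$-simplex of $C(X)$ is a triple $(J,x,\sigma)$ with $J\subset Irr(X)$, $x\in Str(X_J)$ and $\sigma:[n]\twoheadrightarrow J$, while Lemma \ref{simpset} describes the $n$-simplices of $\Delta(X)\cong D(X)$ as pairs $(x',\sigma':[n]\twoheadrightarrow \Psi(x'))$. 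Since $\Psi(x)=J$ whenever $x\in Str(X_J)$, the tautological assignment $(J,x,\sigma)\mapsto (x,\sigma)$ is manifestly natural with respect to injective maps of $[n]$, and hence defines the desired morphism; let $\alpha$ be the corresponding adjoint.

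To check that $\alpha$ is an isomorphism I would invoke Lemma \ref{adjoint} again: the equivalence $I:\Delta^o(Ens)_{nd}\to (\widetilde{\Delta})^o(Ens)$ restricts a non-degenerate simplicial set to its presheaf of non-degenerate simplices on $\widetilde{\Delta}$, so it suffices to show that $\alpha$ induces a bijection on non-degenerate $n$-simplices. By the explicit description given in the proof of Lemma \ref{adjoint}, the non-degenerate $n$-simplices of $H(F(C(X)))$ are $F(C(X))([n])$ modulo automorphisms of $[n]$, and unwinding this yields exactly $\bigsqcup_{|J|=n+1} Str(X_J)$; Lemma \ref{simpset} gives the same set as the non-degenerate $n$-simplices of $\Delta(X)$. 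The non-degeneracy of $\Delta(X)$ then follows automatically from its lying in the essential image of $H$.

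For the homeomorphism $|\Delta(X)|\to|C(X)|$ I would compare CW-structures. Both realizations carry a natural cell decomposition indexed by $Str(X)$, the cell attached to $x$ being the quotient of the open simplex $(\Delta_{|\Psi(x)|-1})^o$ by the natural action of bijections $[|\Psi(x)|-1]\cong \Psi(x)$, with attaching maps governed by the incidence relations $X_J\supset X_{J'}$ for $J\subset J'\subset Irr(X)$. The map induced by $\alpha$ on realizations is the identity on this cell indexing. The main obstacle, and I think the only genuinely non-routine step, is verifying that the identifications imposed on $|C(X)|$ by the morphisms of $\Lambda$ beyond $\widetilde{\Delta}$ coincide exactly with the identifications produced on our side by the degeneracies that $H$ freely adjoins to $F(C(X))$ — essentially, that both constructions present the realization as the same quotient of $\bigsqcup_n F(C(X))([n])\times \Delta_n$. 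Once this is verified, $|\alpha|$ is a continuous bijection of CW-complexes with the same cellular data, hence a homeomorphism.
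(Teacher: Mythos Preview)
Your unwinding of $F(C(X))$ is not quite right: according to the paper (and Berkovich's construction), an element of $F(C(X))([n])$ is a pair $(x,\alpha)$ with $x\in Str(X)$ and $\alpha$ a \emph{bijection} $[n]\to\Psi(x)$, not a surjection. This matters, because once you have the correct description, the paper's argument is a one-liner: the explicit formula for $H$ in the proof of Lemma~\ref{adjoint} (the quotient $\mathcal{S}_n(S)/\!\sim$) shows immediately that $(H\circ F)(C(X))([n])$ consists of pairs $(x,\beta)$ with $\beta$ a surjection $[n]\to\Psi(x)$, which is exactly $D(X)([n])$ by Lemma~\ref{simpset}. No adjunction gymnastics, no separate check on non-degenerate simplices --- the isomorphism is literally the identity on these descriptions.

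Your route via the adjunction $H\dashv G$ is not wrong in spirit, but it has a circularity you should fix: to invoke the equivalence $I:\Delta^o(Ens)_{nd}\to(\widetilde{\Delta})^o(Ens)$ and reduce to a comparison of non-degenerate simplices, you need to know in advance that $\Delta(X)$ is non-degenerate and that $\alpha$ is a non-degenerate morphism. You defer non-degeneracy of $\Delta(X)$ to after the isomorphism is established, which is backwards. (It is of course easy to verify non-degeneracy of $\Delta(X)$ directly from the description in Lemma~\ref{simpset}, but you would need to say so.) For the homeomorphism of realizations, the paper does not compare CW structures as you propose; it simply observes that $|\Delta(X)|\cong|(H\circ F)(C(X))|$ by the simplicial-set isomorphism just established, and that $|(H\circ F)(C(X))|\cong|C(X)|$ is clear from Berkovich's construction of $|C(X)|$ --- so what you flag as ``the only genuinely non-routine step'' is in fact absorbed into the reference.
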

\begin{proof}
Recall that, for any point $x$ of $Str(X)$, we denote by $\Psi(x)$
the set of irreducible components of $X$ containing $x$. Then, by
definition, for any integer $n\geq 0$, $F(C(X))([n])$ is the set
of pairs $(x,\alpha)$ with $x\in Str(X)$ and $\alpha$ a bijection
$[n]\rightarrow \Psi(x)$. By the construction of the functor $H$
in the proof of Lemma \ref{adjoint}, we see that $(H\circ
F)(C(X))([n])$ is the set of pairs $(x,\beta)$ with $x\in Str(X)$
and $\beta$ a surjection $[n]\rightarrow \Psi(x)$. Now the
existence of the canonical isomorphism  follows from Lemma
\ref{simpset}. The existence of a canonical homeomorphism
$|(H\circ F)(C(X))|\rightarrow |C(X)|$ is clear from Berkovich'
construction of $|C(X)|$.
\end{proof}

\subsection{The skeleton of a strictly semi-stable formal
scheme}\label{berk} Let $(L,|\cdot|_L)$ be an arbitrary
non-archimedean field, with ring of integers $L^o$ and residue
field $\widetilde{L}$ (we do not exclude the trivial absolute
value). We recall a particular case of Berkovich' definition of
the skeleton $S(\X)$ of a poly-stable formal $L^o$-scheme $\X$,
and his construction of a strong deformation retract of $\X_\eta$
onto $S(\X)$ (see \cite[\S\,5]{berk-contract}). We establish some
basic properties for use in the following sections.

\textit{Case 1.} Suppose that $\X=\mathrm{Spf}\,A$ with
$A=L^o\{x_0,\ldots,x_m\}/(x_0\cdot \ldots\cdot x_p-\alpha)$ for
some $p\leq m$, and with $\alpha\in L^o$, $|\alpha|_L<1$. Each
element of $A_L:=A\otimes_{L^o}L$ has a unique representant in the
set
$$D=\{\sum_{i\in \N^{[m]}}a_i x^i\in L\{x_0,\ldots,x_m\}\,|\,a_i=0\mbox{ if
}\min\{i_0,\ldots,i_p\}>0\}$$  i.e. the natural map $D\rightarrow
A_L$ is a bijection (and even an isometry if we endow $A_L$ with
the quotient norm w.r.t. the given presentation
$L\{x_0,\ldots,x_m\}\rightarrow A_L$). Put
$$S=\{r\in [0,1]^{[p]}\,|\,r_0\cdot \ldots\cdot r_p=|\alpha|_L\}$$
and consider the map $\theta:S\rightarrow \X_\eta$ mapping $r$ to
the element $\theta(r)$ of $\X_\eta=\mathscr{M}(A_L)$ defined by
$$\theta(r):D\rightarrow \R_+:\sum_{i}a_ix^i\mapsto \max_{i\in
\N^{[m]}}\{|a_i|_Lr^i\}$$ where $r^i=r_0^{i_0}.\ldots.r_p^{i_p}$
with the convention that $0^0=1$. The map $\theta$ is a
homeomorphism onto its image $\theta(S)$, which is by definition
the skeleton $S(\X)$ of $\X$, and $\theta$ is right inverse to the
map $\phi:\X_\eta\rightarrow S$ mapping a point $z$ of $\X_\eta$
to the tuple $(|x_0(z)|,\ldots,|x_p(z)|)$. If we put
$\tau_{\X}=\theta\circ \phi$, then Berkovich constructed a natural
strong deformation retract
$\Phi_{\X}:\X_\eta\times[0,1]\rightarrow \X_\eta$ with
$\Phi_{\X}(\cdot,1)=\tau_{\X}$.

\textit{Case 2.} Now we consider the case where $\X$ admits an
\'etale map $h:\X\rightarrow \mY=\mathrm{Spf}\,A$, with $A$ as
above. Then the skeleton $S(\X)$ is the inverse image of $S(\mY)$
under $h_\eta$, and it does not depend on the choice of the map
$h$. 
Moreover, Berkovich
describes the map $\Phi_{\mY}$ in terms of a certain torus action,
which lifts uniquely to $\X_\eta$; in this way, he defines a
natural strong deformation retract
$\Phi_{\X}:\X_\eta\times[0,1]\rightarrow \X_\eta$ of $\X_\eta$
onto $S(\X)$, such that $\Phi_{\mY}(h_\eta(x),\rho)=h_\eta\circ
\Phi_{\X}(x,\rho)$ for each point $x$ of $\X_\eta$ and each
$\rho\in [0,1]$. This map $\Phi_{\X}$ does not depend on $h$.

\textit{Case 3.} Finally, if $\X$ is a $stft$ formal $L^o$-scheme
that can be covered by open formal subschemes with the property of
Case 2, then the construction of the skeleton $S(\X)$ and the
strong deformation retract $\Phi_{\X}$ are obtained by gluing the
constructions in the previous step. We put
$\tau_{\X}=\Phi_{\X}(\cdot,1):\X_\eta\rightarrow S(\X)$. In
particular, Case 3 applies to all strictly semi-stable formal
$K^o$-schemes.

The map $\Phi_{\X}$ has the following property: for any point $x$
of $\X_\eta$, we have $(sp_{\X}\circ\Phi_{\X})(x,\rho)=sp_{\X}(x)$
for $\rho\in [0,1[$, and $(sp_{\X}\circ\tau_{\X})(x)$ is the
generic point of the stratum of the strictly semi-stable
$\widetilde{L}$-variety $\X_s$ containing $sp_{\X}(x)$. In
particular, if $E$ is a strata subset of $\X_s$, then $\Phi_{\X}$
restricts to a strong deformation retract
$$sp_{\X}^{-1}(E)\times[0,1]\rightarrow sp_{\X}^{-1}(E)$$ onto
$S(\X)\cap sp_{\X}^{-1}(E)$.

 Berkovich constructed
a natural homeomorphism $S(\X)\cong |\Delta(\X_s)|$ (here we use
the canonical homeomorphism $|\Delta(\X_s)|\cong |C(\X_s)|$
established in Proposition \ref{compar}). If $E$ is a strata
subset of $\X_s$, then this homeomorphism identifies $S(\X)\cap
sp_{\X}^{-1}(E)$ with $|\Delta_E(\X_s)|$ (cf.
\cite[5.4]{berk-contract}).

We will give an explicit description of the composed map
$$\tau_{\X(r)}:\X(r)_\eta\rightarrow S(\X(r))\cong |\Delta(\X_s)|$$
if $\X$ is a strictly semi-stable formal $K^o$-scheme, and $r\in
[0,1[$.

 For $n\geq 0$ and $q\in [0,1[$, put
$$\Sigma^n_q=\{(u_0,\ldots,u_n)\in [0,1]^{[n]}\,|\,\prod_{i\in
[n]}u_i=q\}$$ In \cite[\S\,4]{berk-contract}, Berkovich constructs
a natural homeomorphism $\alpha:\Delta_n\rightarrow \Sigma^n_q$.
It identifies the face of $\Delta_n$ corresponding to a non-empty
subset $S$ of $[n]$, with the subspace of $\Sigma^n_q$ defined by
$u_i=1$ for $i\notin S$ (note that this subspace is homeomorphic
to $\Sigma^{|S|-1}_q$). If $w$ is a point of $\Delta_n$, we call
the tuple $\alpha(w)$ in $\Sigma^n_q$ the $q$-coloured coordinates
of $w$.

For any non-empty finite set $I$ and any $q\in [0,1[$, $\alpha$
induces a map
$$\alpha:\{u\in [0,1]^I\,|\,\sum_{i\in I}u(i)=1\}\rightarrow \{u\in [0,1]^I\,|\,\prod_{i\in
I}u(i)=q\}$$ by choosing a bijection $I\cong [n]$ for some $n\geq
0$; the map is independent of this choice.

\begin{definition}[$q$-coloured representation]
We fix a value $q\in [0,1[$.  Let $X$ be a strictly semi-stable
$\widetilde{K}$-variety, and let $z$ be a point of $|\Delta(X)|$
with barycentric representation $(x,v)$ (see Definition
\ref{bary}). We define the $q$-coloured coordinates of $z$ as the
image of $v$ under the map
$$\alpha:\{u\in [0,1]^{\Psi(x)}\,|\,\sum_{i\in I}u(i)=1\}\rightarrow \{u\in [0,1]^{\Psi(x)}\,|\,\prod_{i\in
I}u(i)=q\}$$ and we call $(x,\alpha(v))$ the $q$-coloured
representation of $z$.
\end{definition}
\begin{lemma}\label{tau}
Let $\X$ be a strictly semi-stable formal $K^o$-scheme, and fix a
value $r\in [0,1[$. Let $z_0$ be any point of the special fiber
$\X_s$, and let $x$ be the unique point of $Str(\X_s)$ such that
$z_0$ is contained in the stratum $S_x$.

For each element $C$ of $\Psi(x)$, we choose a generator $T_C$ of
the kernel of the natural morphism
$\mathcal{O}_{\X,z_0}\rightarrow \mathcal{O}_{C,z_0}$. Then the
image of a point $z\in sp^{-1}_{\X(r)}(z_0)$ under the retraction
$\tau_{\X(r)}:\X(r)_\eta\rightarrow S(\X(r))\cong |\Delta(\X_s)|$
is the point with $|\pi|_{K_r}$-coloured representation
$(x,(|T_i(z)|)_{i\in \Psi(x)})$.
\end{lemma}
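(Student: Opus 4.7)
The plan is to reduce the assertion to the explicit local model of Case 1 and then match the formula of $\theta$ against the definition of $q$-coloured coordinates. The retraction $\tau_{\X(r)}$ and the skeleton $S(\X(r))$ are constructed locally in the étale topology, so we first choose an étale chart through $z_0$: a morphism $h : \mU \to \mY = \Spf K^o\{x_0,\ldots,x_m\}/(x_0\cdots x_p-\pi)$ with $\mU$ an open formal subscheme of $\X$ containing $z_0$. By Case 2, $S(\mU(r)) = h_\eta^{-1}(S(\mY(r)))$ and $h_\eta\circ \tau_{\mU(r)} = \tau_{\mY(r)}\circ h_\eta$, while the homeomorphism $S(\X(r))\cong |\Delta(\X_s)|$ is compatible with $h$ (the strata of $\mU_s$ map to strata of $\mY_s$ of the same Ψ-type). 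So it suffices to prove the formula for $\mY$, and then transport it back via $h$.

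For $\mY$ itself, Case 1 gives the skeleton as $S = \{r\in[0,1]^{[p]}\mid r_0\cdots r_p=|\pi|_{K_r}\}$ embedded by $\theta$, and $\tau_{\mY(r)}(z)=\theta(\phi(z))$ with $\phi(z)=(|x_0(z)|,\ldots,|x_p(z)|)$. Via the identification $S(\mY(r))\cong |\Delta(\mY_s)|$, the skeleton $S$ corresponds exactly to the space of $|\pi|_{K_r}$-coloured coordinates on the top-dimensional simplex indexed by the irreducible components $C_0,\ldots,C_p$ of $\mY_s$, namely $\{x_i=0\}$ for $i=0,\ldots,p$. Thus $\tau_{\mY(r)}(z)$ has $|\pi|_{K_r}$-coloured representation on the cell indexed by $\Psi(x)$, with entries $|x_i(z)|$ for $i\in\Psi(x)$ and the remaining entries equal to $1$. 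The fact that $|x_j(z)|=1$ for $j\in\{0,\ldots,p\}\setminus \Psi(x)$ (when $z\in sp^{-1}_{\mY(r)}(z_0)$) is forced: such $x_j$ are units at $z_0$, so extend to units on the formal neighbourhood, hence have norm $1$ on the fiber. This puts the image on the correct face and is consistent with $\prod_{i\in\Psi(x)}|x_i(z)|=|\pi|_{K_r}$.

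Finally, return to $\X$. The generators $T_C$ of $\ker(\mathcal{O}_{\X,z_0}\to \mathcal{O}_{C,z_0})$ for $C\in\Psi_{\X_s}(x)$ correspond, under the étale map $h$, to the coordinate functions $x_{i(C)}$ on $\mY$ up to a unit in $\mathcal{O}_{\X,z_0}$: since $h$ is étale, the pullback $h^*$ identifies the ideals of the components of $\mY_s$ at $h(z_0)$ with those of the components of $\X_s$ at $z_0$, giving a bijection $\Psi_{\X_s}(x)\cong \Psi_{\mY_s}(h(x))$ and $h^*(x_{i(C)})=u_C\cdot T_C$ with $u_C$ a unit. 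Hence $|T_C(z)|=|x_{i(C)}(h_\eta(z))|$, and the computation for $\mY$ gives the claim for $\X$.

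The main obstacle is the bookkeeping in the last step: one must be careful that the bijection $\Psi_{\X_s}(x)\leftrightarrow \Psi_{\mY_s}(h(x))$ induced by the étale chart matches the identification of $S(\X(r))$ with the corresponding face of $|\Delta(\mY_s)|$ under the comparison $S(\X)\cong |\Delta(\X_s)|$ of Section \ref{berk}. This is a direct consequence of Berkovich's construction and Proposition \ref{compar}, but it is the one point where care is needed; everything else is a matter of unwinding the definitions of $\theta$, $\phi$ and of $q$-coloured coordinates.
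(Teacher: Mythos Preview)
Your proof is correct and follows essentially the same route as the paper: reduce to the standard model $\mY=\Spf K^o\{x_0,\ldots,x_m\}/(x_0\cdots x_p-\pi)$ via an \'etale chart, read off the formula from Case~1, and transport back using that $T_C$ and $h^*x_{i(C)}$ differ by a unit (hence have the same absolute value). The only cosmetic difference is that the paper first shrinks $\X$ around $z_0$ so that $x$ becomes the unique maximal stratum point and $h$ induces a bijection $Irr(\X_s)\cong Irr(\mY_s)$, which makes your ``bookkeeping obstacle'' disappear entirely (one then has $\Psi(x)=\{0,\ldots,p\}$ and there are no extra coordinates $x_j$ with $|x_j(z)|=1$ to worry about); your direct handling of those extra coordinates is equally valid.
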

Recall that $\pi$ is a generator of the maximal ideal of $K^o$.
\begin{proof}
If $\X$ is of the form $\mathrm{Spf}\,A$ with
$A=K^o\{x_0,\ldots,x_m\}/(x_0.\ldots.x_p-\pi)$, then this follows
immediately from the constructions in \cite[\S\,5]{berk-contract}.
So let us assume that $\X$ admits an \'etale morphism
$h:\X\rightarrow \mY=\mathrm{Spf}\,A$. Shrinking $\X$ around
$z_0$, we may assume that $x$ is the unique maximal element of
$Str(\X_s)$ (w.r.t. the partial order defined in Section
\ref{simpint}), and that $h$ induces a bijection $Irr(\X_s)\cong
Irr(\mY_s)$. In this case, $h$ induces isomorphisms
$\beta:\Delta(\X_s)\cong\Delta(\mY_s)=\Delta[p]$ and
$\gamma:S(\X(r))\cong S(\mY(r))$, by Step 6 in the proof of
Theorems 5.2-4 in \cite{berk-contract}.

Since $|\varphi(z)|=1$ for any unit $\varphi$ on $\X$, the value
$|T_i(z)|$ only depends on $i$ and $z$ and not on the choice of
the generator $T_i$. Hence, we might as well take $T_i=h^*x_i$ for
$i=0,\ldots,p$ (we used the bijection $Irr(\X_s)\cong Irr(\mY_s)$
to identify $\Psi(x)$ with $\{0,\ldots,p\}$). Therefore, it only
remains to observe that the diagram
$$\begin{CD}
|\Delta(\X_s)|@>>> S(\X(r))
\\ @V\beta VV @VV\gamma V
\\ |\Delta(\mY_s)| @>>> S(\mY(r))
\end{CD}$$
commutes, where the horizontal arrows are the natural
homeomorphisms constructed by Berkovich (in fact, this is the
definition of the upper horizontal arrow in Step 6 of Berkovich'
proof \cite[p.\,48]{berk-contract}).
\end{proof}

\subsection{Restriction to irreducible components}
Let $X$ be a strictly semi-stable $\widetilde{K}$-variety, and let
$E$ be a union of irreducible components of $X$. Of course, $E$
itself is again a strictly semi-stable $\widetilde{K}$-variety,
and hence, it defines an associated simplicial set $\Delta(E)$
with geometric realization $|\Delta(E)|$. In general,
$|\Delta(E)|$ is not homeomorphic to $|\Delta_E(X)|$. For
instance, when $X=\mathrm{Spec}\,\widetilde{K}[x,y]/(xy)$ and $E$
is the component $x=0$, then $|\Delta(E)|$ is a point while
$|\Delta_E(X)|$ is homeomorphic to the semi-open interval $[0,1[$.
However, it is clear from the definitions that the inclusion
$Str(E)\subset Str(X)$ induces an injective morphism of simplicial
sets $\Delta(E)\rightarrow \Delta(X)$ and a natural continuous
injection $|\Delta(E)|\rightarrow |\Delta_E(X)|$ which is a
homeomorphism onto its image.

Denote by $\mE$ the formal
completion of $X$ along $E$; this is a special formal
$\widetilde{K}$-scheme. The closed immersion of special formal
$\widetilde{K}$-schemes $h:E\rightarrow \mE$ induces a morphism of
$K_0$-analytic spaces $h_\eta:E_\eta\rightarrow \mE_\eta$.
\begin{prop}\label{deform}
If $X$ is a strictly semi-stable $\widetilde{K}$-variety, and $E$
is a union of irreducible components of $X$, then there exists a
strong deformation retract
$$\Phi^X_E:|\Delta_E(X)|\times[0,1]\rightarrow |\Delta_E(X)|$$ of
$|\Delta_E(X)|$ onto $|\Delta(E)|$ such that the diagram
$$\begin{CD}
E_\eta@>h_\eta >> \mE_\eta
\\@VV\tau_EV @VV\tau_{X}V
\\ |\Delta(E)|@<\Phi^X_E(\cdot,1)<< |\Delta_E(X)|
\end{CD}$$
commutes.
\end{prop}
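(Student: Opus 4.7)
The plan is to define $\Phi^X_E$ explicitly on each cell of $|\Delta_E(X)|$ in barycentric coordinates, verify continuity and the deformation-retract properties, and then establish the commutative diagram via Lemma~\ref{tau}. For each $y\in Str(X)\cap E$ partition $\Psi_X(y)=J_E(y)\sqcup J_{X\setminus E}(y)$ with $J_E(y):=\Psi_X(y)\cap Irr(E)$; since $y\in E$ the set $J_E(y)$ is non-empty. The local \'etale model $X\sim\{x_0\cdots x_p=0\}$, in which $E$ is a union of some of the coordinate hyperplanes $\{x_i=0\}$, shows that $\Psi_X(x)=\Psi_E(x)$ for every $x\in Str(E)$, so $|\Delta(E)|$ is a subcomplex of $|\Delta_E(X)|$ and the cells of $|\Delta_E(X)|\setminus|\Delta(E)|$ are exactly those with $J_{X\setminus E}(y)\neq\emptyset$.

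For a point with barycentric representation $(y,v)$, set $s:=\sum_{i\in J_E(y)}v(i)>0$ and define
\[
\Phi^X_E((y,v),t):=(y,v_t),\qquad v_t(i)=\frac{v(i)}{1-t(1-s)}\ (i\in J_E(y)),\ v_t(i)=\frac{(1-t)v(i)}{1-t(1-s)}\ (i\in J_{X\setminus E}(y)).
\]
Then $\sum_i v_t(i)=1$ for all $t$, and at $t=1$ the coordinates indexed by $J_{X\setminus E}(y)$ vanish; the resulting point lies on the face $J_E(y)$ of $\overline{C_y}$, which by Lemma~\ref{simpset} is the cell $C_{y_1}$ where $y_1\in Str(E)$ is the generic point of the irreducible component of $E_{J_E(y)}$ containing $y$, and hence lies in $|\Delta(E)|$. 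When $y'\leq y$ in $Str(X)\cap E$ one has $J_E(y')=J_E(y)\cap\Psi_X(y')$ and $s$ is unchanged along the common face, so the formulas agree and $\Phi^X_E$ is continuous on $|\Delta_E(X)|\times[0,1]$; it is the identity at $t=0$, maps into $|\Delta(E)|$ at $t=1$, and is stationary on $|\Delta(E)|$ because there $J_E(y)=\Psi_X(y)$ and $s=1$.

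To verify the diagram, fix $z\in E_\eta$ with $z_0:=sp_E(z)$ and let $x\in Str(E)$, $y\in Str(X)$ be the strata containing $z_0$; then $\Psi_E(x)=J_E(y)$ and $x$ coincides with the $y_1$ above. Choose local generators $\tilde T_i$ of the ideals of the components of $X$ at $z_0$, and take $T_i:=\tilde T_i|_E$ for $i\in J_E(y)$. Since $z\in E_\eta$ kills the ideal of $E$ in $\mathcal{O}_{X,z_0}$, one has $|T_i(z)|=|\tilde T_i(z)|$ for $i\in J_E(y)$ and $\prod_{i\in J_E(y)}|\tilde T_i(z)|=0$. Lemma~\ref{tau} (with $r=0$) then gives $\tau_X(h_\eta(z))=(y,(|\tilde T_i(z)|)_{i\in\Psi_X(y)})$ and $\tau_E(z)=(x,(|T_i(z)|)_{i\in J_E(y)})$ in $0$-coloured coordinates; translating to barycentric coordinates and applying the formula of the previous paragraph at $t=1$ identifies $\Phi^X_E(\tau_X(h_\eta(z)),1)$ with $\tau_E(z)$.

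The main obstacle is this last step: the renormalization factor $1-t(1-s)$ in the barycentric formula must match the change of $0$-coloured coordinates induced by the face inclusion $\Sigma^{|J_E(y)|-1}_0\hookrightarrow\Sigma^{|\Psi_X(y)|-1}_0$. The cleanest approach is to carry out the calculation first on a basic affinoid chart $\mathrm{Spf}\,K^o\{x_0,\ldots,x_m\}/(x_0\cdots x_p-\pi)$ with $\pi\neq 0$, where Berkovich's homeomorphism $\Delta_n\to\Sigma^n_q$ is the explicit exponential map $v\mapsto (q^{v_i})$ and all quantities are computable by hand, and then to deduce the trivial-valuation statement by continuity in $q\in[0,1[$.
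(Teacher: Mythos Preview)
Your construction of the retraction in barycentric coordinates is fine as a strong deformation retract of $|\Delta_E(X)|$ onto $|\Delta(E)|$, but the verification of the commutative diagram is genuinely incomplete, and the fix you sketch in the last paragraph does not work as stated.

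The issue is exactly the one you flag. Lemma~\ref{tau} gives $\tau_X(h_\eta(z))$ and $\tau_E(z)$ in $0$-coloured coordinates: the first has coordinates $(|\tilde T_i(z)|)_{i\in\Psi_X(y)}$, the second has $(|\tilde T_i(z)|)_{i\in J_E(y)}$. So in $0$-coloured coordinates the map you need at $t=1$ is simply ``restrict to $J_E(y)$ and set the remaining entries to $1$''. Your formula, however, is a \emph{linear} projection in barycentric coordinates: $v\mapsto (v_i/s)_{i\in J_E(y)}$. For these to agree you would need Berkovich's homeomorphism $\alpha:\Delta_n\to\Sigma^n_0$ to intertwine linear face-projection with coordinate restriction, and you neither prove this nor cite where it is proved. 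Your proposed route via the exponential map $v\mapsto(q^{v_i})$ for $q>0$ actually shows the opposite: under that map, $v\mapsto v/s$ corresponds to $u_i\mapsto u_i^{1/s}$, not to $u_i\mapsto u_i$, so the two projections disagree for $s\neq 1$. Moreover the whole $q>0$ detour is ill-posed here, since Proposition~\ref{deform} concerns $\widetilde K$-varieties with trivial valuation; there is no natural $q>0$ version of the diagram to deform from.

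The paper sidesteps the problem by defining the retraction directly in $0$-coloured coordinates and by removing the components of $Irr(X)\setminus Irr(E)$ one at a time. With a single extra component $a$, the retraction at the final time simply pushes the $a$-coordinate to $1$ while leaving the others fixed; this is manifestly the ``restrict to $J_E$'' map in $0$-coloured coordinates, so Lemma~\ref{tau} gives the diagram immediately, with no appeal to the explicit form of $\alpha$. If you want to salvage your approach, the cleanest repair is to abandon barycentric coordinates and write your retraction in $0$-coloured coordinates from the start (e.g.\ $u_i(t)=u_i$ for $i\in J_E(y)$ and $u_j(t)=u_j+t(1-u_j)$ for $j\notin J_E(y)$); then the diagram check is a one-line computation.
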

\begin{proof}
We may assume $E\neq X$. Choose a sequence
$$Irr(E)=I(0)\subset I(1)\subset \ldots \subset I(q+1)=Irr(X)$$ such that
$|I(i+1)|=|I(i)|+1$ for $i=0,\ldots,q$, and put
$E^{(i)}=\cup_{V\in I(i)}V$. Denote by $\mE^{(i)}$ the formal
completion of $E^{(i)}$ along $E$, and by
$g^{(i)}:\mE^{(i)}\rightarrow \mE^{(i+1)}$ the natural closed
immersion. It suffices to construct a strong deformation retract
$\Phi^{X}_{E^{(q)}}$ of $|\Delta_{E}(X)|$ onto
$|\Delta_E(E^{(q)})|$ such that the diagram
\begin{equation}\label{diag}\begin{CD} \mE^{(q)}_\eta@>g^{(q)}_\eta >> \mE_\eta
\\@VV\tau_{E^{(q)}}V @VV\tau_{X}V
\\ |\Delta_E(E^{(q)})|@<\Phi^X_{E^{(q)}}(.,1)<< |\Delta_E(X)|
\end{CD}\end{equation}
commutes: iterating the construction, we get strong deformation
retracts $\Phi^{E^{(i+1)}}_{E^{(i)}}$ for $i=0,\ldots,q$, and
these can be glued to obtain $\Phi^X_E$.

To simplify notation, we will denote $E^{(q)}$ by $Y$, $\mE^{(q)}$
by $\mY$, and $g^{(q)}$ by $g$. Denote the unique element in
$Irr(X)\setminus Irr(Y)$ by $a$.

For any point $z$ of $|\Delta(X)|$, we will denote its
$0$-coloured representation by $(x_z,u_z)$.
The point $z$ belongs to $|\Delta_E(X)|$ (resp. $|\Delta_E(Y)|$)
iff $\Psi_X(x_z)\cap Irr(E)\neq \emptyset$ (resp. iff
$\Psi_X(x_z)\cap Irr(E)\neq \emptyset$ and $\Psi_X(x_z)\subset
Irr(Y)$).

For $\emptyset\neq J\subset Irr(X)$, $x$ a generic point of $X_J$,
and $u\in [0,1]^J$, we define the support $Supp(u)$ of $u$ as the
set of indices $i\in J$ with $u(i)\neq 1$. We identify $(x,u)$
with the point $z\in |\Delta(X)|$ with $0$-coloured representation
$(x_z,u_z)$, were $u_z$ is the restriction of $u$ to $Supp(u)$ and
$x_z$ is the unique generic point of $X_{Supp(u)}$ such that $x$
belongs to the Zariski closure of $\{x_z\}$ in $X$.

 Consider the function 
$$\Phi^X_{Y}:|\Delta_E(X)|\times [0,1]\rightarrow
|\Delta_E(X)|:(z=(x_z,u_z),\rho)\mapsto
(x_z,u(\rho))=\Phi^X_{Y}(z,\rho)$$ where $u(\rho)\in
[0,1]^{\Psi_X(x_z)}$ depends on $x_z,\,u_z$ and $\rho$, and is
defined in the following way.

\textit{Case 1.} If $a\notin \Psi_X(x_z)$, then $u(\rho)=u_z$ for
each value of $\rho$.

\textit{Case 2.} Now assume that $a\in \Psi_X(x_z)$. If there
exists an element $b\in \Psi_Y(x_z)=\Psi_X(x_z)\cap Irr(Y)$ with
$u_z(b)=0$, then we put
$$ u(\rho)(i)=\left\{\begin{array}{l}u_z(i) \mbox{ for } i\in \Psi_Y(x_z) \mbox{ and } \rho\in
[0,1] \\u_z(a)\mbox{ for }i=a\mbox{ and }\rho \in [0,1/2]
\\u_z(a)+(1-u_z(a))(2\rho-1)\mbox{ for }i=a\mbox{ and }\rho \in [1/2,1]
\end{array}\right.$$
\textit{Case 3.} Finally, suppose that $u_z(i)\neq 0$ for all
$i\in \Psi_Y(x_z)$. Then necessarily $u_z(a)=0$. We put
$u(\rho)(a)=0$ for $\rho\in [0,1/2]$, and
$$u(\rho)(i)=u_z(i)-2\rho\cdot\min_{j\in \Psi_X(x_z)}\left(u_z(j)\right) $$
for $i\in \Psi_Y(x_z)$ and $\rho\in [0,1/2]$. Then there exists an
element $b\in \Psi_Y(x_z)$ with $u(1/2)(b)=0$, so that the
definition in Case 2 applies to $v:=u(1/2)\in
[0,1[^{\Psi_X(x_z)}$, and we put $u(\rho)=v(\rho)$ for $\rho\in
[1/2,1]$.

It is easily seen that the map $\Phi^X_{Y}$ is continuous, and
defines a strong deformation retract of $|\Delta_E(X)|$ onto
$|\Delta_E(Y)|$. Let us check the commutativity of diagram
(\ref{diag}). Let $z$ be any point of $\mY_\eta$, and put
$sp_{\mY}(z)=sp_X(z)=z_0$.
For each element $C$ of $\Psi_X(z_0)$, we choose a generator $T_C$
of the kernel of the natural morphism
$\mathcal{O}_{X,z_0}\rightarrow \mathcal{O}_{C,z_0}$. Denote by
$y$ and $x$ the points of $Str(Y)$, resp. $Str(X)$ such that $z_0$
belongs to the corresponding stratum of $Y$, resp. $X$. Then by
Lemma \ref{tau} we have to show that $\Phi^X_{Y}(\cdot,1)$ takes
the point $z'=(\tau_X\circ g_\eta)(z)$ of $|\Delta_E(X)|$ with
$0$-coloured representation $(x,(|T_i(z)|)_{i\in \Psi_X(z_0)})$ to
the point $\tau_{Y}(z)$ of $|\Delta_E(Y)|$ with $0$-coloured
representation $(y,(|T_i(z)|)_{i\in \Psi_{Y}(z_0)})$.

If $\Psi_X(z_0)=\Psi_Y(z_0)$ this is obvious, so assume that
$\Psi_X(z_0)=\Psi_Y(z_0)\sqcup \{a\}$. Since $z$ is an element of
$\mY_\eta$, there exists an element $i$ of $\Psi_{Y}(z_0)$ with
$|T_i(z)|=0$. By Case 2 of the definition, $\Phi^X_Y(z',1)$ is
given by the couple $(x,v)$ with $v\in [0,1]^{\Psi_X(z_0)}$,
$v(i)=|T_i(z)|$ for $i\neq a$, and $v(a)=1$. By the
identifications we made, this is exactly the point $\tau_Y(z)$
(since $y$ is the unique generic point of $X_{\Psi_{Y}(z_0)}$ such
that $x$ belongs to the Zariski closure of $\{y\}$ in $Y$).
%
\end{proof}
\begin{cor}\label{ssvar}
The map $h_\eta:E_\eta\rightarrow \mE_\eta$ is a homotopy
equivalence.
\end{cor}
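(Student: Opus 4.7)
The plan is to deduce the corollary essentially formally from the commutative square of Proposition \ref{deform}, by combining the homotopy equivalences provided by Berkovich's strong deformation retracts with the two-out-of-three property for homotopy equivalences.

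First I would check that the three maps other than $h_\eta$ in that square are all homotopy equivalences. The left vertical map $\tau_E:E_\eta\rightarrow |\Delta(E)|$ is a strong deformation retract by Berkovich's construction recalled in Section \ref{berk} (Case 3). Using the identification $\mE_\eta\cong sp_X^{-1}(E)$ of \cite[0.2.7]{bert}, the right vertical map $\tau_X:\mE_\eta\rightarrow |\Delta_E(X)|$ is the restriction of the Berkovich retraction to the preimage of the strata subset $E$, and is therefore also a strong deformation retract as recalled at the end of Section \ref{berk}. Finally, the bottom horizontal map $\Phi^X_E(\cdot,1):|\Delta_E(X)|\rightarrow |\Delta(E)|$ is a strong deformation retract by the very statement of Proposition \ref{deform}. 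Since any strong deformation retract is a homotopy equivalence (with the inclusion as homotopy inverse), these three maps are homotopy equivalences.

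Now the commutativity of the square gives
\begin{equation*}
\Phi^X_E(\cdot,1)\circ\tau_X\circ h_\eta=\tau_E.
\end{equation*}
Composing with a homotopy inverse of $\Phi^X_E(\cdot,1)$ (one may take the natural inclusion $|\Delta(E)|\hookrightarrow |\Delta_E(X)|$), the composite $\tau_X\circ h_\eta$ is homotopic to a composition of two homotopy equivalences, hence is itself a homotopy equivalence. Applying the two-out-of-three property to $\tau_X\circ h_\eta$, together with the fact that $\tau_X:\mE_\eta\rightarrow |\Delta_E(X)|$ is a homotopy equivalence, yields that $h_\eta$ is a homotopy equivalence, as required. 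The corollary is therefore a formal consequence of Proposition \ref{deform}; the real work has already been carried out in constructing the retraction $\Phi^X_E$ and checking the commutativity of the square.
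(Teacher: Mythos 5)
Your proof is correct, and it succeeds with a slightly more economical argument than the paper's. Both you and the paper start from the commutative square of Proposition \ref{deform} together with the observation that $\tau_E$, $\tau_X$, and $\Phi^X_E(\cdot,1)$ are homotopy equivalences (being strong deformation retracts). The paper, however, goes on to construct an explicit homotopy inverse for $h_\eta$, namely $\sigma_E\circ \Phi^X_E(\cdot,1)\circ\tau_X$, and to verify that this works it invokes a second commutative square
$$\begin{CD}
|\Delta(E)|@>i>> |\Delta_E(X)|
\\ @V\sigma_E VV @VV\sigma_X V
\\ E_\eta @>h_\eta>> \mE_\eta
\end{CD}$$
relating $h_\eta$ to the inclusions of the skeletons. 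You instead apply the two-out-of-three property for homotopy equivalences twice to the single identity $\Phi^X_E(\cdot,1)\circ\tau_X\circ h_\eta=\tau_E$, which eliminates the need for this second diagram (and hence the need to verify its commutativity from the description of the skeleton). What you lose is the explicit description of the homotopy inverse of $h_\eta$; what you gain is a shorter derivation that relies only on Proposition \ref{deform} itself. Both are standard and valid.
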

\begin{proof}
By Proposition \ref{deform}, $\Phi^X_E(\cdot,1)$ is a homotopy
equivalence with as homotopy inverse the natural embedding
$i:|\Delta(E)|\rightarrow |\Delta_E(X)|$. Moreover, $\tau_E$ and
$\tau_X$ are homotopy equivalences with as homotopy inverses the
natural embeddings $\sigma_E:|\Delta(E)|\rightarrow E_\eta$, resp.
$\sigma_X:|\Delta_E(X)|\rightarrow \mE_\eta$ (we used Berkovich'
natural homeomorphisms to identify $|\Delta(E)|$ with the skeleton
$S(E)$ and $|\Delta_E(X)|$ with $S(X)\cap sp_{X}^{-1}(E)$). Now
the fact that $h_\eta$ is a homotopy equivalence follows from the
commutativity of the diagram in the statement of Proposition
\ref{deform}, and the commutativity of the diagram
$$\begin{CD}
|\Delta(E)|@>i>> |\Delta_E(X)|
\\ @V\sigma_E VV @VV\sigma_X V
\\ E_\eta @>h_\eta>> \mE_\eta
\end{CD}$$ which follows easily from the description of the
skeleton in Section \ref{berk}.
\end{proof}

\section{Homotopy type of the analytic Milnor
fiber}\label{sec-hom} Let $k$ be any field, and put $R=k[[t]]$ and
$K=k((t))$, endowed with the $t$-adic valuation (so $R=K^o$). For
each $0<r<1$, we denote by $|\cdot|_r$ the $t$-adic absolute value
on $K$ with $|t|_r=r$. We fix an algebraic closure $K^a$ of $K$.


We endow $k$ with its trivial absolute value $|\cdot|_0$, and we
put $\mR:=\mathrm{Spf}\,R$. Moreover, we endow $k[t]$ with the
trivial Banach norm (this norm coincides with the Gauss norm if we
view $k[t]$ as the algebra of convergent power series $k\{t\}$).
The formal scheme $\mR$ is a special formal $k$-scheme in the
sense of \cite{berk-vanish2}. We denote its generic fiber by
$\mR_\eta$; it coincides with the open unit disc $D_0(1)=\{x\in
\mathscr{M}(k[t])|\ |t(x)|<1\}$.

The following result was stated in Step 3 of the proof of
\cite[Thm\,4.1]{berk-limit}, without proof. We include the
elementary proof for the reader's convenience.
\begin{lemma}\label{dischomeo}
The natural map
$$\Psi:D_0(1)\rightarrow [0,1[~:x\mapsto |t(x)|$$ is a
homeomorphism. If we put $p_r:=\Psi^{-1}(r)$ for $0\leq r<1$, then
the residue field $\mathscr{H}(p_r)$ is $K_0=(k,|\cdot|_0)$ for
$r=0$, and $K_r=(K,|\cdot|_r)$ for $0<r<1$.
\end{lemma}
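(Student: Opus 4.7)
The plan is to exhibit an explicit inverse to $\Psi$ and verify both maps are continuous. The continuity of $\Psi$ is immediate from the definition of the topology on $\mathscr{M}(k[t])$, since evaluation at $t$ is continuous. For the inverse, I would construct candidate preimages directly: for $r=0$, let $p_0$ be the seminorm $\sum_i a_i t^i \mapsto |a_0|_0$, whose kernel is the ideal $(t)$, so that $\mathscr{H}(p_0)$ is canonically $(k,|\cdot|_0)=K_0$. For $0<r<1$, let $p_r$ be the Gauss-type seminorm $\sum_i a_i t^i \mapsto \max_i |a_i|_0\, r^i = r^{\mathrm{ord}_t(\cdot)}$; since $\mathrm{ord}_t$ is a valuation on $k[t]$, $p_r$ is multiplicative and non-archimedean, bounded by the trivial Banach norm, with $|t(p_r)|=r$. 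The extension of $p_r$ to $k(t)$ is the $t$-adic absolute value normalized so that $|t|=r$, and the completion of $k(t)$ with respect to it is precisely $(K,|\cdot|_r)=K_r$.

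The main work is the injectivity of $\Psi$, i.e. showing $p_r$ is the unique preimage of $r$. Let $|\cdot|$ be any bounded multiplicative seminorm on $k[t]$ with $|t|=r\in[0,1[$. Boundedness by the trivial Banach norm gives $|a|\leq 1$ for all $a\in k$; applying multiplicativity to the relation $a\cdot a^{-1}=1$ then upgrades this to $|a|=1$ for every $a\in k^*$. For a nonzero $f\in k[t]$ with $n=\mathrm{ord}_t(f)$, write $f=t^n g$ with $g(0)\in k^*$ and $g=g(0)+t\tilde{g}$ for some $\tilde g=\sum_{j\geq 0}c_j t^j\in k[t]$. The ultrametric inequality combined with $|c_j|\leq 1$ and $r^j\leq 1$ yields
$$|t\tilde{g}|\leq r\cdot\max_{j\geq 0}|c_j|r^j\leq r<1=|g(0)|,$$
so the strict non-archimedean triangle inequality forces $|g|=|g(0)|=1$ and hence $|f|=r^n=p_r(f)$. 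Therefore $|\cdot|=p_r$.

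Finally, the inverse $r\mapsto p_r$ is continuous: by definition of the Berkovich topology on $\mathscr{M}(k[t])$, this reduces to the continuity of $r\mapsto |f(p_r)|=r^{\mathrm{ord}_t(f)}$ for each fixed $f\in k[t]$, which is evident. The only delicate point in the whole argument is the uniqueness step: the strict inequality $r<1=|g(0)|$ is essential to invoke the sharpened non-archimedean triangle inequality, and this is precisely where the hypothesis that we are in the \emph{open} unit disc enters the proof.
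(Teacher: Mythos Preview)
Your proof is correct and follows essentially the same route as the paper's: both construct the explicit inverse $r\mapsto p_r$, verify $\Psi\circ\Psi^{-1}=\mathrm{id}$, establish uniqueness by showing any bounded multiplicative seminorm with $|t|=r$ agrees with $p_r$, and check continuity of $\Psi^{-1}$ by reducing to continuity of $r\mapsto p_r(f)$ for each $f$. The one point worth flagging is that you freely use the ultrametric inequality (and its strict form) for an arbitrary point of $\mathscr{M}(k[t])$. In Berkovich's definition a point of the spectrum is only a bounded multiplicative seminorm satisfying the \emph{ordinary} triangle inequality, and the paper pauses to verify the non-archimedean property via the classical trick
\[
|(f+g)^n(x)|\le \sum_{i=0}^n\Big|\binom{n}{i}\Big|_0\,|f(x)|^i|g(x)|^{n-i}\le (n+1)\max\{|f(x)|,|g(x)|\}^n,
\]
taking $n$-th roots. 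This is a standard lemma (it follows once you know $|m|\le 1$ for every integer $m$, which your boundedness step already gives), but it is genuine content: your bound $|t\tilde g|\le r\cdot\max_j|c_j|r^j$ and the subsequent ``equality when norms differ'' step both rely on it.
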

\begin{proof}
The map $\Psi$ is obviously continuous. Its inverse is
$$\Psi^{-1}:[0,1[\,\rightarrow D_0(1):r\mapsto p_r$$
where $p_r$ is bounded multiplicative semi-norm in
$\mathscr{M}(k[t])$ defined by $p_r(\sum_{i=0}^{n}a_i
t^i):=\max_{i}|a_i|_0 r^i$ (with the convention that $0^0=1$).

Indeed: it is clear that $\Psi\circ \Psi^{-1}$ is the identity.
Now we show that $\Psi^{-1}$ is also left inverse to $\Psi$.
Choose $x$ in $D_0(1)$ and put $r=|t(x)|$.
A classical trick shows that the residue field $\mathscr{H}(x)$ of
$x$ is ultrametric: for $f,g$ in $k[t]$, and $n\geq 0$, we have
\begin{eqnarray*}|(f+g)^n(x)|=|(\sum_{i=0}^n \binom{n}{i}f^ig^{n-i})(x)|\leq
\sum_{i=0}^n |\binom{n}{i}|_0\, |f(x)|^i|g(x)|^{n-i}
\\ \leq (n+1)(\max\{\,|f(x)|,|g(x)|\,\})^n\end{eqnarray*} and taking
$n$-th roots and sending $n$ to $\infty$, we see that
$|(f+g)(x)|\leq \max\{\,|f(x)|,|g(x)|\,\}$.

If $r=0$, then the ultrametric property implies $x=p_0$, so we may
assume $0<r<1$. In this case, $|at^i(x)|\neq |a't^j(x)|$ for
$a,a'\in k^{\ast}$ and $i\neq j$, so we get
$|(\sum_{i=0}^{n}a_it^i)(x)|=\max_{i}|a_i|_0 r^i$.

It remains to prove that $\Psi^{-1}$ is continuous. By definition
of the spectral topology, it suffices to show that
$[0,1[\,\rightarrow \R_+:r\mapsto |f(p_r)|=p_r(f)$ is continuous
for each $f\in k[t]$. This, however, is clear.

Finally, we determine the residue fields of the points $p_r$.
Since $|\sum_{i=0}^na_i t^i(x)|=|a_0|_0$, we see that
$\mathscr{H}(p_0)=(k,|\cdot|_0)$. For $r>1$, no element of
$k[t]\setminus\{0\}$ vanishes in $p_r$, so $\mathscr{H}(p_r)$ is
the completion of $k(t)$ w.r.t. $p_r$; this is exactly
$(K,|\cdot|_r)$.
\end{proof}

If $\X$ is a special formal $R$-scheme, then we can also consider
$\X$ as a special formal $k$-scheme via the composition
$\X\rightarrow \mR\rightarrow \mathrm{Spec}\,k$. We denote this
object by $\X^k$. This yields a forgetful functor
$$(SpF/R)\rightarrow (SpF/k):\X\mapsto \X^k$$ from the category
$(SpF/R)$ of special formal $R$-schemes to the category $(SpF/k)$
of special formal $k$-schemes.
 We can associate to $\X^k$ its generic fiber
$\X^k_\eta$ (a $K_0$-analytic space), and there is a natural
specialization map $sp_{\X^k}:\X^k_\eta\rightarrow \X_0$.

The map of special formal $k$-schemes $h:\X^k\rightarrow \mR$
induces a map of $K_0$-analytic spaces
$h_\eta:\X^k_\eta\rightarrow \mR_\eta$. Its fibers can be
described as follows.

\begin{lemma}
For $0\leq r<1$, there is a canonical isomorphism $$\X(r)\cong
\X^k\widehat{\times}_{\mR}\mathrm{Spf}\,\mathcal{H}(p_r)^o$$ and
the fiber of $h_\eta$ over $p_r$ is canonically isomorphic to the
$K_r$-analytic space $\X(r)_\eta$.
\end{lemma}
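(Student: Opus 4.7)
The plan is to reduce the statement to the compatibility of the generic fibre functor with base change in the category of special formal schemes, after handling the trivially valued case $r=0$ separately from the case $0<r<1$.

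For the first isomorphism I would invoke Lemma~\ref{dischomeo}. When $0<r<1$, $\mathscr{H}(p_r)=K_r$ and hence $\mathscr{H}(p_r)^o=K_r^o=R$ as a ring, the only difference with $K^o$ being the chosen absolute value; the morphism $\mathrm{Spf}\,\mathscr{H}(p_r)^o\to \mR$ is then the identity on the underlying formal scheme, so
$$\X^k\widehat{\times}_{\mR}\mathrm{Spf}\,\mathscr{H}(p_r)^o$$
is canonically isomorphic to $\X$ regarded as a special formal $K_r^o$-scheme, which is by definition $\X(r)$. When $r=0$, $\mathscr{H}(p_0)=k$ with trivial absolute value, $\mathscr{H}(p_0)^o=k$, and $\mathrm{Spec}\,k\to \mR$ corresponds to the projection $R\to R/(t)=k$, so the base change is $\X\times_R k=\X_s=\X(0)$, again matching the convention fixed in Subsection~\ref{subsec-strictsemi}.

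For the second assertion I would invoke the fact that on special formal schemes the generic fibre commutes with base change: for any point $p$ of $\mR_\eta$ the fibre of $h_\eta$ over $p$ is by definition $\X^k_\eta\widehat{\times}_{\mR_\eta}\mathscr{M}(\mathscr{H}(p))$, and this coincides canonically with the generic fibre of $\X^k\widehat{\times}_{\mR}\mathrm{Spf}\,\mathscr{H}(p)^o$ in the category of $\mathscr{H}(p)$-analytic spaces (cf.\ Berkovich \cite{berk-vanish2}, and Thuillier in the trivially valued case). Applied at $p=p_r$ and combined with the first step, this yields the required identification of the fibre with $\X(r)_\eta$.

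The only real obstacle is bookkeeping in the case $r=0$: there $\mathrm{Spf}\,\mathscr{H}(p_0)^o$ is simply $\mathrm{Spec}\,k$, the base change is the ordinary special fibre $\X_s$, and one must recognise its generic fibre, taken in the category of $K_0$-analytic spaces with $K_0=(k,|\cdot|_0)$, as exactly $\X(0)_\eta$. Once this identification is made, the two cases $r=0$ and $0<r<1$ are handled uniformly, and the canonical nature of all constructions makes the isomorphisms of the statement functorial in $\X$.
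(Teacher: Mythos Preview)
Your argument is correct and close in spirit to the paper's, but the execution differs. For the first isomorphism you argue, cleanly, that $\mathscr{H}(p_r)^o$ is $R$ (for $r>0$) or $k$ (for $r=0$) as a ring, so the fibre product is tautologically $\X(r)$; the paper does not isolate this step explicitly. For the second assertion you invoke as a black box the compatibility of the generic-fibre functor with base change along $\mathrm{Spf}\,\mathscr{H}(p_r)^o\to\mR$, citing \cite{berk-vanish2}. The paper instead unfolds that very compatibility: it reduces (via the construction of the generic fibre in \cite[\S\,1]{berk-vanish2}) to the case $\X=\mathrm{Spf}\,R[[x_1,\ldots,x_m]]\{y_1,\ldots,y_n\}$, describes $\X^k_\eta$ concretely as a polydisc in $\mathscr{M}(k[t,x_1,\ldots,y_n])$ with $h_\eta$ the restriction-to-$k[t]$ map, and then checks by hand that
\[
k[t,x_1,\ldots,y_n]\widehat{\otimes}_{k[t]}\mathscr{H}(p_0)\cong k[x_1,\ldots,y_n],\qquad
k[t,x_1,\ldots,y_n]\widehat{\otimes}_{k[t]}\mathscr{H}(p_r)\cong K_r\{x_1,\ldots,y_n\}\ (0<r<1).
\]
Your route is shorter and conceptually cleaner, but it rests on a relative base-change statement (fibre of $(\cdot)_\eta$ over a point of the base equals generic fibre of the formal base change) that is not stated verbatim in \cite{berk-vanish2}; the paper's direct computation has the advantage of being entirely self-contained and also makes the explicit description of $\X^k_\eta$ available for the next proposition.
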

\begin{proof}
By the construction of the generic fiber in \cite[\S
1]{berk-vanish2}, it suffices to consider the case where
$\X=\mathrm{Spf}\,A$, with $A$ of the form
$R[[x_1,\ldots,x_m]]\{y_1,\ldots,y_n\}$. Then $\X^k_\eta$ is the
polydisc $$\{z\in \mathscr{M}(k[t,x_1,\ldots,y_n])\,|\ |t(z)|<1
\mbox{ and }|x_i(z)|<1\mbox{ for }i=1,\ldots,m\}$$ where
$k[t,x_1,\ldots,y_n]$ carries the trivial Banach norm, and the map
$h_\eta$ sends the bounded multiplicative semi-norm $z$ to its
restriction to $k[t]$. Now we observe that
$$k[t,x_1,\ldots,y_n]\widehat{\otimes}_{k[t]}\mathscr{H}(p_0)\cong
k[x_1,\ldots,y_n]$$ while
$$k[t,x_1,\ldots,y_n]\widehat{\otimes}_{k[t]}\mathscr{H}(p_r)\cong
K_r\{x_1,\ldots,y_n\}$$ for $0<r<1$.
\end{proof}

\begin{prop}\label{fiberhomeo}
Let $\X$ be a special formal $R$-scheme. If we denote by $\lambda$
the map $|\X^k_\eta|\rightarrow |\mR_\eta|\cong [0,1[$, then for
each $r\in [0,1[$, there exists a canonical homeomorphism
$|\X(r)_\eta|\cong \lambda^{-1}(r)$ such that the square
$$\begin{CD}
|\X(r)_\eta|@>\sim >> \lambda^{-1}(r)\subset |\X^k_\eta|
\\ @V sp_{\X(r)} VV @VV sp_{\X^k} V
\\ |\X_0|@= |\X_0|
\end{CD}$$
commutes.

Moreover, for any $0<r<1$, there exists a canonical homeomorphism
$$\phi:\lambda^{-1}(\,]0,1[\,)\rightarrow |\X(r)_\eta|\times\,]0,1[$$ such that the composition
of $\phi$ with the projection
$|\X(r)_\eta|\times\,]0,1[\,\rightarrow ]0,1[$ coincides with
$\lambda: \lambda^{-1}(\,]0,1[\,)\rightarrow ]0,1[$.
\end{prop}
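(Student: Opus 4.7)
The plan is to derive part one directly from the preceding lemma, and to construct $\phi$ in part two by rescaling multiplicative seminorms via positive real powers.

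For part one, under the identification $\mR_\eta \cong [0,1[$ supplied by Lemma \ref{dischomeo}, the fiber $\lambda^{-1}(r)$ is exactly $h_\eta^{-1}(p_r)$, which the preceding lemma identifies canonically with $|\X(r)_\eta|$. The commutativity of the specialization square is then a local assertion on affine charts $\X = \mathrm{Spf}\, A$ with $A$ of the form $R[[x_1,\ldots,x_m]]\{y_1,\ldots,y_n\}$: both $sp_{\X^k}$ and $sp_{\X(r)}$ send a bounded multiplicative seminorm $|\cdot|_z$ on $A$ to the prime ideal $\{f \in A : |f(z)|<1\}$ viewed as a point of $|\X_0|$, and this prescription depends only on the underlying seminorm, not on whether $A$ is regarded as a $k$-algebra or a $K_r^o$-algebra.

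For part two, I would define $\phi(z) = (z^{\alpha(z)}, \lambda(z))$, where $\alpha(z) = \log r / \log \lambda(z)$ and $z^{\alpha}$ denotes the seminorm $f \mapsto |f(z)|^{\alpha}$. The non-trivial point is that $z^{\alpha(z)}$ actually lies in $|\X(r)_\eta|$: positive real powers preserve multiplicativity and the ultrametric inequality, and the boundedness conditions $|x_i(z)|<1$ survive since $\alpha(z)>0$. The restriction of $|\cdot|_{z}^{\alpha(z)}$ to $R$ coincides with $|\cdot|_r$ by virtue of the formula $|f(z)| = \lambda(z)^{v_R(f)}$ for $f \in R \setminus \{0\}$, which follows from the triviality of $|\cdot|_z$ on $k$ combined with the ultrametric inequality exactly as in the proof of Lemma \ref{dischomeo}.

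The inverse map sends $(w,s) \in |\X(r)_\eta| \times\,]0,1[$ to $w^{\log s / \log r}$, which by the same argument defines a point of $\lambda^{-1}(s) \subset |\X^k_\eta|$. Continuity of both $\phi$ and $\phi^{-1}$ is immediate from the continuity of $z \mapsto |f(z)|$ in the Berkovich spectral topology together with the continuity of the exponent $\log r/\log \lambda(z)$ on $\lambda^{-1}(\,]0,1[\,)$, and the identity $\lambda \circ \phi^{-1}(w,s) = r^{\log s/\log r} = s$ gives the compatibility with the second projection. The only genuine obstacle is to ensure that the rescaling operation glues canonically over an affine cover of the special formal $R$-scheme $\X$, but this is automatic since the $\alpha$-th power of a seminorm is defined intrinsically on the underlying topological space $|\X^k_\eta|$ without reference to any chart or presentation.
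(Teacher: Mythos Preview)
Your proof is correct and follows essentially the same route as the paper: both reduce to affine pieces and define $\phi$ via the rescaling $z\mapsto z^{\log_{\lambda(z)} r}$, with inverse $(w,s)\mapsto w^{\log_r s}$. Your treatment is in fact slightly more explicit than the paper's in two respects: you spell out why positive real powers preserve the ultrametric inequality and boundedness, and you articulate why the rescaling glues over charts (intrinsicness of the $\alpha$-th power of a seminorm), whereas the paper simply asserts ``it suffices to consider the affine case'' and leaves the gluing implicit.
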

\begin{proof}
It suffices to consider the case where $\X=\mathrm{Spf}\,A$ is
affine, with $A$ of the form
$$A=R[[x_1,\ldots,x_m]]\{y_1,\ldots,y_n\}/(f_1,\ldots,f_\ell) $$
Then $\X^k_\eta$ is a closed subset of the polydisc
$$E:=\{z\in \mathscr{M}(k[t,x_1,\ldots,y_n])\,|\ |t(z)|<1 \mbox{ and
}|x_i(z)|<1\mbox{ for }i=1,\ldots,m\}$$ where
$k[t,x_1,\ldots,y_n]$ carries the trivial Banach norm, and this
closed subset is defined by the equations $z(f_j)=0$,
$j=1,\ldots,\ell$. The map $\lambda$ sends $z$ to the point
$|t(z)|$ of $|\mR_\eta|\cong [0,1[$.

It is clear that $\lambda^{-1}(r)$ is canonically homeomorphic to
$|\X(r)_\eta|$ for each $r$, and that this homeomorphism is
compatible with the specialization maps $sp_{\X(r)}$ and
$sp_{\X^k}$: w.r.t. both of these maps, the image of a point $z$
in $\lambda^{-1}(r)$ is the open prime ideal $\{f\in A\,|\
|f(z)|<1\}$ of $A$.

 Now fix
$r$ in $]0,1[$, and consider the map
$$\Psi:\lambda^{-1}(\,]0,1[\,)\rightarrow |\X(r)_\eta|\times\,]0,1[:x\mapsto (x^{\log_{\lambda(x)} r},\lambda(x))$$
where we denote by $x^{\log_{\lambda(x)} r}$ the bounded
multiplicative semi-norm in $\mathscr{M}(k[t,x_1,\ldots,y_n])$
sending
 $f\in
k[t,x_1,\ldots,y_n]$ to $x(f)^{\log_{\lambda(x)} r}$; then clearly
$\lambda(x^{\log_{\lambda(x)} r})=r$. The map $\Psi$ is a
bijection, with inverse
$$\Psi^{-1}: |\X(r)_\eta|\times\,]0,1[ \rightarrow
\lambda^{-1}(\,]0,1[\,): (x,\rho)\mapsto x^{\log_{r} \rho}$$ and
one checks immediately that both $\Psi$ and $\Psi^{-1}$ are
continuous.
\end{proof}
\begin{prop}\label{homeq}
Let $\X$ be a strictly semi-stable formal $R$-scheme, let $E$ be a
union of irreducible components of $\X_s$, and denote by $\mE$ the
formal completion of $\X$ along $E$.

(1) The inclusion map
$$\mE(r)_\eta\rightarrow \mE_\eta^k$$
is a homotopy equivalence, for each $r\in [0,1[$.

(2) The natural map $E_\eta\rightarrow \mE(0)_\eta$, induced by
the closed immersion $E\rightarrow \mE(0)$, is a homotopy
equivalence. In particular, if $E$ is proper, then
$E^{an}\rightarrow \mE(0)_\eta$ is a homotopy equivalence.
\end{prop}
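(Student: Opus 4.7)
Part (2) follows directly from Corollary \ref{ssvar} applied to the strictly semi-stable $\widetilde{K}$-variety $X = \X_s$ and its union of components $E$: by construction $\mE(0) = \mE_s$ is, as a special formal $\widetilde{K}$-scheme, the formal completion of $\X_s$ along $E$, so the corollary directly yields the homotopy equivalence $E_\eta \to \mE(0)_\eta$. The ``in particular'' clause uses the isomorphism $E_\eta \cong E^{an}$ valid when $E$ is proper (\cite[1.10]{thuillier}).

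For Part (1), the main task is to construct a strong deformation retract
$$
\Phi^k \colon \mE^k_\eta \times [0,1] \longrightarrow \mE^k_\eta
$$
onto a subspace $\mathcal{S}^k \subset \mE^k_\eta$ whose restriction to each fiber $\lambda^{-1}(r) \cong \mE(r)_\eta$ (via Proposition \ref{fiberhomeo}) coincides with Berkovich's strong deformation retract $\Phi_{\X(r)}$ onto $S(\X(r)) \cap sp_{\X(r)}^{-1}(E)$. The key observation enabling the gluing is that, in an affine chart $\X \cong \mathrm{Spf}\,R\{x_0,\ldots,x_m\}/(x_0\cdots x_p - t)$, Berkovich's retraction endpoint is given by the uniform formula $z \mapsto \theta(|x_0(z)|,\ldots,|x_p(z)|)$, where $\theta(r')$ is the Gauss-type seminorm $\sum a_I x^I \mapsto \max_I |a_I|\,r'^I$; this prescription is intrinsic and independent of the ambient parameter $r$. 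The same uniformity applies to the torus action that governs the homotopy $\Phi_{\X(r)}$ (Section \ref{berk}, Case 1), so the fiber-wise constructions assemble to a single continuous map on all of $\mE^k_\eta$.

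With $\Phi^k$ in hand, the inclusion $\iota_r \colon \mE(r)_\eta \hookrightarrow \mE^k_\eta$ fits into the commutative square
$$
\begin{CD}
\mE(r)_\eta @>\iota_r>> \mE^k_\eta \\
@VVV @VVV \\
S(\X(r)) \cap \mE(r)_\eta @>>j> \mathcal{S}^k
\end{CD}
$$
whose vertical maps are the strong deformation retracts $\tau_{\X(r)}$ and $\Phi^k(\cdot,1)$, hence homotopy equivalences. The bottom inclusion $j$ is itself a homotopy equivalence: in the local model $\mathcal{S}^k$ is cut out as the open subset of $[0,1]^{p+1}$ defined by $\prod r'_i < 1$ (intersected with the open condition coming from $E$), and admits a strong deformation retract onto any chosen $\lambda$-fiber by an explicit rescaling argument. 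Applying the two-out-of-three property to the composition $\Phi^k(\cdot,1) \circ \iota_r = j \circ \tau_{\X(r)}$ gives that $\iota_r$ is a homotopy equivalence, proving Part (1).

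The principal technical hurdle is verifying the continuity of $\Phi^k$ at the boundary fiber $r = 0$, where Berkovich's $r$-coloured parametrization of the skeleton degenerates; this is handled uniformly by the intrinsic formula above, phrased in terms of the jointly continuous functions $z \mapsto |x_i(z)|$ on $\mE^k_\eta$.
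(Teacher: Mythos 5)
Part (2) of your proposal matches the paper's argument exactly: apply Corollary \ref{ssvar} to $\X_s$ and $E$, then use $E_\eta\cong E^{an}$ for proper $E$.

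For Part (1), the overall shape of your argument is the right one, but it contains a genuine gap, and it is precisely at the point you identify only in passing. You want to show the inclusion of a $\lambda$-slice $\mathcal{S}^k\cap\lambda^{-1}(r)$ into $\mathcal{S}^k$ is a homotopy equivalence, and you propose to do this ``by an explicit rescaling argument'' in the local model $\{u\in[0,1]^{[p]} : \prod_i u_i<1\}$. The natural rescaling one would write down here, $u_i\mapsto u_i^{\log_\rho r}$ (where $\rho=\prod u_i$), does \emph{not} extend continuously to the boundary face $\Sigma^p_0$. For example with $p=1$: the point $(\varepsilon,\varepsilon)$ rescales to $(\sqrt r,\sqrt r)$ while $(\varepsilon,\varepsilon^2)$ rescales to $(r^{1/3},r^{2/3})$, so no limit exists as $\varepsilon\to 0$ and both points tend to $(0,0)\in\Sigma^1_0$. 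This is exactly the ``minor error'' in Berkovich's original construction that the paper flags and corrects. The correct statement is a homeomorphism $\Sigma^n_0\times[0,1[\,\to\{(x,\rho):x\in\Sigma^n_\rho\}$, defined not by power-rescaling but \emph{geometrically}: send $(y,\rho)$ to the unique intersection of $\Sigma^n_\rho$ with the straight segment in $\R^{[n]}$ joining $y$ to $(1,\dots,1)$. This map is continuous, compatible with face maps (hence glues), and identifies $S(\X/\mathfrak{R})\cap(sp_{\X^k})^{-1}(E)$ with $|\Delta_E(\X_s)|\times\mathfrak{R}_\eta$; the homotopy equivalence of the slice inclusion then follows from the contractibility of $\mathfrak{R}_\eta\cong[0,1[$. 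Without this correction the rescaling step collapses.

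A secondary difference: you propose to (re)build the strong deformation retract $\Phi^k$ of $\mE^k_\eta$ onto $S(\X/\mathfrak{R})\cap(sp_{\X^k})^{-1}(E)$ from the torus action in Section \ref{berk}, Case 1. The paper instead cites the fiberwise construction directly to Berkovich \cite[Thm.\,4.1]{berk-limit} and \cite[5.2(iv)]{berk-contract}, which gives the compatibility with $\lambda$ for free. Your route is not wrong in principle, but it outsources the actual continuity verification at $r=0$ to a one-line assertion (``handled uniformly by the intrinsic formula''); this is nontrivial and is precisely what Berkovich's cited result accomplishes. Given the failure of the naive rescaling above, you should not take this kind of boundary continuity for granted.
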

\begin{proof}
(1) In the last part of the proof of \cite[4.1]{berk-limit},
Berkovich constructs the so-called skeleton
$S(\X/\mathfrak{R})\subset \X^k_\eta$ associated to the morphism
of special formal $k$-schemes $\X^k\rightarrow \mathfrak{R}$: it
is the union of the skeletons $S(\X(r))\subset \X(r)_\eta\cong
\lambda^{-1}(r)$, $r\in [0,1[$. Moreover, he shows that the map
$\Phi:\X^k_\eta\times[0,1]\rightarrow \X^k_\eta$ which coincides
with $\Phi_{\X(r)}$ on $\lambda^{-1}(r)\times [0,1]$,
 is a strong deformation
retract of $\X^k_\eta$ onto $S(\X/\mathfrak{R})$. By
\cite[5.2(iv)]{berk-contract} $\Phi$ restricts to a strong
deformation retract of $(sp_{\X^k})^{-1}(E)$ onto
$S(\X/\mathfrak{R})\cap (sp_{\X^k})^{-1}(E)$ compatible with
$\lambda$ (i.e. it is a strong deformation retract on each fiber
of $\lambda$).

Next, Berkovich states that there exists a homeomorphism
$|\Delta(\X_s)|\times \mathfrak{R}_\eta\rightarrow
S(\X/\mathfrak{R})$ such that the projection of
$|\Delta(\X_s)|\times \mathfrak{R}_\eta$ on the second factor
$\mathfrak{R}_\eta$ corresponds to the map $\lambda$ on
$S(\X/\mathfrak{R})$. His construction contains a minor error, but
it can easily be corrected as follows. By the same arguments, it
suffices to construct a homeomorphism
$$f:\Sigma^n_0\times [0,1[\,\rightarrow \{(x,\rho)\in \R^{[n]} \times [0,1[\,|\,x\in \Sigma^n_{\rho}\}$$ for each $n$,
such that these homeomorphisms are compatible with the face maps
(so that we get good gluing properties). We can define $f$ by
sending $(y,\rho)$ to $(x,\rho)$, where $x$ is the unique
intersection point of $\Sigma^n_{\rho}$ and the segment in
$\R^{[n]}$ joining $y$ and $(1,\ldots,1)$. This homeomorphism
identifies $S(\X/\mathfrak{R})\cap (sp^k)^{-1}(E)$ with
$|\Delta_E(X)|\times \mathfrak{R}_\eta$. This proves (1).

(2) Since $\X_s$ is a strictly semi-stable $k$-variety, and
$\mE(0)$ is isomorphic to the completion of $\X_s$ along $E$, this
follows immediately from Corollary \ref{ssvar} and the fact that
$E_\eta\cong E^{an}$ for proper $E$.
\end{proof}

\begin{lemma}\label{extension}
Assume that $k$ is algebraically closed. If $\X$ is a strictly
semi-stable formal $R$-scheme, and $E$ is any strata subset of
$\X_s$, then the natural map
$$sp_{\X(r)}^{-1}(E)\widehat{\times}_{K_r} L\rightarrow sp_{\X(r)}^{-1}(E) $$
is a homotopy equivalence for any $r\in \, ]0,1[$ and any
isometric embedding of non-archimedean fields $K_r\subset L$.
\end{lemma}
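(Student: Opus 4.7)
The plan is to reduce to Berkovich's construction of the skeleton by base change. Form the special formal $L^o$-scheme $\X_L:=\X\widehat{\times}_{K_r^o}L^o$, whose generic fiber is $(\X_L)_\eta = \X(r)_\eta\widehat{\times}_{K_r}L$. Local étale charts $\mU\rightarrow \mathrm{Spf}\,K_r^o\{x_0,\ldots,x_m\}/(x_0\cdots x_p-\pi)$ for $\X(r)$ pull back to étale charts $\mU_L\rightarrow \mathrm{Spf}\,L^o\{x_0,\ldots,x_m\}/(x_0\cdots x_p-\pi)$ for $\X_L$. Since $|\pi|_L=|\pi|_{K_r}=r$ by hypothesis, these charts satisfy exactly the hypothesis of Case 1 in Section \ref{berk} (with $\alpha=\pi$), so Berkovich's construction produces a skeleton $S(\X_L)\subset (\X_L)_\eta$ together with a strong deformation retract $\Phi_{\X_L}:(\X_L)_\eta\times[0,1]\rightarrow (\X_L)_\eta$ onto it.

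The next step is to identify everything combinatorially. Since $k$ is algebraically closed, the components of $\X_s$ are geometrically irreducible, so the base-change map $\X_L\rightarrow \X$ induces a bijection $Irr((\X_L)_s)\cong Irr(\X_s)$, hence a canonical isomorphism of simplicial sets $\Delta((\X_L)_s)\cong \Delta(\X_s)$ and a canonical homeomorphism $|\Delta((\X_L)_s)|\cong |\Delta(\X_s)|$ that matches strata subsets. Because the skeleton is built from the $r$-coloured coordinates $|T_i(z)|$ (Lemma \ref{tau}), and since the value $|T_i(z)|$ is unaffected by the isometric extension, the natural projection $\pi_L:(\X_L)_\eta\rightarrow \X(r)_\eta$ sends $S(\X_L)$ homeomorphically onto $S(\X(r))$, and intertwines $\Phi_{\X_L}$ with $\Phi_{\X(r)}$. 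Writing $E_L$ for the pullback of $E$ to $(\X_L)_s$, the restriction of $\Phi_{\X_L}$ to $sp_{\X_L}^{-1}(E_L)\times[0,1]$ is a strong deformation retract onto $S(\X_L)\cap sp_{\X_L}^{-1}(E_L)\cong|\Delta_E(\X_s)|$, and $\pi_L$ sends $sp_{\X_L}^{-1}(E_L)$ to $sp_{\X(r)}^{-1}(E)$ (as the specialization map is preserved under base change).

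Putting this together, we obtain a commutative diagram
\begin{equation*}
\begin{CD}
sp_{\X_L}^{-1}(E_L) @>\pi_L>> sp_{\X(r)}^{-1}(E)
\\ @V\tau_{\X_L}VV @VV\tau_{\X(r)}V
\\ S(\X_L)\cap sp_{\X_L}^{-1}(E_L) @>\sim>> S(\X(r))\cap sp_{\X(r)}^{-1}(E)
\end{CD}
\end{equation*}
in which the vertical arrows are homotopy equivalences and the bottom arrow is a homeomorphism. By the two-out-of-three property for homotopy equivalences, the top horizontal arrow $\pi_L$ is a homotopy equivalence. Since $sp_{\X_L}^{-1}(E_L)$ is, by construction of the generic fiber, canonically isomorphic to $sp_{\X(r)}^{-1}(E)\widehat{\times}_{K_r}L$, this is precisely the desired statement.

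The only subtle point is verifying that Berkovich's strong deformation retract is genuinely functorial under the isometric extension $K_r\hookrightarrow L$ at the level of the local models; this is immediate from the explicit formula in Case 1 of Section \ref{berk}, since both the map $\theta$ and the retraction $\phi:z\mapsto (|x_0(z)|,\ldots,|x_p(z)|)$ depend only on the absolute values of coordinates. Globalizing via the étale charts, torus-action lift, and gluing steps of Cases 2 and 3 is then routine.
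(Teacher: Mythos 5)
Your proof is correct and takes essentially the same route as the paper: both base-change $\X$ to $L^o$, use algebraic closedness of $k$ to identify irreducible components and hence the simplicial complexes $|\Delta(\cdot)|$, invoke compatibility of Berkovich's skeleton and retraction with the isometric extension, and conclude from a commutative diagram of retractions. The only cosmetic difference is that you phrase the final step as a two-out-of-three argument for a single square, whereas the paper writes out a longer four-column diagram expressing the same fact.
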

\begin{proof}
Put $\mY=\X\widehat{\times}_{\mathrm{Spf}\,R}\mathrm{Spf}\,L^o$.
Then $\mY_\eta\cong \X(r)_\eta\widehat{\times}_{K_r} L$,
$\mY_s\cong \X_s\times_k \widetilde{L}$, and these natural
isomorphisms commute with the specialization maps $sp_{\X(r)}$ and
$sp_{\mY}$, so that they induce a natural isomorphism
$$sp_{\X(r)}^{-1}(E)\widehat{\times}_{K_r} L \cong
sp_{\mY}^{-1}(F)$$ where $F$ denotes the inverse image of $E$ in
$\mY_s$.

 Since $k$ is algebraically
closed, the natural map $h_s:\mY_s\rightarrow \X_s$ induces a
bijection $Irr(\mY_s)\cong Irr(\X_s)$ and a homeomorphism
$\alpha:|\Delta(\mY_s)|\cong |\Delta(\X_s)|$ identifying
$|\Delta_{F}(\mY_s)|$ with $|\Delta_E(\X_s)|$. If we denote by $h$
the natural morphism $\mY_\eta\rightarrow \X(r)_\eta$, it is easy
to see from the description in Section \ref{berk} that the diagram
$$\minCDarrowwidth20pt\begin{CD}
sp_{\mY}^{-1}(F)@>\tau_{\mY} >> |\Delta(\mY_s)|@>\cong
>>S(\mY)\cap sp_{\mY}^{-1}(F)@>>> sp_{\mY}^{-1}(F)
\\ @VhVV @V\alpha V\cong V @V\cong VhV @VVhV
\\sp^{-1}_{\X(r)}(E)@>\tau_{\X(r)} >> |\Delta(\X_s)|@>\cong >> S(\X(r))\cap
sp_{\X(r)}^{-1}(E)@>>> sp_{\X(r)}^{-1}(E)
\end{CD}$$
commutes (the right horizontal arrows are the inclusion maps).
\end{proof}

\begin{prop}\label{longexact}
Assume that $k$ is an algebraically closed field of characteristic
zero, and fix $r\in \, ]0,1[$. Let $X$ be a proper flat
$R$-variety such that $X\times_R K$ is smooth over $K$, and such
that $X_s$ has at most one singular point $x$, and denote by $\X$
its $t$-adic completion. Then there exists a canonical long exact
sequence in integral singular cohomology
$$\minCDarrowwidth10pt\begin{CD}
\ldots @>>> H^i((X_s)^{an},\Z)@>>>
H^i(\overline{\X(r)}_\eta,\Z)@>i^*>>
\widetilde{H}^i(\,\overline{]x[}\,,\Z)@>>>
H^{i+1}((X_s)^{an},\Z)@>>>\ldots
\end{CD} $$
with $]x[= sp_{\X(r)}^{-1}(x)$, and where
$i:\overline{]x[}\rightarrow \overline{\X(r)}_\eta$ is the
inclusion map and $\widetilde{H}^*(\cdot)$ is reduced cohomology.
\end{prop}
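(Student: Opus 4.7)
My plan is to use semi-stable reduction to exhibit both $\overline{\X(r)}_\eta$ and $(X_s)^{an}$ as homotopy pushouts over a common ``exceptional fibre'', and then to deduce the desired long exact sequence from the cofibre sequence of the resulting quotient map.

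Since $k$ is algebraically closed of characteristic zero, semi-stable reduction yields an integer $n\geq 1$ such that, setting $R'=k[[s]]$ with $s^n=t$, $L=k((s))$ and $r'=r^{1/n}$, one obtains a proper strictly semi-stable formal $R'$-scheme $\mY$ equipped with a proper birational $R'$-morphism $g:\mY\to\X\widehat{\times}_R R'$ that is an isomorphism outside $sp^{-1}(x)$. The inclusion $K_r\hookrightarrow L_{r'}$ is isometric with $\widehat{L_{r'}^a}=\widehat{K_r^a}$, so base change commutes with $g$. Writing $F:=g_s^{-1}(x)\subset\mY_s$ and $\mathfrak{F}$ for its formal completion in $\mY$, I arrange (by additional blow-ups in characteristic zero) that $F$ is a union of irreducible components of $\mY_s$. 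Proposition \ref{homeq}(1)--(2), Corollary \ref{ssvar} and Lemma \ref{extension} then yield canonical homotopy equivalences
\[
\overline{\mY(r')}_\eta\simeq \mY_s^{an},\qquad \overline{\mathfrak{F}(r')}_\eta\simeq F^{an},
\]
compatible with the natural inclusions.

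The proper morphism $\overline{g_\eta}$ realises $\overline{\X(r)}_\eta$ as the topological pushout of $\overline{]x[}\hookleftarrow\overline{\mathfrak{F}(r')}_\eta\hookrightarrow \overline{\mY(r')}_\eta$, and since every Berkovich space in sight has the homotopy type of a finite CW-complex (via Berkovich's skeletal retractions of Section \ref{berk}), this is also a homotopy pushout. Analogously, $g_s:\mY_s\to X_s$ realises $(X_s)^{an}$ as the homotopy pushout of $\{x\}\hookleftarrow F^{an}\hookrightarrow \mY_s^{an}$. The universal property of the pushouts, applied to the constant map $\overline{]x[}\to\{x\}$ and to the homotopy equivalences of the previous paragraph, produces a canonical map $\pi:\overline{\X(r)}_\eta\to(X_s)^{an}$ which collapses $\overline{]x[}$ to $\{x\}$ and, upon passage to quotients, yields a homotopy equivalence
\[
\overline{\X(r)}_\eta/\overline{]x[}\simeq (X_s)^{an}
\]
of based spaces (here I use that quotients of CW pairs are homotopy invariant, and that the relevant inclusions become cofibrations after CW approximation).

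The cofibre sequence $\overline{]x[}\hookrightarrow \overline{\X(r)}_\eta\to \overline{\X(r)}_\eta/\overline{]x[}$ induces a long exact sequence in reduced singular cohomology; substituting the above homotopy equivalence and using $\widetilde{H}^i=H^i$ in positive degrees produces the desired long exact sequence, with $\widetilde{H}^i(\overline{]x[})$ appearing precisely because of the reduced-cohomology nature of the cofibre sequence. The principal technical obstacle is arranging a semi-stable reduction in which $F=g_s^{-1}(x)$ is a union of irreducible components of $\mY_s$: strict transforms of components of $X_s$ through $x$ necessarily meet the exceptional fibre above $x$, so this requires either a careful choice of semi-stable reduction or a strengthening of Proposition \ref{homeq}(2) to the relevant strata subsets. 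A secondary technicality is the verification that the Berkovich pushout square is a genuine homotopy pushout, which follows from the CW-approximation provided by the skeletal retractions.
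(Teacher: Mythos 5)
Your overall strategy — pass to a strictly semi-stable model via semi-stable reduction, then transport the long exact sequence across the homotopy equivalences of Proposition \ref{homeq} and Lemma \ref{extension} — is exactly the one in the paper, and all the right ingredients appear. But the specific construction you use to glue these ingredients together introduces real difficulties that the paper's argument is careful to sidestep, and as written there is a gap.

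The problem is the map $\pi:\overline{\X(r)}_\eta\to(X_s)^{an}$. You claim it is produced ``by the universal property of the pushouts'' from the homotopy equivalences of Proposition \ref{homeq}; but those equivalences do not run from $\overline{\mY(r')}_\eta$ to $\mY_s^{an}$ (nor from $\overline{\mathfrak{F}(r')}_\eta$ to $F^{an}$). They are a zigzag $\overline{\mY(r')}_\eta\hookrightarrow\mY^k_\eta\hookleftarrow\mY_s^{an}$ through the common intermediate $\mY^k_\eta$, and likewise for the formal completion along $F$. So no span map, and hence no induced map of pushouts, exists on the nose; you would have to pass to a homotopy category or pick homotopy inverses, at which point the compatibility with the quotient maps is only up to homotopy and needs a further argument. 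Related to this, your final step works with the cofibre sequence of $\overline{]x[}\hookrightarrow\overline{\X(r)}_\eta$, but $\overline{]x[}=sp_{\X(r)}^{-1}(x)$ (base-changed) is an \emph{open} subspace, so this inclusion is not a cofibration and the cofibre sequence is not automatic; this is a heavier technicality than the one you flag, and is not resolved by CW approximation alone.

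The paper avoids all of this. It never constructs a map $\overline{\X(r)}_\eta\to(X_s)^{an}$. Instead it observes that $h_s^{an}:(Y_s)^{an}\to(X_s)^{an}$ is a surjection of compact Hausdorff spaces which is a homeomorphism off the \emph{closed} compact subspace $E^{an}$ and collapses $E^{an}$ to $x$, whence a genuine homeomorphism $(Y_s)^{an}/E^{an}\approx(X_s)^{an}$. This gives the long exact sequence of the good pair $((Y_s)^{an},E^{an})$ in singular cohomology directly, with no homotopy-pushout or cofibration considerations. The equivalences from Proposition \ref{homeq} and Lemma \ref{extension} are then used only to relabel the middle groups: $H^i((Y_s)^{an},\Z)\cong H^i(\overline{\X(r)}_\eta,\Z)$ and $\widetilde H^i(E^{an},\Z)\cong\widetilde H^i(\overline{]x[},\Z)$, via the zigzag through $\mY^k_\eta$ and $\mE^k_\eta$. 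Working at the level of cohomology groups rather than spaces makes the zigzag harmless. Finally, the ``principal technical obstacle'' you raise — arranging $F=g_s^{-1}(x)$ to be a union of components of $\mY_s$ — is not actually an obstacle: this is built into the paper's notion of a strictly semi-stable model of a germ (Definition \ref{red}), and is achieved over a field of characteristic zero by standard resolution and semi-stable reduction arguments, as the paper notes. I would recommend recasting your argument along the paper's lines: prove the homeomorphism $(Y_s)^{an}/E^{an}\approx(X_s)^{an}$, take the long exact sequence of that closed pair, and only then substitute the homotopy equivalences on cohomology.
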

\begin{proof}
 Passing to a finite extension of $R$, we may assume that there
 exists a proper morphism of $R$-varieties $h:Y\rightarrow X$ such that
 $Y$ is strictly semi-stable, such that $h$ is an isomorphism over
 the complement of $x$ in $X$, and such that $E:=h^{-1}(x)$ is a union of irreducible components
 of $Y_s$. We denote by $\mY$ the $t$-adic
 completion of $Y$, and by $\mE$ the formal completion
of $Y$ along $E$.

The morphism $h$ induces a surjective morphism of $k$-analytic
spaces $h_s^{an}:(Y_s)^{an}\rightarrow (X_s)^{an}$; since $X$ and
$Y$ are proper over $R$, $(Y_s)^{an}$ and $(X_s)^{an}$ are compact
Hausdorff spaces. Moreover, $h_s^{an}$ maps $(Y_s\setminus
E)^{an}\cong (Y_s)^{an}\setminus E^{an}$ isomorphically to
$(X_s)^{an}\setminus \{x\}$, and maps $E^{an}$ to $\{x\}$.
Therefore, $h_s^{an}$ induces a homeomorphism
$(Y_s)^{an}/E^{an}\approx (X_s)^{an}$, and we get a natural exact
sequence
$$\minCDarrowwidth10pt\begin{CD}
\ldots @>>> H^i((X_s)^{an},\Z)@>>> H^i((Y_s)^{an},\Z)@>>>
\widetilde{H}^i(E^{an},\Z)@>>> H^{i+1}((X_s)^{an},\Z)@>>>\ldots
\end{CD} $$
By Proposition \ref{homeq} and Lemma \ref{extension}, the natural
maps $\overline{\X(r)}_\eta\rightarrow \mY^k_\eta$,
$(Y_s)^{an}\rightarrow \mY^k_\eta$, $E^{an}\rightarrow \mE^k_\eta$
and $\overline{]x[}\cong \overline{\mE(r)_\eta}\rightarrow
\mE^k_\eta$ are all homotopy equivalences. Hence, we obtain the
desired exact sequence.
\end{proof}

Now we come to the main result of this section: the description of
the homotopy type of the analytic Milnor fiber. First, we need an
auxiliary definition. Let $r\in\,]0,1[$ be the value fixed in
Section \ref{subsec-anmil} to define the analytic Milnor fiber
$\mathscr{F}_x$ as a $K_r$-analytic space.

\begin{definition}[Strictly semi-stable model]\label{red}
 Let $X$ be a variety over
$k$, endowed with a morphism $f:X\rightarrow \mathrm{Spec}\,k[t]$
which is flat over the origin and has smooth generic fiber, and
let $x$ be a closed point of the special fiber $X_s=f^{-1}(0)$. A
strictly semi-stable model of the germ $(f,x)$ of $f$ at $x$
consists of the following data:
\begin{enumerate}
\item an integer $d>0$, and an embedding of $k[t]$-algebras
$$A_d=k[t,u]/(u^d-t)\rightarrow K^a$$
 \item
a flat projective morphism $g:Y\rightarrow \mathrm{Spec}\,A_d$
whose $u$-adic completion is strictly semi-stable,
\item open subschemes $U$ and $V$ of $X$, resp. $Y$, with $x\in
U$, \item a proper morphism $\varphi:V\rightarrow
U\times_{k[t]}A_d$ which is an isomorphism over the complement of
$U_s$, such that $g=p_2\circ \varphi$ on $V$ (with $p_2$ the
projection $U\times_{k[t]}A_d\rightarrow \mathrm{Spec}\,A_d$) and
such that $\varphi^{-1}(x)$ is a union of irreducible components
of the special fiber $Y_s$ of $g$.
\end{enumerate}
We'll denote this strictly semi-stable model by $(Y,g,\varphi)$
(the other data are implicit in the notation). We call $d$ the
ramification index of the strictly semi-stable model.

 A strictly
semi-stable model of the analytic Milnor fiber $\mathscr{F}_x$ of
$f$ at $x$ consists of the following data:
\begin{enumerate}
\item an integer $d>0$, and an embedding of non-archimedean
$K_r$-fields
$$K_r(d)=K_r[u]/(u^d-t)\rightarrow K_r^a$$
 \item
a strictly semi-stable formal $K_r(d)^o$-scheme $\mY$, and a
closed subvariety $E$ of $\mY_s$ which is a union of irreducible
components of $\mY_s$,
\item an isomorphism of $K_r(d)$-analytic spaces
$\varphi:sp_{\mY(r)}^{-1}(E)\cong \mathscr{F}_x\times_{K_r}
K_r(d)$.
\end{enumerate}
We'll denote this strictly semi-stable model by $(\mY,E,\varphi)$
(the other data are implicit in the notation). We call $d$ the
ramification index of the strictly semi-stable model.
\end{definition}
It is clear that any strictly semi-stable model of $(f,x)$ induces
a strictly semi-stable model of $\mathscr{F}_x$ by passing to the
$t$-adic completion (and of course, this still holds if we omit
the projectivity condition in (2)).
\begin{prop}
Suppose that $k$ has characteristic zero. Let $X$ be a variety
over $k$, endowed with a morphism $f:X\rightarrow
\mathrm{Spec}\,k[t]$ which is flat over the origin and has smooth
generic fiber, and let $x$ be a closed point of the special fiber
$X_s=f^{-1}(0)$. Then $(f,x)$ admits a strictly semi-stable model.
\end{prop}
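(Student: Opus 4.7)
The plan is to combine compactification, Hironaka's embedded resolution of singularities, and the semi-stable reduction theorem of Kempf-Knudsen-Mumford-Saint-Donat, all of which are available in characteristic zero. First I would shrink $X$ to a quasi-projective open neighborhood $U$ of $x$ (such a $U$ exists since $x$ is a closed point of the $k$-variety $X$), then compactify $f|_U$ to a flat projective morphism $\bar f : \bar X \to \mathrm{Spec}\,k[t]$ with $U$ open in $\bar X$ and with smooth generic fiber. Concretely, this is done by taking the closure of the graph of $f|_U$ inside $\mathbb{P}^N_k \times \mathbb{A}^1_k$ after a locally closed embedding $U \hookrightarrow \mathbb{P}^N_k$, removing the components of $\bar X_s$ disjoint from $U_s$ to restore flatness over the origin, and using generic smoothness (and a resolution away from $x$ if needed) to ensure that the generic fiber of $\bar f$ is smooth.

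Next, I apply Hironaka's principalization together with embedded resolution of singularities to the coherent ideal $\mathcal{I}_{\{x\}} \cdot \mathcal{I}_{\bar X_s}$ on $\bar X$. This produces a projective birational morphism $\rho : W \to \bar X$, obtained from a sequence of blow-ups centered over $\{x\}$, such that $W$ is smooth over $k$, the reduced special fiber $(W_s)_{\mathrm{red}}$ is a strict normal crossings divisor on $W$, and the scheme-theoretic preimage $\rho^{-1}(\{x\})$ is a Cartier divisor on $W$ supported inside $W_s$ and meeting it in strict normal crossings. In particular, $\rho^{-1}(\{x\})_{\mathrm{red}}$ is a union of irreducible components of $W_s$, and $\rho$ is an isomorphism over $\bar X \setminus \{x\}$, hence over $U \setminus U_s$.

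The third step invokes the Kempf-Knudsen-Mumford-Saint-Donat semi-stable reduction theorem for the morphism $W \to \mathrm{Spec}\,k[t]$, whose special fiber is a strict normal crossings divisor: there exists an integer $d>0$ and a projective toroidal birational morphism $\psi : Y \to W \times_{k[t]} A_d$ (with $A_d = k[t,u]/(u^d-t)$) such that $Y$ is strictly semi-stable and projective over $\mathrm{Spec}\,A_d$. Because toroidal modifications respect the stratification of $W_s$, the preimage $\psi^{-1}(\rho^{-1}(\{x\}) \times_{k[t]} A_d)$ is itself a union of irreducible components of $Y_s$. Fixing any embedding of $k[t]$-algebras $A_d \hookrightarrow K^a$, setting $\varphi := (\rho \times_{k[t]} \mathrm{id}_{A_d}) \circ \psi$ and $V := \varphi^{-1}(U \times_{k[t]} A_d)$, the data $(Y,\, p_2 \circ \psi,\, \varphi|_V)$ satisfies all conditions of Definition \ref{red}.

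The main obstacle is the semi-stable reduction step itself, which rests on a delicate combinatorial construction: one subdivides the cone complex of the normal crossings configuration $W_s$ into unimodular simplices after rescaling the lattice by $1/d$ for a suitable $d$, and must then verify that the resulting toroidal modification is projective and produces a strictly semi-stable total space. The additional compatibility requirement on $\varphi^{-1}(x)$ is however essentially free: since $\rho^{-1}(\{x\})$ was already arranged to be a union of components of $W_s$ during the Hironaka step, any toroidal refinement automatically preserves this property, and one does not need to incorporate $\{x\}$ as further combinatorial input into the semi-stable reduction.
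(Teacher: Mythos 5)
Your proof is correct and follows the same strategy as the paper's own (very compressed) argument: reduce to the projective case by compactifying an affine neighbourhood of $x$ and resolving at infinity, then invoke the KKMS semi-stable reduction theorem. The added value of your write-up is that you make explicit the intermediate Hironaka step that turns $\rho^{-1}(\{x\})$ into a divisor supported in the SNC special fiber before applying semi-stable reduction, together with the (correct) observation that a toroidal modification pulls back a Cartier divisor supported in $W_s$ to a Cartier divisor supported in $Y_s$ and hence to a union of irreducible components of the reduced SNC fiber $Y_s$; this is precisely what makes the $\varphi^{-1}(x)$ condition in Definition~\ref{red} come out for free, a point the paper leaves to the reader.
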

\begin{proof}
We may as well assume that $f$ is projective: restrict $f$ to an
affine neighbourhood of $x$, consider its projective completion,
and resolve singularities at infinity. Now the result follows from
the semi-stable reduction theorem \cite[II]{Kempf}.
\end{proof}
\begin{cor}
Under the same conditions, $\mathscr{F}_x$ admits a strictly
semi-stable model.
\end{cor}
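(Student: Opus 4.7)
The plan is to combine the preceding Proposition with the parenthetical observation made right before the Corollary. The hypotheses of the Corollary are exactly those of the Proposition, so I would begin by applying it to $(X,f,x)$ to obtain a strictly semi-stable model $(Y,g,\varphi)$ of the germ $(f,x)$ of some ramification index $d$, in the sense of Definition \ref{red}.

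From this algebraic model I would then produce a strictly semi-stable model of $\mathscr{F}_x$ by passing to the $u$-adic completion. Let $\mY$ be the $u$-adic completion of $Y$; by clause (2) of Definition \ref{red} (for $(f,x)$-models), $\mY$ is a strictly semi-stable formal $K_r(d)^o$-scheme. Set $E:=\varphi^{-1}(x)$; by clause (4) this is a union of irreducible components of $\mY_s=Y_s$. To produce the required isomorphism $\varphi':sp_{\mY(r)}^{-1}(E)\cong \mathscr{F}_x\widehat{\times}_{K_r}K_r(d)$, I would exploit that $\varphi:V\to U\times_{k[t]}A_d$ is proper and an isomorphism over the complement of $U_s$. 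Passing to $u$-adic formal completions along $E$ on the source and along the preimage of $x$ on the target yields a proper morphism of special formal $K_r(d)^o$-schemes which is an isomorphism away from the special fibers. Its target is $\mathrm{Spf}(\widehat{\mathcal{O}}_{X,x}\widehat{\otimes}_R K_r(d)^o)$, whose Berkovich generic fiber is canonically $\mathscr{F}_x\widehat{\times}_{K_r}K_r(d)$ by Section \ref{subsec-anmil}, while its source has generic fiber $sp_{\mY(r)}^{-1}(E)$; the induced map on generic fibers is the desired $\varphi'$.

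The only non-formal point is the assertion that a proper morphism of special formal $K_r(d)^o$-schemes which is an isomorphism away from the special fiber induces an isomorphism on Berkovich generic fibers. This is exactly the content of the author's ``it is clear that\ldots'' observation immediately preceding the Corollary, so in the write-up I would simply invoke it (it can be justified, for instance, through Raynaud's theory of admissible blow-ups or through proper base change in the Berkovich framework).
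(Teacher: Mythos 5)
Your proposal is correct and follows exactly the route the paper intends: the Corollary has no written proof because it is meant to follow directly from the preceding Proposition together with the parenthetical remark at the end of Definition \ref{red} (a strictly semi-stable model of $(f,x)$ induces one of $\mathscr{F}_x$ by $t$-adic completion). You invoke those same two ingredients, and the extra detail you supply --- that the properness of $\varphi$ and the fact that it is an isomorphism away from $U_s$ force the induced map on tubes $sp_{\mY(r)}^{-1}(E)\to sp^{-1}(x)\cong \mathscr{F}_x\widehat{\times}_{K_r}K_r(d)$ to be an isomorphism --- is precisely the content the author deems ``clear'' and leaves implicit.
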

\begin{theorem}\label{hom-mil}
Suppose that $k$ is algebraically closed (of arbitrary
characteristic). Let $X$ be a variety over $k$, endowed with a
morphism $f:X\rightarrow \mathrm{Spec}\,k[t]$ which is flat over
the origin and has smooth generic fiber, and let $x$ be a closed
point of the special fiber $X_s=f^{-1}(0)$. Suppose that
$\mathscr{F}_x$ admits a strictly semi-stable model
$(\mY,E,\varphi)$. Then $\overline{\mathscr{F}}_{\!\! x}$ is
naturally homotopy-equivalent to $|\Delta(E)|$. In particular, the
homotopy type of $|\Delta(E)|$ does not depend on the chosen
strictly semi-stable model.
\end{theorem}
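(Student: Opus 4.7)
The plan is to exhibit a chain of homotopy equivalences
$$\overline{\mathscr{F}}_x \;\simeq\; sp_{\mY(r)}^{-1}(E)\widehat{\times}_{K_r(d)}\widehat{K_r^a} \;\simeq\; sp_{\mY(r)}^{-1}(E) \;\simeq\; |\Delta_E(\mY_s)| \;\simeq\; |\Delta(E)|,$$
each link being supplied by a tool already developed in Sections \ref{sec-simp} and \ref{sec-hom}, together with the isomorphism $\varphi$ built into the strictly semi-stable model.

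First I would use $\varphi$ to translate the analytic Milnor fiber into the formal language of $\mY$. Since $K_r(d)$ is a finite extension of $K_r$ inside $\widehat{K_r^a}$, we have the canonical identification
$$\overline{\mathscr{F}}_x \;=\; \mathscr{F}_x \widehat{\times}_{K_r}\widehat{K_r^a} \;\cong\; \bigl(\mathscr{F}_x \widehat{\times}_{K_r} K_r(d)\bigr)\widehat{\times}_{K_r(d)}\widehat{K_r^a},$$
and $\varphi$ identifies the inner factor with $sp_{\mY(r)}^{-1}(E)$. To go from this base change back to $sp_{\mY(r)}^{-1}(E)$ itself I want to apply Lemma \ref{extension}. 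The point here is that $K_r(d)^o=k[[u]]$ with residue field $k$, which is algebraically closed by hypothesis, so the lemma is available (with $K_r(d)$ playing the role of $K_r$) and yields the first homotopy equivalence.

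Next, since $E$ is a strata subset of $\mY_s$, Berkovich's strong deformation retract $\Phi_{\mY(r)}$ recalled in Section \ref{berk} restricts to a strong deformation retract of $sp_{\mY(r)}^{-1}(E)$ onto $S(\mY(r))\cap sp_{\mY(r)}^{-1}(E)$, and Berkovich's canonical homeomorphism identifies the latter with $|\Delta_E(\mY_s)|$. Finally, Proposition \ref{deform}, applied to the strictly semi-stable variety $\mY_s$ and its union of irreducible components $E$, provides a strong deformation retract of $|\Delta_E(\mY_s)|$ onto $|\Delta(E)|$. Composing the three retractions with the base-change equivalence produces the asserted homotopy equivalence $\overline{\mathscr{F}}_x \simeq |\Delta(E)|$, and naturality is immediate from the canonicity of each step given the data $(\mY,E,\varphi)$.

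The last assertion of the theorem, that the homotopy type of $|\Delta(E)|$ is independent of the chosen strictly semi-stable model, is then automatic: the left-hand side $\overline{\mathscr{F}}_x$ is intrinsic to $(f,x)$, so any two strictly semi-stable models yield simplicial sets whose realizations share its homotopy type. The main subtlety I expect is purely bookkeeping, namely verifying that the hypotheses of Lemma \ref{extension} and of Berkovich's constructions transfer cleanly from the original ground ring $R=k[[t]]$ to $K_r(d)^o=k[[u]]$; once one observes that this residue field is again the algebraically closed field $k$ and that $\mY$ is strictly semi-stable over $K_r(d)^o$ by definition of the model, the argument is a direct assembly of the results already proved.
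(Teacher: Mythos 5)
Your argument is correct and follows essentially the same route as the paper: the isomorphism $\varphi$ translates $\overline{\mathscr{F}}_x$ into $sp_{\mY(r)}^{-1}(E)\widehat{\times}_{K_r(d)}\widehat{K_r^a}$, Lemma~\ref{extension} removes the base change (noting that $K_r(d)^o\cong k[[u]]$ has the same algebraically closed residue field $k$), and the skeleton machinery reduces this to $|\Delta(E)|$. The paper's own proof is terser, citing Lemma~\ref{extension} and Proposition~\ref{homeq}; you instead bypass Proposition~\ref{homeq} and go straight to the two ingredients from which it is built, namely Berkovich's strong deformation retract of $sp_{\mY(r)}^{-1}(E)$ onto the strata part of the skeleton $|\Delta_E(\mY_s)|$, and Proposition~\ref{deform} to retract $|\Delta_E(\mY_s)|$ further onto $|\Delta(E)|$. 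This is a legitimate, marginally more explicit unwinding (it spells out the implicit final step of the paper's proof, namely that one lands on $|\Delta(E)|$ via the skeleton of the reduced special fiber); the substance and the key lemmas are the same, so I would regard this as the same approach rather than a genuinely different one.
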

\begin{proof}
Let $d$ be the ramification index of the strictly semi-stable
model $(\mY,E,\varphi)$.
 The isomorphism $\varphi$ induces an isomorphism
$$sp^{-1}_{\mY(r)}(E)\widehat{\times}_{K_r(d)}\widehat{(K_r)^a}\cong
\overline{\mathscr{F}}_{\!\! x}$$ so the result follows from Lemma
\ref{extension} and Proposition \ref{homeq}.
\end{proof}
\begin{remark}
 By the same arguments, we have the following result: suppose
that $k$ is algebraically closed, and fix $r\in \,]0,1[$. Consider
a generically smooth $stft$ formal $R$-scheme $\X$, and assume
that it admits a strictly semi-stable model $h:\mY\rightarrow \X$
(i.e. $\mY$ is a strictly semi-stable formal $L^o$-scheme for some
finite extension $L$ of $K_r$ in $K_r^a$, and $h$ is a morphism of
formal $R$-schemes such that the induced morphism
$\mY_\eta\rightarrow \X(r)_\eta\times_{K_r}L$  is an isomorphism).
Such a model exists, in particular, if $k$ has characteristic zero
(use embedded resolution for singularities for $(\X,X_s)$ as in
\cite{temkin-resol} and apply the algorithm for semi-stable
reduction in characteristic zero \cite[II]{Kempf}).

The analytic space $\overline{\X(r)}_\eta$ is naturally
homotopy-equivalent to $|\Delta(\mY_s)|$. In particular, the
homotopy type of $|\Delta(\mY_s)|$ does not depend on the chosen
strictly semi-stable model. This result, and the one in Theorem
\ref{hom-mil}, are similar in nature to
\cite[Thm\,4.8]{thuillier}.
\end{remark}

\if false
\section{Arc spaces and trivial valuations}\label{sec-arc}
Let $k$ be any field. We put $R=k[[t]]$ (with the $t$-adic
topology) and $K=k((t))$ (with its $t$-adic valuation) as in
Section \ref{sec-hom}.

We endow $k$ with its trivial absolute value $|\cdot|_0$. By
non-Archimedean GAGA \cite[3.5]{Berk1}, we can associate to any
separated $k$-scheme of finite type $X$ a $k$-analytic space
$X^{an}$. There exists a natural morphism of locally ringed spaces
$i_X:X^{an}\rightarrow X$.

If $X=\mathrm{Spec}\,A$ is affine, the topological space
$|X^{an}|$ can be described as follows: its elements are
multiplicative semi-norms $x$ on $A$ that restrict to the the
trivial absolute value on $k$. This set $|X^{an}|$ is endowed with
the coarsest topology such that the map $|X^{an}|\rightarrow
\R_+:x\mapsto x(f)$ is continuous for each $f\in A$. The value
$x(f)$ is often denoted by $|f(x)|$ (it is the absolute value of
the image of $f$ in the residue field $\mathscr{H}(x)$ of $x$) but
we will not adopt this notation in this section. The map
$i_X:X^{an}\rightarrow X$ sends $x$ to the prime ideal $x^{-1}(0)$
in $A$.

Since the absolute value on $k$ is trivial, $i_X$ has a natural
section $s_X$ (on the underlying sets).
 For $X=\mathrm{Spec}\,A$, the section $s_X$ maps a
point $y$ in $\mathrm{Spec}\,A$ to the composition of
$A\rightarrow A/y$ with the trivial absolute value on $A/y$. Note
that $s_X$ is not continuous, in general. If $y$ is closed, then
$s_X(y)$ is the unique point of $X^{an}$ mapped to $y$ by $i_X$.
Indeed, any such semi-norm factors through the finite extension
$A/y$ of $k$, and the prolongation of $|\cdot|_0$ to $A/y$ is
unique.

We can also view $X$ as a $stft$ formal scheme over $k^o=k$, and
we denote by $X_\eta$ its generic fiber  in the sense of
\cite{berk-vanish2}. This is an analytic domain in $X^{an}$ in a
natural way; see \cite[1.10]{thuillier} where $X_\eta$ was denoted
by $X^\beth$. Moreover, $X_\eta=X^{an}$ iff $X$ is proper over
$k$. The analytic space $X_\eta$ is endowed with a natural
specialization map $sp_X:X_\eta\rightarrow X$. It is easily seen
that the image of the section $s_X:X\rightarrow X^{an}$ is
contained in $X_\eta$, and that the section $s_X$ coincides with
the map $\sigma$ from \cite[1.9]{thuillier}. If $X$ is affine, say
$X=\Spec A$, then $X_\eta$ is the $k$-affinoid space
$\mathscr{M}(A)$, where we endow $A$ with the trivial Banach norm.

The arc scheme $\mathcal{L}(X)$ associated to $X$ plays a central
role in the theory of motivic integration \cite{DLinvent}. The aim
of this section is to clarify its relation to the $k$-analytic
space $X^{an}$. For the definition of the arc scheme
$\mathcal{L}(X)$, we refer to \cite[\S\,1]{DLinvent}. We recall
that it is a separated $k$-scheme, generally not Noetherian, which
is constructed as a projective limit of the separated $k$-schemes
of finite type $\mathcal{L}_n(X)$, $n\geq 0$, where
$\mathcal{L}_n(X)$ is the Weil restriction of $X\times_k
k[t]/(t^{n+1})$ to $\Spec k$. We denote by
$\pi_n:\mathcal{L}(X)\rightarrow \mathcal{L}_n(X)$ the natural
projection, for each $n\geq 0$. Note that $\mathcal{L}_0(X)=X$.

For any field $k'$ containing $k$, there is a natural bijection
between $\mathcal{L}(X)(k')$ and $X(k'[[t]])$. An element $\psi$
of this set is called a $k'$-valued arc on $X$. The image under
$\psi:\mathrm{Spec}\,k'[[t]]\rightarrow X$ of the closed point of
$\mathrm{Spec}\,k'[[t]]$ is called the origin of the arc; it
coincides with the image of $\pi_0\circ \psi\in X(k')$. The image
of the generic point of $\mathrm{Spec}\,k'[[t]]$ is called the
generic point of the arc.

\begin{prop}
Let $X$ be a separated $k$-scheme of finite type, and denote by
$\mathcal{L}(X)$ its arc space. There exists a natural map of sets
$\Theta:\mathcal{L}(X)\times [0,1[\,\rightarrow X_\eta$, such that

(1) the composition $i_X\circ \Theta:\mathcal{L}(X)\rightarrow X$
maps a couple $(\psi,r)$ to the generic point of $\psi$ if $r>0$,
and to the origin of $\psi$ if $r=0$,

(2) the map $\Theta(\cdot,0):\mathcal{L}(X)\rightarrow X_\eta$
coincides with the composition $s_X\circ \pi_0$.

(3) for any $r\in [0,1[$ and any arc $\psi$ in $\mathcal{L}(X)$,
$(sp_X\circ \Theta)(\psi,r)$ is the origin of $\psi$.
\end{prop}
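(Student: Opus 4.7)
The plan is to construct $\Theta$ by pulling back a one-parameter family of bounded multiplicative semi-norms on the power series ring along the arc, and then to verify the three properties by direct inspection. Reduce first to the case $X=\Spec A$ affine. A point $\psi\in\mathcal{L}(X)$ over a field $k'\supseteq k$ corresponds to a $k$-algebra homomorphism $\psi^{\#}:A\to k'[[t]]$. For each $r\in[0,1[$, equip $k'[[t]]$ with the multiplicative semi-norm $|\cdot|_r$ that is trivial on $k'$ and sends $\sum_i a_i t^i$ to $\max_i|a_i|_0\,r^i$ (with the convention $0^0=1$). For $r>0$ this is the $t$-adic absolute value $r^{\mathrm{ord}_t(\cdot)}$, and for $r=0$ it factors as $k'[[t]]\twoheadrightarrow k'\to\R_+$ via the trivial absolute value on $k'$; in both cases it is multiplicative and bounded by $1$. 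Define $\Theta(\psi,r):=|\cdot|_r\circ\psi^{\#}$, which is a bounded multiplicative semi-norm on $A$ trivial on $k$, hence a point of $X_\eta=\mathscr{M}(A)$.

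Second, the construction is manifestly functorial in $X$ and independent of the field $k'$, so it glues: if $U\subset X$ is any affine open containing the origin of $\psi$ (hence also its generic point when $r>0$), then $\psi$ factors through $U$ and the semi-norm produced on $U$ is the restriction of the one on any larger affine chart. This yields a well-defined map of sets $\Theta:\mathcal{L}(X)\times[0,1[\,\to X_\eta$ for arbitrary separated $k$-schemes of finite type.

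All three properties then follow by inspection. For (1), the prime ideal $i_X(\Theta(\psi,r))$ is the kernel of $\Theta(\psi,r)$, which equals $\ker(\psi^{\#})$ for $r>0$ (the generic point of $\psi$) and $\{f\in A : \psi^{\#}(f)(0)=0\}$ for $r=0$ (the origin of $\psi$). For (2), at $r=0$ the semi-norm $\Theta(\psi,0)$ takes the value $1$ on $f\in A$ precisely when $\psi^{\#}(f)$ has nonzero constant term, matching the trivial absolute value on $A/\mathfrak{p}_{\pi_0(\psi)}$, i.e.\ $s_X(\pi_0(\psi))$. For (3), $sp_X(\Theta(\psi,r))=\{f : |\psi^{\#}(f)|_r<1\}=\{f : \psi^{\#}(f)\in(t)\}=\mathfrak{p}_{\pi_0(\psi)}$, which is the origin of $\psi$ for every $r\in[0,1[$. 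The only point requiring care is that $\Theta(\psi,r)$ lands in $X_\eta$ rather than merely in $X^{an}$, but this is automatic because $|\cdot|_r$ is bounded by $1$ on $k'[[t]]$ whenever $r<1$; no serious obstacle is encountered.
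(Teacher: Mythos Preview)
Your proposal is correct and follows essentially the same approach as the paper: reduce to the affine case $X=\Spec A$, identify a point $\psi$ of $\mathcal{L}(X)$ with a $k$-algebra map $A\to k'[[t]]$, and define $\Theta(\psi,r)$ by composing with the $t$-adic semi-norm $|\cdot|_r$ on $k'[[t]]$ (the paper writes this as $a\mapsto r^{v_t(\psi(a))}$ for $r>0$ and the evident boundary value at $r=0$). You spell out the gluing and the verification of (1)--(3) more explicitly than the paper, but the construction is identical.
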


\begin{proof}
We may assume that $X$ is affine, say $X=\mathrm{Spec}\,A$. For
any field $k'$ containing $k$, a $k'$-rational point $\psi$ on
$\mathcal{L}(X)$ corresponds canonically to a morphism of
$k$-algebras $\psi:A\rightarrow k'[[t]]$. We define
$\Theta(\psi,r)$ as follows. If $0<r<1$, then we put
$\Theta(\psi,r)(a):=r^{v_t(\psi(a))}$ for each $a\in A$, where
$v_t$ denotes the $t$-adic valuation $k'[[t]]\rightarrow
\N\cup\{\infty\}$. If $r=0$, then we put $\Theta(\psi,r)(a):=1$ if
$v_t(\psi(a))= 0$, and $\Theta(\psi,r)(a):=0$ else. Note that
$v_t(\psi(a))\neq 0$ iff $a(x)=0$, with $x$ the origin of $\psi$.
%
\end{proof}

We emphasize that the map $\Theta$ is not continuous. If $x$ is a
closed point of $X$, then $sp^{-1}_X(x)$ is an open subset of
$X_\eta$, while $(sp_X\circ \Theta)^{-1}(x)$ consists of the
couples $(\psi,r)$ with $r\in [0,1[$ and $\psi$ a point of
$\mathcal{L}(X)$ with origin $x$. This is not an open subset of
$\mathcal{L}(X)\times[0,1[$ in general. The map $\Theta$ is not
anti-continuous, either, since
$\Theta^{-1}(s_X(x))=\{\psi_x\}\times [0,1[$, with $\psi_x$ the
constant arc at $x$. However, it is easily seen that $\Theta$
becomes continuous if we endow $\mathcal{L}(X)$ with the $t$-adic
topology (i.e. the limit topology w.r.t. the discrete topology on
the spaces $\mathcal{L}_n(X)$).

The map $\Theta$ is not injective on any of the fibers of the
projection $\mathcal{L}(X)\times [0,1[\,\rightarrow [0,1[$. For
instance, if $X$ is irreducible, then all couples $(\psi,r)$ for
which the origin of $\psi$ is the generic point $\eta_X$ of $X$,
are mapped to the same point of $X_\eta$, namely to the point
$s_X(\eta_X)$. Moreover, if
$\psi:\mathrm{Spec}\,k'[[t]]\rightarrow X$ is an arc on $X$, and
$\theta$ is a $k$-automorphism of $\mathrm{Spec}\,k'[[t]]$, then
it follows immediately from the construction that
$\Theta(\psi,r)=\Theta(\psi\circ \theta,r)$ for any $r\in [0,1[$.

Things look a little better if we assume that $k$ is algebraically
closed, of characteristic zero, and if we only consider points in
$\mathcal{L}(X)(k)$.
 Restricting $\Theta$, we obtain a map
$\Theta_{k}:\mathcal{L}(X)(k)\times [0,1[\,\rightarrow X_\eta$. It
is still not continuous w.r.t. the Zariski topology on
$\mathcal{L}(X)(k)$ (by the same example as above) but we have the
following result.

\begin{prop}\label{fibers}
Assume that $k$ is algebraically closed, of characteristic zero.
If $r\in \,]0,1[$ and $\psi,\,\varphi\in \mathcal{L}(X)(k)$, then
$\Theta(\psi,r)=\Theta(\varphi,r)$ iff there exists a
$k$-automorphism $\theta$ of $\Spec k[[t]]$ such that
$\psi=\varphi\circ \theta$ (if we view $\psi$ and $\varphi$ as
arcs $\Spec k[[t]]\rightarrow X$).

Moreover, if we endow $\mathcal{L}(X)(k)$ with the $t$-adic
topology, then the map
$$\Theta_{k}:\mathcal{L}(X)(k)\times
[0,1[\,\rightarrow X_\eta$$ is continuous.
\end{prop}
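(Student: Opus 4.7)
The plan is to prove the two assertions separately.

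\emph{Fiber characterization.} The ``if'' direction is immediate: a $k$-automorphism $\theta$ of $\Spec k[[t]]$ corresponds to a $k$-algebra automorphism $\theta^*$ of $k[[t]]$, and any such automorphism preserves the $t$-adic valuation $v_t$; hence $v_t(\psi(a))=v_t((\theta^*\circ\varphi)(a))=v_t(\varphi(a))$ for every $a$, so $\Theta(\psi,r)=\Theta(\varphi,r)$.

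For the ``only if'' direction, one restricts to an affine neighborhood $X=\Spec A$ of the common origin $x_0$ of the two arcs (these origins coincide because $\Theta(\psi,r)=\Theta(\varphi,r)$ forces $v_t\psi(a)=0\Leftrightarrow v_t\varphi(a)=0$). Since $r\in\,]0,1[$ makes $n\mapsto r^n$ injective on $\{0,1,2,\dots\}\cup\{\infty\}$, the hypothesis yields $v_t\psi(a)=v_t\varphi(a)$ for all $a\in A$. Hence $\ker\psi=\ker\varphi=:\mathfrak{p}$, and both arcs factor as embeddings $A/\mathfrak{p}\hookrightarrow k[[t]]$ inducing a single discrete valuation $\nu$ on $K_\eta:=\mathrm{Frac}(A/\mathfrak{p})$. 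By continuity these extend uniquely to $k$-algebra embeddings $\widehat{\psi},\widehat{\varphi}:\widehat{K_\eta}\hookrightarrow k((t))$ preserving the rings of integers, where $\widehat{K_\eta}$ is the $\nu$-adic completion. The Cohen structure theorem, together with the fact that the residue field of $k((t))$ is $k$ (algebraically closed), identifies $\widehat{K_\eta}$ with $k((s))$ for a uniformizer $s$. Writing $\widehat{\psi}(s)=t^e u(t)$ and $\widehat{\varphi}(s)=t^e v(t)$ with $u,v\in k[[t]]^{\times}$ and $e\geq 1$ the ramification index, one extracts $e$-th roots (available because $k$ is algebraically closed of characteristic zero) to form $\alpha(t)=t\,u(t)^{1/e}$ and $\beta(t)=t\,v(t)^{1/e}$, both with nonzero linear term. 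The $k$-algebra endomorphism $\theta^*$ of $k[[t]]$ defined by $\theta^*(t):=\beta^{-1}(\alpha(t))$ (compositional inverse) is then an automorphism, and by construction $\theta^*(\widehat{\varphi}(s))=\widehat{\psi}(s)$. Since $\widehat{\psi}$ and $\widehat{\varphi}$ are determined by their value at $s$, this gives $\widehat{\psi}=\theta^*\circ\widehat{\varphi}$; restricting to $A$ yields $\psi=\theta^*\circ\varphi$ with $\theta$ the corresponding automorphism of $\Spec k[[t]]$.

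\emph{Continuity of $\Theta_k$.} One reduces again to $X=\Spec A$. By the definition of the spectral topology on $X_\eta$, it suffices to prove that for each fixed $a\in A$, the map $(\psi,r)\mapsto\Theta_k(\psi,r)(a)=r^{v_t\psi(a)}$ (with $0^0=1$) is continuous on $\mathcal{L}(X)(k)\times[0,1[$. The key observation is that $\psi\mapsto v_t\psi(a)$ is locally determined by truncations: for any $N\geq 0$, on the open neighborhood $\pi_N^{-1}(\pi_N(\psi_0))$ of $\psi_0$ one has $v_t\psi(a)=v_t\psi_0(a)$ if $v_t\psi_0(a)\leq N$, and $v_t\psi(a)\geq N+1$ otherwise. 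The verification then splits into the three cases $v_t\psi_0(a)=0$ (the function is locally $\equiv 1$), $0<v_t\psi_0(a)<\infty$ (it is locally $r^{v_t\psi_0(a)}$), and $v_t\psi_0(a)=\infty$ (the bound $r^{v_t\psi(a)}\leq r^{N+1}$, together with $r$ being bounded away from $1$ on compact subsets of $[0,1[$, yields convergence to $0$).

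The main obstacle is the construction of $\theta^*$ in part (1). Reducing to a one-dimensional problem via the Cohen structure theorem is routine, but the passage from equality of valuations to an actual automorphism of $k[[t]]$ crucially uses the extraction of $e$-th roots of units; this step genuinely requires both characteristic zero and algebraic closure of $k$, matching exactly the hypotheses of the proposition.
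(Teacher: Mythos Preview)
Your proof is correct. Both parts are handled soundly, and you correctly identify the extraction of $e$-th roots of units in $k[[t]]$ as the place where both hypotheses on $k$ are used.

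For the fiber characterization, your route differs from the paper's in execution. You pass to the completion $\widehat{K_\eta}$, invoke the Cohen structure theorem to identify it with $k((s))$, and then build $\theta^*$ explicitly by composing formal series and their compositional inverses. The paper works instead directly with $F=\mathrm{Frac}(A/\mathfrak{p})$ without completing: after picking $x\in F$ of minimal positive valuation $\gamma$ and composing each of $\varphi,\psi$ with an automorphism so that $\varphi(x)=\psi(x)=t^{\gamma}$, it shows $\varphi=\psi$ by successive approximation, peeling off one coefficient of $\varphi(y)$ at a time using the equality $v_t\circ\varphi=v_t\circ\psi$ applied to $y-b_{i_0}x^{i_0/\gamma}$. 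Your argument is more structural and packages the construction of $\theta^*$ cleanly; the paper's is more elementary and avoids Cohen's theorem entirely. Both rest on the same essential ingredient (extracting $\gamma$-th roots of units in $k[[t]]$), so the hypotheses enter at the same point.

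For continuity, the paper simply appeals to the earlier remark that $\Theta$ is continuous for the $t$-adic topology on $\mathcal{L}(X)$; your case analysis via truncation levels $\pi_N$ supplies the details that remark leaves implicit.
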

\begin{proof}
We may assume that $X$ is affine, say $X=\mathrm{Spec}\,A$.  To
prove the statement about the fibers of $\Theta_{k}(r,\cdot)$, it
suffices to show that two morphisms of $k$-algebras
$\varphi,\,\psi:A\rightarrow k[[t]]$ differ by a $k$-automorphism
of $k[[t]]$ if $v_t\circ \varphi=v_t\circ \psi$ (with $v_t$ the
$t$-adic valuation on $k[[t]]$).

The fact that for all $a\in A$, $(v_t\circ \varphi)(a)=\infty$ iff
$(v_t\circ \psi)(a)=\infty$, implies that the kernels of $\varphi$
and $\psi$ coincide; we denote this prime ideal of $A$ by $I$, and
we denote by $F$ the quotient field of $A/I$. Then $\varphi$ and
$\psi$ extend uniquely to morphisms of $k$-fields
$\varphi',\,\psi':F\rightarrow k((t))$ such that
$$v:=v_t\circ\varphi'=v_t\circ \psi'$$

There exists a unique integer $\gamma\geq 0$ such that
$v(F^*)=\gamma\cdot \Z$. The equality $\gamma=0$ can only occur if
$A/I=F=k$, so we may exclude this case. Let $x$ be an element of
$F$ such that $v(x)=\gamma$. Composing $\varphi$ with a
$k$-automorphisms of $k[[t]]$ which multiplies $t$ with a
$\gamma$-th root of $t^{-\gamma}\varphi(x)$, we may assume that
$\varphi(x)=t^\gamma$, and, likewise, that $\psi(x)=t^{\gamma}$.

Let $y$ be any element of $A/I$, and  write $\varphi(y)$ as
$\sum_{i\geq i_0}b_i t^i$ with $b_i\in k$ and $b_{i_0}\neq 0$. We
know that $i_0=v(y)$ is divisible by $\gamma$. Moreover, the fact
that
$$(v_t\circ \psi)(y-b_{i_0}x^{i_0/\gamma})=(v_t\circ
\varphi)(y-b_{i_0}x^{i_0/\gamma})=v_t(\sum_{i>i_0}b_it^i)>i_0$$
implies that $\psi(y)=\varphi(y)$ modulo $t^{i_0+1}$. Repeating
the argument with $y$ replaced by $y-b_{i_0}x^{i_0/\gamma}$ we can
conclude that $\psi(y)=\varphi(y)$.

Continuity of $\Theta_k$ follows, of course, from continuity of
$\Theta$.
\end{proof}
\begin{remark}
Proposition \ref{fibers} is false if we do not assume that $k$ is
algebraically closed: consider the $\Q$-morphisms
$\varphi,\,\psi:\Q[x]\rightarrow \Q[[t]]$ mapping $x$ to $t^2$,
resp $2t^2$. The condition that $k$ has characteristic zero is
also necessary: if $k$ is an algebraically closed field of
characteristic $p>0$, then the $k$-morphisms
$\varphi,\,\psi:k[x]\rightarrow k[[t]]$ mapping $x$ to $t^p$, resp
$t^p+t^{p+1}$ do not differ by a $k$-automorphism of $k[[t]]$,
even though they define the same $t$-adic valuation on $k[x]$.
\end{remark}

To conclude, we look at $\Theta_{k}$ from a different point of
view. Assume that $k$ is algebraically closed, of arbitrary
characteristic. Consider the $stft$ formal $R$-scheme
$\X=X\widehat{\times}_k\Spf R$. Recall that $X_\eta\cong
\X(0)_\eta$.

There are natural bijections
$$\mathcal{L}(X)(k)=X(R)=\X(R)$$ so any point
$\psi$ of $\mathcal{L}(X)(k)$ gives rise to a morphism of formal
$R$-schemes $\Spf R\rightarrow \X$ and hence to a morphism of
$k$-analytic spaces $\psi^k_\eta:\mathfrak{R}_\eta\rightarrow
\X^k_\eta$ (we denote by $\mathfrak{R}$ the special formal
$k$-scheme $\Spf R$, as before). Using the natural homeomorphism
$\mathfrak{R}_\eta\cong [0,1[$ from Lemma \ref{dischomeo}, we get
a canonical map
$$\Upsilon:\mathcal{L}(X)(k)\times [0,1[\,\rightarrow
\X^k_\eta:(\psi,r)\mapsto \psi^k_\eta(r)$$  which satisfies
$(\lambda\circ \Upsilon) (\cdot,r)=r$ for all $r\in [0,1[$ (here
$\lambda:\X^k_\eta\rightarrow [0,1[$ is the natural map of
topological spaces from Section \ref{sec-hom}).

\begin{prop} If we endow $\mathcal{L}(X)(k)$ with the $t$-adic
topology, then $\Upsilon$ is continuous, and
 the resticted map
$$\Upsilon':\mathcal{L}(X)(k)\times\,]0,1[\,\rightarrow
\lambda^{-1}(\,]0,1[\,)$$ is a homeomorphism onto
$\cup_{r\in\,]0,1[}\X(r)_\eta(K_r)$ (with the topology induced
from the one on $\X^k_\eta$).
\end{prop}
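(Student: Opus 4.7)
The plan is to verify the three standard parts: continuity of $\Upsilon$, identification of the image of $\Upsilon'$ with $\bigcup_{r\in\,]0,1[}\X(r)_\eta(K_r)$ (together with bijectivity onto it), and continuity of the inverse. Throughout I would reduce immediately to the affine case $X=\Spec A$ with $A=k[y_1,\ldots,y_n]/I$, so that $\X^k=\Spf A[[t]]$ and an arc $\psi\in\mathcal{L}(X)(k)$ is a $k$-algebra map $\psi:A\to R$ extending to $\psi^{*}:A[[t]]\to R$ by $t\mapsto t$. Unwinding the definitions shows $\Upsilon(\psi,r)(g)=p_r(\psi^{*}(g))=r^{v_t(\psi^{*}(g))}$ for $r>0$, which is (on elements of $A$) precisely the map $\Theta$ discussed earlier.

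Continuity of $\Upsilon$ then goes by the same argument already noted for $\Theta$: for a subbasic open $\{z:a<|g(z)|<b\}$ of $\X^k_\eta$ and a point $(\psi_0,r_0)$ in its preimage with $v=v_t(\psi_0^{*}(g))<\infty$, the open condition $\psi\equiv\psi_0\pmod{t^{v+1}}$ in the $t$-adic topology forces $v_t(\psi^{*}(g))=v$, and for $r$ close to $r_0$ one still has $a<r^v<b$. The cases $v=\infty$ and $r_0=0$ are handled using that $r^N\to 0$ uniformly on sets of the form $[0,r_0+\epsilon]\subset[0,1[$. For the target identification, $\Upsilon'(\psi,r)=\psi^k_\eta(p_r)$ is the image of the $K_r$-rational point $p_r\in\mathfrak{R}_\eta$ under the analytic morphism $\psi^k_\eta$, hence a $K_r$-point of $\X^k_\eta$ lying over $r$, which via Proposition~\ref{fiberhomeo} gives a $K_r$-point of $\X(r)_\eta$. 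Conversely a $K_r$-point of $\X(r)_\eta$ is the same as an $R$-valued point of $\X$, and, since $\X=X\widehat{\times}_k\Spf R$, this is precisely a $k$-morphism $A\to R$, i.e.\ an arc, producing an inverse to $\Upsilon'$.

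For continuity of $\Upsilon'^{-1}$, take $z_0=\Upsilon'(\psi_0,r_0)$ and a basic open $U_N(\psi_0)\times(r_0-\epsilon,r_0+\epsilon)$ of $\mathcal{L}(X)(k)\times\,]0,1[$, where $U_N(\psi_0)=\{\psi:\psi\equiv\psi_0\bmod t^{N+1}\}$. Writing $\psi_0(y_j)=\sum_i a_i^{(0,j)}t^i$ and $P_j^{(0)}=\sum_{i\leq N}a_i^{(0,j)}t^i$, the congruence $\psi_z\equiv\psi_0\bmod t^{N+1}$ at a $K_r$-point $z$ translates into $|y_j-P_j^{(0)}|(z)\leq r^{N+1}$ where $r=\lambda(z)$. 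Since $r_0<1$, for $\delta>0$ sufficiently small one has $r_0^{N+1}<(r_0-\delta)^N$, so we may pick $b$ with $r_0^{N+1}<b<(r_0-\delta)^N$ and set
\[
V:=\bigl\{z\in\X^k_\eta:\lambda(z)\in(r_0-\delta,r_0+\delta),\ |y_j-P_j^{(0)}|(z)<b\text{ for all }j\bigr\}.
\]
This is an open neighborhood of $z_0$ in $\X^k_\eta$, and for any $z\in V\cap\bigcup_r\X(r)_\eta(K_r)$ the inequality $|y_j-P_j^{(0)}|(z)<b\leq(r_0-\delta)^N\leq r^N$ forces $v_t(\psi_z(y_j)-P_j^{(0)})\geq N+1$, proving $V\cap\bigcup_r\X(r)_\eta(K_r)\subseteq\Upsilon'(U_N(\psi_0)\times(r_0-\delta,r_0+\delta))$.

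The main obstacle, isolated in the last step, is that the congruence $\psi_z\equiv\psi_0\bmod t^{N+1}$ translates into the \emph{non-uniform} estimate $|\cdot|(z)\leq r^{N+1}$, with the bound depending on the point $z$ itself via $r=\lambda(z)$. The calibration of $\delta$ and $b$ above, exploiting $r_0<1$ and the continuity of $\lambda$, is precisely what replaces this by a single open condition in $\X^k_\eta$ and makes the argument work.
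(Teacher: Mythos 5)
Your argument is correct, but in the crucial final step --- continuity of $(\Upsilon')^{-1}$ --- it takes a genuinely different route from the paper's. The paper's proof simply factors $\Upsilon'$ as $\phi^{-1}\circ(\beta_r\times\mathrm{id})$, where $\phi:\lambda^{-1}(\,]0,1[\,)\to|\X(r)_\eta|\times\,]0,1[$ is the homeomorphism from Proposition \ref{fiberhomeo} and $\beta_r:\mathcal{L}(X)(k)\to\X(r)_\eta(K_r)$ is the single-radius bijection, which the paper asserts (without details) to be a homeomorphism; the global statement then drops out at once from the already-established product structure of $\lambda^{-1}(\,]0,1[\,)$. You instead verify the continuity of $(\Upsilon')^{-1}$ directly by an explicit calibration: given a basic neighbourhood $U_N(\psi_0)\times(r_0-\epsilon,r_0+\epsilon)$, you exploit $r_0<1$ to squeeze a threshold $b$ between $r_0^{N+1}$ and $(r_0-\delta)^N$ and build an honest open $V$ of $\X^k_\eta$ cut out by $\lambda(z)\in(r_0-\delta,r_0+\delta)$ and $|y_j-P_j^{(0)}|(z)<b$, which forces the $t$-adic congruence $\psi_z\equiv\psi_0\bmod t^{N+1}$ on the rational points. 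The paper's route is shorter and structural, but hides the real content inside the phrase ``one verifies in a straightforward way that $\beta_r$ is a homeomorphism'' together with a reuse of Proposition \ref{fiberhomeo}; your route does not invoke Proposition \ref{fiberhomeo} for this step at all, is fully self-contained, and isolates exactly the analytic subtlety (the $t$-adic congruence translates to a bound depending on $r=\lambda(z)$, uniformized on a $\lambda$-slab) that the paper's formulation sweeps under the rug. The continuity-of-$\Upsilon$ and image-identification parts of your argument coincide with the paper's.
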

\begin{proof}
To prove continuity of $\Upsilon$ we may assume that $X$ is
affine, say $X=\Spec A$. Since $\X^k_\eta$ is a subspace of the
affinoid space $\mathscr{M}(A[t])$ (where $A[t]$ carries the
trivial Banach norm) it suffices to show that, for any $f\in
A[t]$, the map
$$\mathcal{L}(X)\times [0,1[\,\rightarrow \R:(\psi,r)\mapsto
|f(\psi^k_\eta(r))|$$ is continuous. Viewing $\psi$ as a
$k$-morphism $A\rightarrow k[[t]]$, the right hand side is nothing
but $p_r(\psi^*f)$ where $\psi^*f$ is the image of $f$ under the
morphism of $A$-algebras $A[t]\rightarrow k[[t]]$ which sends $t$
to itself, and $p_r(\sum_{i\geq 0}a_i t^i)=\max |a_i|_0 r^i$ with
the convention that $0^0=1$. In other words,
$$|f(\psi^k_\eta(r))|=r^{v_t(\psi^*f)}$$ which is clearly
continuous as a function in $(\psi,r)$.

 For each $r\in\,]0,1[$ the canonical bijection
$\X(R)=\X(r)_\eta(K_r)$ defines a bijection
$\beta_r:\mathcal{L}(X)(k)\rightarrow \X(r)_\eta(K_r)$, and one
verifies in a straightforward way that this is a homeomorphism.
Taking the product with the identity map on $]0,1[$ we get a
homeomorphism
$$\beta_r\times id:\mathcal{L}(X)(k)\times\,]0,1[\,\rightarrow \X(r)_\eta(K_r)\times \,]0,1[$$
whose composition with the homeomorphism $\phi$ from Proposition
\ref{fiberhomeo} coincides with $\Upsilon'$.
\end{proof}

Since taking generic fibers commutes with fiber products, there
exists a natural isomorphism of $k$-analytic spaces
$\X^k_\eta\cong X_\eta\times_k \mathfrak{R}_\eta$. The projection
to the first factor defines a morphism $p_1:\X^k_\eta\rightarrow
X_\eta$, and $\Theta_k=p_1\circ \Upsilon$.

%
\fi
\section{Weight zero part of the mixed Hodge structure on the
nearby cohomology}\label{sec-mhs}
\subsection{Cocubical systems}
We recall the following definition. For any finite, non-empty set
$S$, we denote by $\square_S$ the set of \textit{non-empty}
subsets of $S$, ordered by inclusion. For any category
$\mathcal{C}$, the category of $S$-cocubical systems in
$\mathcal{C}$ is the category of covariant functors
$\square_S\rightarrow \mathcal{C}$ (with natural transformations
as morphisms).

Let $\mathcal{A}$ be an abelian category, and denote by
$C^+(\mathcal{A})$ the category of bounded below complexes in
$\mathcal{A}$. We fix an integer $n\geq 0$, and we consider a
$[n]$-cocubical system $(C^\bullet_L)_{L\in \square_{[n]}}$ in
$C^+(\mathcal{A})$. For each object $L$ in $\square_{[n]}$ and
each $p\in \Z$, we denote by $d_L:C^p_L\rightarrow C^{p+1}_L$ the
differential in the complex $C^\bullet_L\in C^+(\mathcal{A})$. For
each couple $(L,L')$ of objects in $\square_{[n]}$ with $L\subset
L'$, we denote by
$$\delta^{L}_{L'}:C^\bullet_{L}\rightarrow C^{\bullet}_{L'}$$ the
face map which is part of the cocubical system.
 We define the associated
simple complex $s_{\square}((C_L^\bullet)_{L\in \square_{[n]}})$
as follows (we use the notation $s_\square$ to avoid confusion
with the simple complex $s(\cdot)$ associated to a double
complex).

First, we define a double complex $A^{\bullet\bullet}$. We put
$$A^{p,q}=\left\{ \begin{array}{ll}0&\mbox{ for }(p,q)\in \Z\times \Z_{<0}
\\ \oplus_{|L|=q+1} C^p_L&\mbox{ for }(p,q)\in \Z\times
\Z_{\geq 0}\end{array}\right.$$ The horizontal differential
$A^{p,q}\rightarrow A^{p+1,q}$, for $q\geq 0$, is given by
$$\oplus_{|L|=q+1}\{d_L:C^p_L\rightarrow C^{p+1}_L\} $$
The restriction of the vertical differential $A^{p,q}\rightarrow
A^{p,q+1}$ to the component $C^p_L$ is given by $$C^p_L\rightarrow
A^{p,q+1}:\sum_{i\in [n]\setminus
L}(-1)^{\varepsilon(L,i)+p+1}\delta_{L\cup\{i\}}^L$$ where
$\varepsilon(L,i)$ denotes the number of elements $j$ in $L$ with
$j<i$.

 We define $s_{\square}((C^\bullet_L)_{L\in \square_{[n]}})$ as the associated simple complex
$s(A^{\bullet \bullet})$; we'll also denote it by
$s_{\square}(C^\bullet_L)$ to simplify notation. A map
$f_L:C^\bullet_L\rightarrow D^\bullet_L$ of $[n]$-cocubical
systems in $C^+(\mathcal{A})$ induces a map
$s_{\square}(f_L):s_{\square}(C^\bullet_L)\rightarrow
s_{\square}(D^\bullet_L)$ between the associated simple complexes,
so we get a functor $s_{\square}(\cdot)$ from the category of
$[n]$-cocubical systems in $C^+(\mathcal{A})$ to the category
$C^+(\mathcal{A})$.

Denote by $C^+(\mathcal{A},W,F)$ the category of bifiltered
bounded below complexes in $\mathcal{A}$ (with $F$ decreasing and
$W$ increasing). If $(C_L^\bullet,W,F)_{L\in \square_{[n]}}$ is a
$[n]$-cocubical system
 in $C^+(\mathcal{A},W,F)$ then
we endow the simple complex $C^\bullet=s_{\square}(C^\bullet_L)$
with the following filtrations: for each $n,\,r\in \Z$, we put
\begin{eqnarray*}
F^rC^n&=&\oplus_{p+q=n,\,q\geq 0}\oplus_{|L|=q+1}F^rC^p_L
\\ W_r C^n&=&\oplus_{p+q=n,\,q\geq 0}\oplus_{|L|=q+1}W_{r+q}C^p_L
\end{eqnarray*}
We call the bifiltered complex $(s_{\square}(C^\bullet_L),W,F)$
the associated simple complex of the cocubical system
$(C_L^\bullet,W,F)_{L\in \square_{[n]}}$, and we put
$$s_{\square}(C_L^\bullet,W,F)=(s_{\square}(C^\bullet_L),W,F)$$
This defines a functor $s_{\square}$ from the category of
$[n]$-cocubical systems in $C^+(\mathcal{A},W,F)$ to the category
$C^+(\mathcal{A},W,F)$.
%

Let us consider some elementary examples, which will be of use
later on.
\begin{example}\label{nc}
Let $X$ be a smooth complex variety and let $E$ be a proper strict
normal crossing divisor on $X$, with irreducible components
$E_i,\,i=0,\ldots,n$. For any $L\in \square_{[n]}$, we put
$E_L=\cap_{i\in L}E_i$ and denote by $a_L:E_L\rightarrow X$ the
inclusion. Consider the bifiltered complex
$\mathcal{H}_L=((a_L)_*\Omega^\bullet_{E_L},W,F)$ where  $F$ is
the stupid filtration, and $W$ is given by
$$ W_i((a_L)_*\Omega_{E_L}^{\bullet})=\left\{\begin{array}{l} 0 \mbox{ for }
i<0
\\ {}
\\ (a_L)_*\Omega_{E_L}^{\bullet}
\mbox{ for } i\geq 0
\end{array}\right.$$
In other words, $\mathcal{H}_L$ is the image under $(a_L)_*$ of
the complex component $Hdg^\bullet(E_L)_\C$ of the mixed Hodge
complex of sheaves associated to the smooth and proper complex
variety $E_L$. If $L'$ is another subset of $[n]$ and $L\subset
L'$, then the closed immersion $E_{L'}\rightarrow E_L$ induces
restriction maps $\mathcal{H}_L\rightarrow \mathcal{H}_{L'}$,
which make $(\mathcal{H}_L)_{L\in \square_{[n]}}$ into a
$[n]$-cocubical system. We denote its associated simple complex by
$Hdg^\bullet(E)_{\C}$. It is the complex component
 of a mixed Hodge complex of sheaves which induces the canonical
 mixed Hodge structure on $H^*(E,\Z)$
 \cite[3.5]{steenbrink-limit}.
\end{example}

\begin{example}\label{pieces}
Let $X$ be any topological space, and consider a cover
$\{E_i\,|\,i\in [n]\}$ of $X$ by closed subsets. For any $L\in
\square_{[n]}$, we put $E_L=\cap_{i\in L}E_i$ and denote by
$a_L:E_L\rightarrow X$ the inclusion.

 Let $G^\bullet$
be any object in $C^+(X)$ (i.e. a bounded below complex of abelian
sheaves on $X$), and consider the $[n]$-cocubical system defined
by $G^\bullet_L=(a_L)_*(a_L)^*G^\bullet$. By adjunction, we have a
natural map of complexes
$$G^\bullet\rightarrow \oplus_{i\in [n]}(a_i)_*(a_i)^*G^\bullet$$
where we wrote $a_i$ instead of $a_{\{i\}}$. The target of this
map is a direct summand of $s_{\square}(G^\bullet_L)$, so we get a
map of complexes $G^\bullet\rightarrow s_{\square}(G^\bullet_L)$.
This is a quasi-isomorphism, since for each $q\geq 0$, the $q$-th
row of the double complex $A^{\bullet \bullet}$ associated to the
cocubical system $G^\bullet_L$ is a resolution of $G^q$. If
$f:G^\bullet \rightarrow H^\bullet$ is a morphism in $C^+(X)$,
then $f$ induces a morphism of cocubical systems
$f_L:G_L^\bullet\rightarrow H^\bullet_L$, and the square
$$\begin{CD}
G^\bullet @>f>> H^\bullet \\@VVV @VVV
\\s_{\square}(G^\bullet_L)@>s_{\square}(f_L)>> s_{\square}(H^\bullet_L)
\end{CD}$$
commutes.
%
\end{example}

\begin{example}\label{diffpieces}
We keep the notations of Example \ref{pieces}, supposing moreover
that $X$ is a smooth complex variety, and that $E_L$ is a smooth
closed subvariety for each $L\in \square_{[n]}$. We consider the
$[n]$-cocubical system of complexes
$$((a_L)_*\Omega^\bullet_{E_L})_{L\in \square_{[n]}}$$ in
$C^+(X,\C)$.
 The product of restriction maps
$\Omega^\bullet_X\rightarrow \oplus_{i\in
[n]}(a_i)_*\Omega^\bullet_{E_i}$ induces a map of complexes
$\Omega_X^\bullet\rightarrow
s_{\square}((a_L)_*\Omega^\bullet_{E_L})$. This map is a
quasi-isomorphism: via the quasi-isomorphisms
$\Omega^\bullet_X\cong \C_X$ and $\Omega^\bullet_X\cong \C_{E_L}$
and the exactness of the functor
$(a_L)_*:C^+(E_L,\C)\rightarrow C^+(X,\C)$, we
recover the situation of Example \ref{pieces}, with
$G^\bullet=\C_X$.
\end{example}

\subsection{Localized limit mixed Hodge complex}\label{sec-loclim}
Let $S$ be the open complex unit disc, and let $f:X\rightarrow S$
be a projective morphism, with $X$ a complex manifold. We fix a
complex coordinate $t$ on $S$. Assume that the special fiber $X_s$
is a reduced strict normal crossing divisor $E=\sum_{i\in I}E_i$.
We fix a total order on $I$, i.e. a bijection $I\cong [a]$ for
some integer $a\geq 0$. Let $J$ be a non-empty subset of $I$, put
$E(J)=\cup_{i\in J}E_i$, and denote by $v_J:E(J)\rightarrow X_s$
the inclusion map. The total order on $I$ induces a total order on
$J$, i.e. a bijection $J\cong [b]$ for some integer $b\geq 0$.
These total orders are necessary to apply the functor
$s_{\square}$ to $I$- and $J$-cocubical systems.

In \cite{navarro-invent} (see also \cite{navarro-survey}), Navarro
Aznar constructed a localized limit integral mixed Hodge complex
of sheaves on $E(J)$, whose integral component is quasi-isomorphic
to $v_J^*R\psi_{f}(\Z)$ (here $R\psi_f$ is the complex analytic
nearby cycle functor associated to $f$). This mixed Hodge complex
induces a canonical integral mixed Hodge structure on the
hypercohomology spaces
$$\mathbb{H}^i(E(J),v_J^* R\psi_{f}(\Z))$$

We will follow the approach in \cite[\S\,12.1]{peters-steenbrink}.
We briefly recall the definition of the complex component
$v_J^*\psi^H_f(\C)$ of the localized limit integral mixed Hodge
complex of sheaves $v_J^*\psi^{H}_f$ on $E(J)$, in order to fix
notations (the notations we adopt here differ slightly from the
ones in \cite[\S\,12.1]{peters-steenbrink}).

For any non-empty subset $L$ of $J$, we denote by
$\mathcal{I}_{E_L}$ the defining ideal sheaf of $E_L=\cap_{i\in
L}E_i$ in $X$. Then $\mathcal{I}_{E_L}\Omega^\bullet_{X}(\log E)$
is a subcomplex of $\Omega^\bullet_X(\log E)$, and we consider the
bifiltered complex
$$\Omega^\bullet_{X}(\log E)|_{E_L}=\Omega^\bullet_X(\log
E)/\mathcal{I}_{E_L}\Omega^\bullet_X(\log E)$$ in $C^+(E,\C)$,
endowed with the quotient filtrations $W$ and $F$ (the quotient of
the weight filtration, resp. of the stupid filtration on
$\Omega^\bullet_{X}(\log E)$). We define a double complex
$A^{\bullet \bullet}_L$ by
$$A^{p,q}_L=\Omega^{p+q+1}_{X}(\log E)|_{E_L}/W_p(\Omega^{p+q+1}_{X}(\log E)|_{E_L})$$ for $p,q\geq
0$ and
 we consider $A^{\bullet\bullet}_L$ as a sheaf on $E(J)$.
The
differentials $d'_L$ and $d''_L$ are given by
\begin{eqnarray*}
d'_L&:&A^{p,q}_L\rightarrow A^{p+1,q}_L:\omega \mapsto
(dt/t)\wedge\omega
\\ d''_L&:& A^{p,q}_L\rightarrow A^{p,q+1}_L:\omega \mapsto d\omega
\end{eqnarray*}
We put an increasing filtration $W(M)$ (the \textit{monodromy
weight filtration}) on $A^{\bullet\bullet}_L$ by defining
$W(M)_rA^{p,q}_L$ as the image of $W_{r+2p+1}\Omega^{p+q+1}_X(\log
E)$ in $A^{p,q}_L$; this induces a filtration $W(M)$ on the
associated simple complex $C^\bullet_L=s(A^{\bullet \bullet}_L)$.
We also endow $C^\bullet_L$ with a decreasing filtration $F$ by
putting $F^rC^n_L=\oplus_{p+q=n,q\geq r}A_L^{p,q}$.

When $L$ varies over the non-empty subsets of $J$,
the bifiltered complexes $(C_L^\bullet,W(M),F)$ form a
$J$-cocubical complex of bifiltered objects in $C^+(E(J),\C)$,
whose associated simple complex $s_{\square}(C_L^\bullet,W(M),F)$
is denoted by $v_J^*\psi^H_f(\C)$. The complex
$s_{\square}(C_L^\bullet)$ is quasi-isomorphic to
$v_J^*R\psi_f(\C)$.

\subsection{The specialization map on the mixed Hodge level}
We keep the notations of Section \ref{sec-loclim}. We would like
to lift the natural specialization map $\C_{E(J)}\rightarrow
v_J^*R\psi_f(\C)$ to a map of bifiltered complexes
$$Hdg^\bullet(E(J))_{\C}\rightarrow v_J^*\psi_f^H(\C)$$ (here
$Hdg^\bullet(E(J))_{\C}$ is the complex introduced in Example
\ref{nc}). In the global case $J=I$, this was done in
\cite[\S\,11.3.1]{peters-steenbrink}.

For any non-empty subset $M$ of $I$, we denote by $a_M$ the closed
immersion $E_M\rightarrow E$. In the proof of
\cite[Thm.\,6.12]{peters-steenbrink}, the
 following property was shown.
\begin{lemma}\label{res}
There is a commutative diagram
 $$\begin{CD}
  Gr^W_m(\Omega^\bullet_X(\log
E))@>>\cong > \bigoplus_{\stackrel{M\subset
I}{|M|=m}}(a_M)_*\Omega^\bullet_{E_{M}}[-m]
\\ @VVV @VVV
\\ Gr^W_m(\Omega^\bullet_X(\log E)|_{E_L})@>\cong >> \bigoplus_{\stackrel{M\subset
I}{|M|=m}} (a_{L\cup M})_*\Omega^\bullet_{E_{L\cup
M}}[-m]\end{CD}$$ for any couple non-empty subset $L$ of $I$ and
any integer $m\geq 1$. Here the upper horizontal map is the
isomorphism induced by the residue map, the left vertical arrow
comes from the natural projection $\Omega^\bullet_X(\log
E)\rightarrow \Omega^\bullet_X(\log E)|_{E_L}$, and the right
vertical arrow is the obvious restriction map.
\end{lemma}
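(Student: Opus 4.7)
The statement is local on $X$, and both rows of the diagram are built from local constructions (Poincar\'e residue, restriction mod ideal sheaves) that commute with shrinking $X$. The plan is therefore to reduce to a coordinate chart, carry out the residue computation explicitly on the standard local generators of $W_\bullet \Omega^\bullet_X(\log E)$, and observe that both paths around the square produce the same form.

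Concretely, I would pick local coordinates $(z_1,\ldots,z_n)$ on an open $U\subset X$ such that $E\cap U$ is cut out by $z_1\cdots z_p=0$, with $E_i\cap U=\{z_i=0\}$ for $i=1,\ldots,p$. Then $W_m\Omega^k_X(\log E)|_U$ is locally generated, as an $\mathcal{O}_U$-module, by forms of the shape
\[
\eta \;=\; \omega\wedge\frac{dz_{i_1}}{z_{i_1}}\wedge\cdots\wedge\frac{dz_{i_s}}{z_{i_s}}
\]
with $s\le m$ and $\omega$ a holomorphic $(k-s)$-form. Writing $M=\{i_1<\cdots<i_m\}\subset\{1,\ldots,p\}$, the Poincar\'e residue
\[
\mathrm{Res}_M:\; Gr^W_m\Omega^k_X(\log E)\;\longrightarrow\;(a_M)_*\Omega^{k-m}_{E_M}
\]
sends the class of such an $\eta$ (with $s=m$) to $\omega|_{E_M}$ in the $M$-th summand, up to the usual sign. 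Composing with the right vertical restriction $\Omega^\bullet_{E_M}\to\Omega^\bullet_{E_{L\cup M}}$, the top-then-right path sends $[\eta]$ to $\omega|_{E_{L\cup M}}$.

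For the other path the key step, and I expect this to be the main technical point, is to verify that the Poincar\'e residue descends through the quotient by $\mathcal{I}_{E_L}$ and realises the bottom horizontal isomorphism. For $j\in L$ and $\eta$ as above, multiplying by $z_j$ gives two cases: if $j\in M$, then $z_j\,dz_j/z_j = dz_j$, so $z_j\eta\in W_{m-1}\Omega^\bullet_X(\log E)$ and vanishes in $Gr^W_m$; if $j\notin M$, then $z_j\eta=(z_j\omega)\wedge dz_M/z_M$, whose residue $(z_j\omega)|_{E_M}$ lies in $\mathcal{I}_{E_{\{j\}\cup M}|_{E_M}}\cdot\Omega^\bullet_{E_M}$, hence maps to $0$ under restriction to $E_{L\cup M}$. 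Thus the composition $\mathrm{Res}_M$ followed by restriction to $E_{L\cup M}$ kills $\mathcal{I}_{E_L}\cdot W_m\Omega^\bullet_X(\log E)$, and so induces a well-defined morphism
\[
Gr^W_m(\Omega^\bullet_X(\log E)|_{E_L})\;\longrightarrow\;\bigoplus_{|M|=m}(a_{L\cup M})_*\Omega^\bullet_{E_{L\cup M}}[-m].
\]
A direct check in the local basis shows this is an isomorphism; this is what one must take to be the bottom horizontal map (it is then automatic that it agrees with the residue description in \cite{peters-steenbrink}).

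With the bottom arrow so identified, commutativity of the square is now tautological on the generators $\eta$: both routes send $[\eta]$ to $\omega|_{E_{L\cup M}}$ in the $M$-th summand. Since $W_m\Omega^\bullet_X(\log E)$ is locally generated by such $\eta$ and all maps involved are $\mathcal{O}_X$-linear and compatible with the quotient to $Gr^W_m$, this suffices. The main obstacle, as indicated, is the well-definedness and explicit identification of the bottom horizontal residue map; once that is dispatched by the local calculation above, the commutativity drops out.
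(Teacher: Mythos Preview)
Your proof is correct and is precisely the kind of local Poincar\'e--residue computation that underlies the reference the paper gives; the paper itself does not prove the lemma but simply cites \cite[Thm.\,6.12]{peters-steenbrink}, so you have effectively unpacked that citation. The crucial step you isolate---checking that $\mathrm{Res}_M$ followed by restriction to $E_{L\cup M}$ kills $\mathcal{I}_{E_L}\cdot W_m\Omega^\bullet_X(\log E)$ via the two cases $j\in M$ and $j\notin M$---is exactly right and is the substance of the argument.
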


By Example \ref{diffpieces} (applied to $X=E_L$ and the cover
$\{E_{L\cup i}\,|\,i\in I\}$), we have for each non-empty subset
$L$ of $J$ a quasi-isomorphism
$$(a_L)_*\Omega^\bullet_{E_L}\cong \oplus_{p+q=\bullet,\,p\geq 0}\oplus_{M\subset I,|M|=p+1}(a_{L\cup M})_*\Omega^q_{E_{L\cup
M}} $$ and using the isomorphism in Lemma \ref{res}, we get a
natural quasi-isomorphism
$$(a_L)_*\Omega^\bullet_{E_L}\rightarrow \oplus_{p+q=\bullet,\,p\geq 0} Gr^W_{p+1}(\Omega^{p+q+1}_X(\log E)|_{E_L})$$
whence a morphism of $J$-cocubical systems of bifiltered complexes
\begin{equation}\label{s_L}
\sigma_L:((a_L)_*\Omega^\bullet_{E_L},W,F)\rightarrow
(C_L^\bullet,W(M),F)\end{equation}
 where the left hand side is
defined as in Example \ref{nc}.
 Passing to the
associated simple complexes, we get a map of bifiltered complexes
\begin{equation}\label{s}
\sigma:Hdg^\bullet(E(J))_{\C}\rightarrow
v_J^*\psi^H_f(\C)\end{equation}

\begin{lemma}
The square
$$\begin{CD}
\C_{E(J)}@>spec>> v_J^*R\psi_f(\C)
\\ @VVV @VVV
\\ Hdg^\bullet(E(J))_{\C}@>\sigma >>v_J^*\psi_f^H(\C)
\end{CD}$$ commutes in $D^+(E(J),\C)$. The upper horizontal map is
the natural specialization map, and the vertical maps are the
natural comparison isomorphisms in $D^+(E(J),\C)$ which are part
of the mixed Hodge complexes $Hdg^\bullet (E(J))$, resp.
$v_J^*\psi_f^H$.
\end{lemma}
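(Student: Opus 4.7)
The plan is to reduce the commutativity in $D^+(E(J),\C)$ to the already known case $J=I$, treated in \cite[\S\,11.3.1]{peters-steenbrink}, by exploiting the $J$-cocubical structure that underlies all four objects in the square.

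First, I would re-express each of the four complexes as the simple complex of a $J$-cocubical system and identify the arrows accordingly. By construction, $v_J^*\psi^H_f(\C)=s_{\square}(C_L^\bullet)$ and $Hdg^\bullet(E(J))_{\C}=s_{\square}((a_L)_*\Omega^\bullet_{E_L})$, and the morphisms $\sigma_L$ of (\ref{s_L}) assemble into a morphism of $J$-cocubical systems whose associated simple complex is precisely $\sigma$. For the upper row, Example \ref{pieces} applied to the closed cover $\{E_i\cap E(J)\,|\,i\in J\}$ of $E(J)$ yields quasi-isomorphisms $\C_{E(J)}\to s_{\square}((a_L)_*\C_{E_L})$ and $v_J^*R\psi_f(\C)\to s_{\square}((a_L)_*v_L^*R\psi_f(\C))$, and the naturality statement at the end of Example \ref{pieces} realizes $spec$ as the simple complex of the term-wise specialization maps.

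Second, I would verify commutativity at each level $L\in \square_J$ separately. This reduces to a square on the smooth closed subvariety $E_L$ comparing the classical specialization map $\C_{E_L}\to v_L^*R\psi_f(\C)$ with the residue-based map coming from the logarithmic de Rham complex restricted to $E_L$. Combining Lemma \ref{res} with Example \ref{diffpieces} expresses the left vertical map of this level-wise square as the resolution of $(a_L)_*\Omega^\bullet_{E_L}$ by the direct summands appearing in the residue isomorphism, while the right vertical map is built from the very same residue isomorphism inside $C_L^\bullet$. Pulling back the global Peters--Steenbrink square along the smooth closed immersion $E_L\hookrightarrow X$ and using functoriality of $R\psi_f$ under closed immersions then gives the required level-wise commutativity.

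Third, I would check that these level-wise commutativity data are compatible with the face maps $\delta^L_{L'}$ of the cocubical systems, so that the collection of level-wise commutative squares yields, after applying the functor $s_{\square}$, a commutative square in $D^+(E(J),\C)$. I expect this final compatibility check to be the main obstacle: it requires tracking how the residue isomorphism of Lemma \ref{res} intertwines the natural restriction $(a_L)_*\Omega^\bullet_{E_L}\to (a_{L'})_*\Omega^\bullet_{E_{L'}}$ with the face map $C_L^\bullet\to C_{L'}^\bullet$, and in addition confirming that the signs and the weight reindexing $W_r\mapsto W_{r+q}$ built into the definition of $s_{\square}$ are respected. Once this is settled, the commutativity of the four-term square in $D^+(E(J),\C)$ follows, since by construction the vertical comparison quasi-isomorphisms of the mixed Hodge complexes $Hdg^\bullet(E(J))$ and $v_J^*\psi_f^H$ are themselves obtained by assembling their defining cocubical systems via $s_{\square}$.
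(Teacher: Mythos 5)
Your plan follows the same overall route as the paper's proof: start from the global $J=I$ commutativity established by Peters--Steenbrink, pull back to each $E_L$, invoke Lemma \ref{res} for the local comparison, and assemble with $s_{\square}$ via Example \ref{pieces}. Steps 1 and 2 are therefore in the right spirit (modulo the minor imprecision that the paper applies $a_L^*$ to the square over $E$ rather than pulling back along $E_L\hookrightarrow X$; no special ``functoriality of $R\psi_f$ under closed immersions'' is used, only that $a_L^*$ preserves commutative squares in $D^+$).

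The decisive issue is your step 3, which you correctly flag as the main obstacle but then leave open. The difficulty is genuine: knowing that each level-wise square commutes in $D^+(E_L,\C)$ does not automatically yield a morphism of $J$-cocubical systems whose $s_{\square}$ is the desired comparison, since derived-category quasi-isomorphisms at each level $L$ need not be realized by a single chain-level map compatible with the face maps (this is a homotopy-coherence problem). The paper sidesteps this entirely by making the comparison a concrete chain map from the outset: it defines $\delta_L\colon a_L^*\psi^H_f(\C)\to a_L^*C^\bullet_L$ as the map induced by the canonical quotient $\Omega^\bullet_X(\log E)/W_0\to \Omega^\bullet_X(\log E)|_{E_L}/W_0$. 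Because $\delta_L$ arises from a natural projection, it is compatible with the face maps on the nose, so $s_{\square}(\delta_L)$ is the natural comparison isomorphism with no further argument; what remains is the level-wise identity $\sigma_L\circ\gamma_L=\delta_L\circ\beta_L$, which is deduced from Lemma \ref{res}, together with the observation that $\delta_L'$ is a filtered quasi-isomorphism. To close the gap in your proposal you should replace the informal ``track compatibility'' instruction by this explicit construction of $\delta_L$; as written, step 3 names the problem without resolving it.
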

\begin{proof}
In the global case $I=J$, Peters and Steenbrink defined a limit
integral mixed Hodge complex of sheaves $\psi^{H}_f$ on $E$ (see
\cite[11.2.7]{peters-steenbrink}); we denote by $\psi^H_f(\C)$ its
complex component. The complex $\psi^H_f(\C)$ is defined as the
simple complex associated to the double complex
$$A^{p,q}=\Omega^{p+q+1}(\log E)/W_p\Omega^{p+q+1}(\log E)$$
where the differentials are defined in the same way as for
$A^{\bullet \bullet}_L$. By \cite[11.3.1]{peters-steenbrink}, we
have the following commutative diagram in $D^+(E,\C)$\,:
$$\begin{CD}
\C_E@>spec>> R\psi_f(\C)
\\ @VVV @VVV
\\Hdg^\bullet(E)_{\C}@>\beta>> \psi^H_f(\C)
\end{CD}$$
 The vertical maps are the natural comparison isomorphisms, and
$\beta$ is constructed by means of the residue maps:
$$\bigoplus_{|L|=p+1}(a_L)_*\Omega^q_{E_L}\cong Gr^W_{p+1}\Omega^{p+q+1}(\log
E)\rightarrow A^{p,q}=\Omega^{p+q+1}(\log
E)/W_p(\Omega^{p+q+1}(\log E))$$
 Applying the functor $a_L^*:D^+(E,\C)\rightarrow D^+(E_L,\C)$, for any non-empty subset $L$ of
$J$, we get a commutative diagram in $D^+(E_L,\C)$\,:
$$\begin{CD}
\C_{E_L}@>spec>> a_L^*R\psi_f(\C)
\\ @VVV @VVV
\\a_L^*(Hdg^\bullet(E)_{\C})@>\beta_L>> a_L^*\psi^H_f(\C)
\end{CD}$$
We denote by $b_L$ the closed immersion $E_L\rightarrow E(J)$, and
we define an isomorphism
$\gamma_L:a_L^*(Hdg^\bullet(E)_{\C})\rightarrow
\Omega^\bullet_{E_L}$ in $D^+(E_L,\C)$ as the composition of the
natural isomorphisms
$$a_L^*(Hdg^\bullet(E)_{\C})\cong a_L^*\C_{E}\cong \C_{E_L}\cong
\Omega^\bullet_{E_L}$$ in $D^+(E_L,\C)$.

In view of Example \ref{pieces}, it suffices to prove the
following claim: \textit{there exists a commutative diagram
$$\begin{CD}
a_L^*(Hdg^\bullet(E)_{\C})@>\beta_L>> a_L^*\psi^H_f(\C)
\\ @V\gamma_L VV @VV\delta_L V
\\\Omega^\bullet_{E_L}@>\sigma_L>> a_L^*C_L^\bullet
\end{CD}$$ in $D^+(E_L,\C)$, where the vertical arrows are isomorphisms, $\sigma_L$ is the map in (\ref{s_L}), and
$\delta_L$ is a morphism in $C^+(E_L,\C)$ such that the
composition
$$\begin{CD}
v_J^*R\psi_{f}(\C)\cong
s_{\square}(((b_L)_*a_L^*\psi_f^H(\C))_{L\in \square_{J}})
@>s_{\square}(\delta_L)>> s_{\square}((C_L^{\bullet})_{L\in
\square_J})= v_J^*\psi^H_f(\C)\end{CD}$$ coincides with the
natural comparison isomorphism $v_J^*R\psi_{f}(\C)\cong
v_J^*\psi^H_f(\C)$ in $D^+(E(J),\C)$.}

Consider the natural map
$$\delta'_L:a_L^*(\Omega^{\bullet}(\log E)/W_0\Omega^{\bullet}(\log E))\rightarrow a_L^*(\Omega^{\bullet}(\log
E)|_{E_J}/W_0\Omega^{\bullet}(\log E)|_{E_J})$$ in $C^+(E_L,\C)$.
Applying the functor $a_L^*$ to the diagram in Lemma \ref{res}, we
see immediately that $\delta'_L$ is a filtered quasi-isomorphism
w.r.t. the quotient of the weight filtrations. The map $\delta_L'$
defines a morphism of double complexes $a_L^*A^{p,q}\rightarrow
a_L^*A^{p,q}_L$, inducing a map $\delta_L$ on the associated
simple complexes. Since $\delta'_L$ is a filtered
quasi-isomorphism, $\delta_L$ is a filtered quasi-isomorphism
w.r.t. the second filtration on the double complexes.
The fact that $\sigma_L\circ \gamma_L=\delta_L\circ \beta_L$
follows from Lemma \ref{res}.
\end{proof}
\begin{prop}\label{mhs}
The natural specialization map $\Z\rightarrow R\psi_{f}(\Z)$
 induces canonical
isomorphisms
$$Gr_F^0H^i(E(J),\C)\cong Gr_F^0\mathbb{H}^i(E(J),v_J^*R\psi_f(\C))$$
for $i\geq 0$. In particular, by restricting to the weight zero
part, we get canonical isomorphisms
$$W_0H^i(E(J),\Q)\cong W_0\mathbb{H}^i(E(J),v_J^*R\psi_f(\Q))$$
\end{prop}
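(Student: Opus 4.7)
The plan is to use the map $\sigma : Hdg^\bullet(E(J))_\C \to v_J^*\psi_f^H(\C)$ from (\ref{s}), which by the preceding lemma is the complex component of a morphism of mixed Hodge complexes of sheaves lifting the specialization $\C_{E(J)}\to v_J^*R\psi_f(\C)$. In particular $\sigma$ is strictly compatible with $F$, so it suffices to show that $Gr_F^0\sigma$ is a quasi-isomorphism of complexes of sheaves on $E(J)$; the first claim then follows from strictness. The $W_0$-statement over $\Q$ will then be deduced from the elementary fact that, for any rational mixed Hodge structure $H$ with non-negative weights, the projection restricts to an inclusion $W_0 H_\C\hookrightarrow Gr_F^0 H_\C$ identifying $W_0 H_\C$ with the $H^{0,0}$-component, so a $Gr_F^0$-isomorphism over $\C$ restricts to a $W_0$-isomorphism, which descends to $\Q$ since $\sigma$ is rational.

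Since $Gr_F^0$ commutes with the formation of the associated simple complex, $Gr_F^0\sigma=s_{\square}(Gr_F^0\sigma_L)_{L\in \square_J}$. The stupid filtration on the source gives $Gr_F^0((a_L)_*\Omega^\bullet_{E_L})=(a_L)_*\mathcal{O}_{E_L}$ concentrated in degree $0$. On the target side, $F$ on $C_L^\bullet$ is indexed by the $q$-direction of $A_L^{\bullet\bullet}$, so $Gr_F^0 C_L^\bullet=(A_L^{\bullet,0},d')$; using that $W_{n+1}\Omega^{n+1}(\log E)=\Omega^{n+1}(\log E)$, we obtain $A_L^{n,0}=Gr^W_{n+1}\Omega^{n+1}_X(\log E)|_{E_L}$, which by Lemma \ref{res} is identified with $\bigoplus_{|M|=n+1}(a_{L\cup M})_*\mathcal{O}_{E_{L\cup M}}$. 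Under this identification, the differential $d'=(dt/t)\wedge(\cdot)$ becomes the natural \v{C}ech-type differential: locally $t=u\prod_i f_i$ with $u$ a unit and $f_i$ local equations for the components through the point, so $dt/t\equiv\sum_i df_i/f_i$ modulo $W_0\Omega^1$, and the Poincar\'e residue intertwines $df_i/f_i\wedge(\cdot)$ with insertion of the index $i$ into the tuple indexing $M$.

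Tracing through the construction of $\sigma_L$, the induced map $Gr_F^0\sigma_L$ is precisely the canonical augmentation from $(a_L)_*\mathcal{O}_{E_L}$ into the \v{C}ech complex of $E_L$ associated to the family $\{E_{L\cup\{i\}}\}_{i\in I}$. This family tautologically covers $E_L$, since for every $i\in L$ one has $E_{L\cup\{i\}}=E_L$; hence the \v{C}ech complex is a resolution and each $Gr_F^0\sigma_L$ is a quasi-isomorphism. Because $s_{\square}$ preserves level-wise quasi-isomorphisms of cocubical systems (via the row spectral sequence of the associated double complex), $Gr_F^0\sigma$ itself is a quasi-isomorphism, which finishes the proof.

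The main obstacle is the identification in the second paragraph of $d'$ with the \v{C}ech differential under the residue isomorphism, together with the sign bookkeeping coming from the conventions for $s_{\square}$ and the Poincar\'e residue. This is, however, a strictly local calculation on $X$ that reduces, after a choice of local coordinates adapted to $E$, to the Koszul-type identity already implicit in Peters and Steenbrink's treatment of the global limit mixed Hodge complex.
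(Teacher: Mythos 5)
Your proposal follows essentially the same route as the paper: pass to $\C$-coefficients, apply $Gr_F^0$ to the cocubical morphism $\sigma_L$, identify source and target via Lemma \ref{res}, show each $Gr_F^0\sigma_L$ is a quasi-isomorphism, and conclude by passing to $s_\square$. The extra detail you supply --- that $Gr_F^0 C_L^\bullet$ carries a \v{C}ech-type differential for the tautological closed cover $\{E_{L\cup\{i\}}\}_{i\in I}$ of $E_L$, with $E_{L\cup\{i\}}=E_L$ when $i\in L$ --- is exactly the content of Example \ref{diffpieces} applied to $X=E_L$, which is where $\sigma_L$ was defined; the paper is terse at this point but the underlying argument is identical.

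One imprecision in the last step: the general fact you invoke, that for any rational mixed Hodge structure $H$ with non-negative weights the quotient $H_\C\to Gr_F^0 H_\C$ restricts to an inclusion on $W_0 H_\C$, is false as stated. A pure rational Hodge structure of weight zero may carry Hodge types $(p,-p)$ with $p\neq 0$; non-negativity of the weights controls $W$ but not the bigrading of $W_0$, and in such a case $W_0 H_\C\cap F^1 H_\C\neq 0$. What you actually need is effectivity ($h^{p,q}=0$ whenever $p<0$ or $q<0$), which does hold for the cohomology of a complex variety and for the nearby cycle cohomology and forces $W_0 H$ to be of pure Hodge type $(0,0)$ --- this is precisely what the paper's one-line assertion ``both sides have Hodge type $(0,0)$'' is referring to. With that hypothesis in place, your deduction of the $W_0$-isomorphism from the $Gr_F^0$-isomorphism (via the Deligne bigrading, and descent to $\Q$) is sound.
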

\begin{proof}
It suffices to prove the statement after replacing rational
coefficients by complex coefficients. Then the second isomorphism
follows from the first since both sides have Hodge type $(0,0)$.

Applying the functor $Gr_F^0$ to the morphism $\sigma_L$ in
(\ref{s_L}), we get a morphism of complexes
$$Gr_F^0(\sigma_L):(a_{L})_*\mathcal{O}_{E_L}\rightarrow
\Omega_X^{\bullet+1}(\log
E)|_{E_L}/W_\bullet(\Omega_X^{\bullet+1}(\log
E)|_{E_L})=Gr^W_{\bullet+1}(\Omega_X^{\bullet+1}(\log E)|_{E_L})$$
and this map is a quasi-isomorphism: by Lemma \ref{res}, the
target is isomorphic to the complex $\oplus_{M\subset
I,|M|=\bullet+1}(a_{L\cup M})_*\mathcal{O}_{E_{L\cup M}}$. Passing
to the associated simple complexes, we see that
$$Gr_F^0(\sigma):Gr^0_F Hdg^\bullet(E(J))_{\C}\rightarrow Gr_F^0\psi^H_f(\C)$$ is a quasi-isomorphism, as
well.
\end{proof}

\subsection{Mixed Hodge structure on the nearby and vanishing cohomology}
Consider a complex variety $X$ and a flat morphism $f:X\rightarrow
\mathrm{Spec}\,\C[t]$ with smooth generic fiber. Let $x$ be any
complex point of $X_s$. The cohomology spaces $R^i\psi_f(\Z)_x$
carry a natural mixed Hodge structure
\cite[15.13]{navarro-invent}\cite[\S\,12.1.2]{peters-steenbrink}.
Take a strictly semi-stable model $(Y,g,\varphi)$ of $(f,x)$
(Definition \ref{red}), with $Y_s=\sum_{i\in I}E_i$; then
$\varphi^{-1}(x)=E(J)$ for some non-empty subset $J$ of $I$. There
are canonical isomorphisms
$$\mathbb{H}^i(E(J),v^*_JR\psi_{g}(\Z))\cong R^i\psi_f(\Z)_x $$
for $i\geq 0$, and in this way $R^i\psi_f(\Z)_x$ inherits an
integral mixed Hodge structure, which does not depend on the
chosen strictly semi-stable model.

If we denote by $R\Theta_f$ the vanishing cycles functor, then
$R^i\Theta_f(\Z)_x$ also carries a natural mixed Hodge structure
\cite[1.1]{navarro-survey}. We have
$$R^i\Theta_f(\Z)_x\cong R^i\psi_f(\Z)_x$$ for $i>0$, so
$R^i\Theta_f(\Z)_x$ inherits a mixed Hodge structure. For $i=0$,
 $R^i\Theta_f(\Z)_x$ carries a pure Hodge structure of
weight zero.

If $X$ is smooth at $x$, then $R^i\psi_f(\Z)_x$ is isomorphic to
the degree $i$ integral singular cohomology of the topological
Milnor fiber of $f$ at $x$ for each $i$, so this cohomology
carries a mixed Hodge structure. If $x$ is an isolated singularity
of $f$, then this mixed Hodge structure was constructed in
\cite{steenbrink-vanish}. Likewise, $R^i\Theta_f(\Z)_x$ is
isomorphic to the degree $i$ reduced integral singular cohomology
of the topological Milnor fiber of $f$ at $x$.
\subsection{Non-archimedean interpretation of the weight zero subspace}
In this section, we put $R=\C[[t]]$ and $K=\C((t))$, and we endow
$K$ with the absolute value $|\cdot|_r$ for some fixed $r\in \,
]0,1[$.
\begin{theorem}\label{main}
Let $X$ be a complex variety, endowed with a flat morphism
$f:X\rightarrow \mathrm{Spec}\,\C[t]$ with smooth generic fiber.
Let $x$ be a point of $X(\C)$ with $f(x)=0$. Denote by
$\mathscr{F}_x$ the analytic Milnor fiber of $f$ at $x$.

For each $i\geq 0$, there exists canonical isomorphisms
\begin{eqnarray*}
\alpha&:&H^i(\overline{\mathscr{F}}_{\!\! x},\Q)\rightarrow
W_0R^i\psi_f(\Q)_x
\\ \alpha'&:& \widetilde{H}^i(\overline{\mathscr{F}}_{\!\! x},\Q)\rightarrow
W_0R^i\Theta_f(\Q)_x
\end{eqnarray*}
%
\end{theorem}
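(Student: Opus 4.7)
The plan is to establish $\alpha$ as the composition of three natural isomorphisms and to deduce $\alpha'$ from $\alpha$ by comparing two short exact sequences. First, I fix a strictly semi-stable model $(Y, g, \varphi)$ of the germ $(f,x)$, whose existence is guaranteed by the preceding proposition; write $Y_s = \bigcup_{i \in I} E_i$ and set $E := \varphi^{-1}(x) = E(J)$ for the corresponding non-empty $J \subset I$. The $t$-adic completion of $Y$ produces a strictly semi-stable model of $\mathscr{F}_x$, and Theorem \ref{hom-mil} then gives a natural homotopy-equivalence $\overline{\mathscr{F}}_x \simeq |\Delta(E)|$, hence $H^i(\overline{\mathscr{F}}_x, \Q) \cong H^i(|\Delta(E)|, \Q)$. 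On the Hodge side, the mixed Hodge structure on $R^i\psi_f(\Q)_x$ is, by definition, obtained from the strictly semi-stable model $g$ as the MHS on $\mathbb{H}^i(E, v_J^* R\psi_g(\Q))$, and Proposition \ref{mhs} then identifies $W_0 R^i\psi_f(\Q)_x$ with $W_0 H^i(E, \Q)$.

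The missing link is the classical identification $W_0 H^i(E, \Q) \cong H^i(|\Delta(E)|, \Q)$ for a strict normal crossing variety $E$ with smooth intersections. This follows from the Mayer--Vietoris/weight spectral sequence computing $H^*(E, \Q)$ from the semi-simplicial variety $E^{[\bullet]}$ with $E^{[p]} = \sqcup_{|L|=p+1} E_L$: the $q = 0$ row of its $E_2$-page is the simplicial cochain complex of the nerve of the cover $\{E_i\}_{i \in J}$, which by construction is $|\Delta(E)|$; since the weights are non-negative and this row is pure of weight zero, the row survives to $E_\infty$ and computes $Gr^W_0 H^*(E, \Q) = W_0 H^*(E, \Q)$. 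Composing the three isomorphisms yields $\alpha$. For $\alpha'$, I exploit the short exact sequence of mixed Hodge structures $0 \to \Q \to R\psi_f(\Q)_x \to R\Theta_f(\Q)_x \to 0$ (with $\Q$ concentrated in degree zero and pure of weight zero), apply the exact functor $W_0$, and compare via $\alpha$ with the analogous sequence $0 \to \Q \to H^*(\overline{\mathscr{F}}_x, \Q) \to \widetilde H^*(\overline{\mathscr{F}}_x, \Q) \to 0$.

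The main obstacle I anticipate is canonicity rather than existence: one must verify that the composite $\alpha$ does not depend on the chosen strictly semi-stable model and that it realises the natural specialisation map suggested by the statement. This reduces to a diagram-chase comparing the topological specialisation of $\overline{\mathscr{F}}_x$ onto $|\Delta(E)|$, described explicitly by the retraction $\tau_{\X(r)}$ of Section \ref{berk} (cf.\ Lemma \ref{tau}), with the classical specialisation $\C_E \to v_J^* R\psi_g(\C)$ on the Hodge side; once this compatibility is in place, independence of the model follows from the already-established independence of both the MHS on nearby cycles and the homotopy type of $|\Delta(E)|$ (Theorem \ref{hom-mil}).
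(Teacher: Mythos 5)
Your proof is correct and follows essentially the same chain as the paper: semi-stable model $(Y,g,\varphi)$, the homotopy identification from Theorem \ref{hom-mil} (built on Proposition \ref{homeq} and Lemma \ref{extension}), Proposition \ref{mhs}, and the reduced/unreduced comparison for $\alpha'$. The one place you diverge in packaging is the middle isomorphism: the paper routes $H^i(\overline{\mathscr{F}}_{\!\! x},\Q)\cong H^i(\,\overline{]E[}\,,\Q)\cong H^i(E^{an},\Q)$ and then invokes Berkovich's theorem \cite[Thm\,1.1(c)]{berk-tate} to get $H^i(E^{an},\Q)\cong W_0H^i(E,\Q)$, while you bypass $E^{an}$ entirely and prove the equivalent identification $H^i(|\Delta(E)|,\Q)\cong W_0H^i(E,\Q)$ inline via the weight spectral sequence of the proper strict normal crossing variety $E$. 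That is a small but genuine self-containment gain, since the cited Berkovich theorem is itself proved by exactly this spectral-sequence computation combined with the contraction of $E^{an}$ onto its skeleton $|\Delta(E)|$. One wording imprecision worth fixing: the $q=0$ row of the weight spectral sequence is $E_1^{p,0}=\bigoplus_{|L|=p+1}H^0(E_L,\Q)$, which records the \emph{connected components} of the intersections $E_L$ and therefore computes the cohomology of the dual complex $\Delta(E)$ (cf.\ Lemma \ref{simpset}), not of the nerve of the cover $\{E_i\}_{i\in J}$; these two complexes agree only when every $E_L$ is connected. Replacing ``nerve of the cover'' by ``dual complex $\Delta(E)$'' makes the step accurate and matches the $n$-cell description in Lemma \ref{simpset}. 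Your concluding remarks on canonicity and independence of the model align with the paper's ``standard argument'' aside.
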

\begin{proof}
Take a strictly semi-stable model $(Y,g,\varphi)$ of $(f,x)$
(Definition \ref{red}), denote by $d$ its ramification index, and
by $\mY$ the $t$-adic completion of $Y$. Put $E=\varphi^{-1}(x)$,
and denote by
 $v:E\rightarrow \mY_s$ the closed immersion.
  Then $\varphi$
induces an isomorphism of $\widehat{(K_r)^a}$-analytic spaces
$$\overline{]E[}:=sp_{\mY(r)}^{-1}(E)\widehat{\times}_{K_r(d)}\widehat{(K_r)^a}\cong
\overline{\mathscr{F}}_{\!\! x}$$ and an isomorphism of mixed
Hodge structures
$$R^i\psi_f(\Q)_x\cong \mathbb{H}^i(E,v^*R\psi_{g}(\Q))$$ On the other hand, Berkovich proved in
\cite[Thm\,1.1(c)]{berk-tate} that there exists a canonical
isomorphism $H^i(E^{an},\Q)\cong W_0H^i(E,\Q)$, and it follows
from Proposition \ref{homeq} and Lemma \ref{extension}
 that there exists a canonical isomorphism
$H^i(E^{an},\Q)\cong H^i(\,\overline{]E[}\,,\Q)$.

Hence, we obtain an isomorphism $H^i(\overline{\mathscr{F}}_{\!\!
x},\Q)\cong W_0H^i(E,\Q)$ and, by Proposition \ref{mhs}, an
isomorphism
$$H^i(\overline{\mathscr{F}}_{\!\! x},\Q)\cong W_0R^i\psi_f(\Q)_x$$
A standard argument shows that this isomorphism does not depend on
the chosen semi-stable model.

The mixed Hodge structure on $R^0\psi_f(\Q)_x$ is pure of weight
zero, so for $i=0$ the map $\alpha$ is an isomorphism
$$\alpha:H^0(\overline{\mathscr{F}}_{\!\! x},\Q)\rightarrow
R^0\psi_f(\Q)_x$$ Passing to reduced cohomology yields the natural
isomorphism $\alpha'$.
\end{proof}
\begin{remark} Using the same methods as in \cite{berk-limit} one can
generalize Theorem \ref{main} as follows: if $f:X\rightarrow \Spec
\C[t]$ is a flat morphism of complex varieties, $Z$ a proper
subvariety of the special fiber, and $\mZ$ the formal completion
of $f$ along $Z$, then there is for each $i\geq 0$ a canonical
isomorphism
\begin{eqnarray*}
\alpha&:&H^i(\overline{\mZ(r)_\eta},\Q)\rightarrow
W_0\mathbb{H}^i(Z,R\psi_f(\Q))
\end{eqnarray*}
One reduces to the case where $f$ has smooth generic fiber by
taking a hypercovering of $f$; then one uses the proof of Theorem
\ref{main}.
\end{remark}
\begin{corollary}\label{vanishcoh}
If $X$ is smooth, of pure dimension $n+1$,  and if we denote by
$s$ the dimension of the singular locus of $f$ at $x$, then
$H^i(\overline{\mathscr{F}}_{\!\! x},\Q)=0$ for $i\notin
\{0,n-s,n-s+1,\ldots,n\}$. If $s<n$, then
$\overline{\mathscr{F}}_{\!\! x}$ is arc-connected.
\end{corollary}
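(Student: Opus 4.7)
The plan is to translate the statement via Theorem \ref{main} into a claim about the cohomology of the topological Milnor fiber, and then invoke classical topological results. By Theorem \ref{main}, there is a canonical isomorphism $H^i(\overline{\mathscr{F}}_{\!\! x}, \Q) \cong W_0 R^i\psi_f(\Q)_x$ for every $i \geq 0$. Since $X$ is smooth at $x$, the stalk $R^i\psi_f(\Q)_x$ is canonically isomorphic to the singular cohomology $H^i(F_x, \Q)$ of the topological Milnor fiber $F_x$ of $f$ at $x$, and the resulting mixed Hodge structure on $H^i(F_x,\Q)$ is the classical one. So it suffices to prove that $H^i(F_x, \Q) = 0$ for $i \notin \{0, n-s, n-s+1, \ldots, n\}$, in which case $W_0$ of it vanishes a fortiori.

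For the upper range $i > n$: the Milnor fiber $F_x$ is a $2n$-real-dimensional Stein manifold, hence by Milnor's classical theorem (or Andreotti--Frankel) it has the homotopy type of a CW complex of real dimension at most $n$. Consequently $H^i(F_x, \Q) = 0$ for all $i > n$.

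For the lower range $1 \leq i \leq n-s-1$ (which is vacuous when $s \geq n$): I would invoke the Kato--Matsumoto connectivity theorem, which asserts that if the singular locus of $f$ at $x$ has dimension $s$, then $F_x$ is $(n-s-1)$-connected. This immediately gives the desired vanishing of $H^i(F_x,\Q)$ and, when $s<n$, the connectedness of $F_x$, so that $H^0(F_x,\Q) = \Q$, and hence $H^0(\overline{\mathscr{F}}_{\!\! x}, \Q) = \Q$ (the weight-zero part is everything, since $R^0\psi_f(\Q)_x$ is of pure Hodge type $(0,0)$).

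For the arc-connectedness assertion when $s<n$: Theorem \ref{hom-mil} exhibits a homotopy equivalence between $\overline{\mathscr{F}}_{\!\! x}$ and the geometric realization $|\Delta(E)|$ of the simplicial set attached to a strictly semi-stable model, and $|\Delta(E)|$ is a CW complex, for which connectedness and arc-connectedness coincide. The computation $H^0(\overline{\mathscr{F}}_{\!\! x}, \Q) = \Q$ from the previous paragraph forces $|\Delta(E)|$ to have exactly one connected component, and the homotopy equivalence of Theorem \ref{hom-mil} transports arc-connectedness back to $\overline{\mathscr{F}}_{\!\! x}$. No substantive difficulty arises here: the content of the corollary lies entirely in Theorems \ref{hom-mil} and \ref{main}, while the remaining ingredients are well-known classical facts about Milnor fibers of holomorphic functions.
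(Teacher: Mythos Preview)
Your proof is correct and the vanishing argument is essentially the paper's: both reduce via Theorem \ref{main} to the classical vanishing of the (reduced) cohomology of the topological Milnor fiber outside degrees $\{n-s,\ldots,n\}$, which the paper simply calls ``well-known'' while you cite Milnor/Andreotti--Frankel and Kato--Matsumoto explicitly.

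The arc-connectedness argument, however, differs. The paper argues: $R^0\psi_f(\Q)_x\cong\Q$ gives $H^0(\overline{\mathscr{F}}_{\!\! x},\Q)\cong\Q$ via Theorem \ref{main}, so $\overline{\mathscr{F}}_{\!\! x}$ is connected, and then one invokes Berkovich's general result \cite[3.2.1]{Berk1} that a connected $k$-analytic space is arc-connected. You instead use Theorem \ref{hom-mil} to transport the question to the CW complex $|\Delta(E)|$, where connected and arc-connected coincide. Both routes are valid. The paper's is shorter and more self-contained on the analytic side (a single citation), while yours avoids the external reference at the cost of invoking the semi-stable model machinery of Theorem \ref{hom-mil}, which is not otherwise needed here.
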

\begin{proof}
It is well-known that the reduced cohomology of the topological
Milnor fiber of $f$ at $x$ vanishes for $i\notin
\{n-s,n-s+1,\ldots,n\}$. If $s<n$, then $R^0\psi_f(\Q)_x\cong \Q$,
so $\overline{\mathscr{F}}_{\!\! x}$ is connected and hence
arc-connected by \cite[3.2.1]{Berk1}.
\end{proof}
\begin{cor}\label{steen}
We assume that $X$ is smooth of pure dimension $n+1$ with $n>0$,
$f$ is projective and $x$ is the only singular point of the
special fiber $X_s=f^{-1}(0)$. We denote by $\X$ the $t$-adic
completion of $f$. Then we have an isomorphism of long exact
sequences
$$\minCDarrowwidth5pt\begin{CD}
 @>>> H^i((X_s)^{an},\Q)@>>> H^i(\overline{\X(r)}_\eta,\Q)@>>>
\widetilde{H}^i(\,\overline{\mathscr{F}}_{\!\! x},\Q)@>>>
H^{i+1}((X_s)^{an},\Q)@>>>
\\@.@V\gamma V\cong V @V\beta V\cong V @V\cong V\alpha' V @VV\cong
V@.
\\ @>>> W_0H^i(X_s,\Q)@>>>
W_0\mathbb{H}^i(X_s,R\psi_f(\Q))@>>> W_0R^i\Theta_f(\Q)_x@>>>
W_0H^{i+1}(X_s,\Q)@>>>
\end{CD} $$
where the upper long exact sequence is the one from Proposition
\ref{longexact}, the lower one is obtained by applying the exact
functor $Gr^W_0$ to the canonical long exact sequence of mixed
Hodge structures \cite[1.1]{navarro-survey}
$$\ldots\rightarrow H^i(X_s,\Q)\rightarrow \mathbb{H}^i(X_s,R\psi_f(\Q))\rightarrow
\mathbb{H}^i(X_s,R\Theta_f(\Q))\cong
R^i\Theta_{f}(\Q)_x\rightarrow \ldots$$
 and where $\beta$ is the isomorphism of
\cite[5.1]{berk-limit} and $\gamma$ is the isomorphism of
\cite[Thm\,1.1(c)]{berk-tate}.

This diagram breaks up in commutative squares of isomorphisms, for
each $0\leq i<n$,
$$\begin{CD}
H^i((X_s)^{an},\Q)@>\sim >> H^i(\overline{\X(r)}_\eta,\Q)
\\ @V\gamma V\cong V @V\cong V\beta V
\\ W_0H^i(X_s,\Q)@>\sim >>
W_0\mathbb{H}^i(X_s,R\psi_f(\Q))
\end{CD}$$
as well as an isomorphism of exact sequences
$$\minCDarrowwidth5pt\begin{CD}
0@>>> H^n((X_s)^{an},\Q)@>>> H^n(\overline{\X(r)}_\eta,\Q)@>>>
H^n(\,\overline{\mathscr{F}}_{\!\! x},\Q)@>>>
H^{n+1}((X_s)^{an},\Q)@>>> 0
\\@. @V\gamma V\cong V @V\beta V\cong V @V\cong V\alpha V @VV\cong
V @.
\\0@>>> W_0H^n(X_s,\Q)@>>>
W_0\mathbb{H}^n(X_s,R\psi_f(\Q))@>>> W_0R^n\psi_f(\Q)_x@>>>
W_0H^{n+1}(X_s,\Q)@>>> 0
\end{CD} $$
\end{cor}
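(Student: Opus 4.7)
The plan is to assemble the diagram from pieces that have essentially been constructed already, verify the vertical isomorphisms, then apply the five lemma and use the vanishing of Milnor fiber cohomology for the splitting.

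First, I would identify each vertical arrow with a result already in the literature. The map $\alpha'$ is the isomorphism constructed in Theorem \ref{main}, once we observe that for $i>0$ reduced and unreduced cohomology of $\overline{\mathscr{F}}_{\!\!x}$ agree (since $\overline{\mathscr{F}}_{\!\!x}$ is connected by Corollary \ref{vanishcoh}, as $s=0$), and that $R^i\psi_f\cong R^i\Theta_f$ for $i>0$ while $R^0\Theta_f(\Q)_x=0$. The map $\beta$ is exactly Berkovich's comparison isomorphism from \cite[5.1]{berk-limit} applied to the proper flat family $f:X\to\Spec\C[t]$ (whose $t$-adic completion is proper, by projectivity of $f$), and $\gamma$ is the Berkovich--Tate isomorphism from \cite[Thm\,1.1(c)]{berk-tate} applied to the proper $\C$-variety $X_s$.

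Next I would verify the commutativity of the whole ladder. The upper long exact sequence from Proposition \ref{longexact} arises from the pair $(\mY^k_\eta,\mE^k_\eta)$ via the quotient identification $(Y_s)^{an}/E^{an}\approx (X_s)^{an}$ given by $h_s^{an}$, combined with the chain of homotopy equivalences $\overline{\X(r)}_\eta\to\mY^k_\eta$, $(Y_s)^{an}\to \mY^k_\eta$, $E^{an}\to \mE^k_\eta$, $\overline{]x[}\to\mE^k_\eta$ from Proposition \ref{homeq} and Lemma \ref{extension}. The lower sequence arises by applying the exact functor $Gr^W_0$ to the distinguished triangle $\Z_{X_s}\to R\psi_f(\Z)\to R\Theta_f(\Z)\to[1]$ together with Navarro Aznar's mixed Hodge structure on the nearby and vanishing cohomology. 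Commutativity of each square then follows from naturality: on the left square it reduces to the naturality of the Berkovich--Tate isomorphism for the proper map $E^{an}\hookrightarrow (X_s)^{an}$; on the middle square it is the global-to-local compatibility of Berkovich's specialization construction used in the proof of Theorem \ref{main}, which factors through the skeleton $S(\mY/\mathfrak{R})$; and on the right square it is the naturality of the connecting map, which on both sides is induced by the cone construction applied to the same pair. The step I expect to require the most care is to check that the connecting homomorphisms match, i.e.\ that the topological boundary map $H^i(\overline{\mathscr{F}}_{\!\!x})\to H^{i+1}((X_s)^{an})$ from the quotient homotopy equivalence coincides (under $\alpha$ and $\gamma$) with the boundary coming from the triangle of Navarro's mixed Hodge complexes; the natural way to do this is to trace through the construction in Section \ref{sec-mhs} and apply $Gr^0_F$ to the cocubical comparison $\sigma$ of (\ref{s}) restricted to the pair $(Y_s,E)$.

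Granting commutativity, the five lemma gives an isomorphism of long exact sequences. To obtain the stated splitting for $0\le i<n$, I would invoke Milnor's theorem: since $X$ is smooth of pure dimension $n+1$ and $x$ is an isolated singular point of $f$ (so $s=0$), the topological Milnor fiber is homotopy equivalent to a bouquet of $n$-spheres. Hence $R^i\Theta_f(\Q)_x=0$ for $i\ne n$, so its weight zero part also vanishes, and by Theorem \ref{main} (or directly by Corollary \ref{vanishcoh}) $\widetilde{H}^i(\overline{\mathscr{F}}_{\!\!x},\Q)=0$ for $i\ne n$. These vanishings kill the flanking terms in both long exact sequences for $0\le i<n$, which forces the sequences to degenerate into the asserted squares of isomorphisms, and leaves precisely the four-term exact sequence in degrees $n$ and $n+1$ displayed at the end of the statement.
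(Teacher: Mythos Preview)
The paper actually gives no proof of this corollary at all; it is simply stated as a consequence of the preceding results (Proposition~\ref{longexact}, Theorem~\ref{main}, and the cited isomorphisms of Berkovich). Your outline is therefore not merely ``the same approach'' --- it is the only reasonable way to fill in what the author leaves implicit, and the overall structure you propose (identify the vertical isomorphisms, check commutativity square by square, then invoke Milnor's bouquet-of-spheres theorem for the splitting) is correct.

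Two small comments. First, the five lemma is superfluous here: all four vertical maps $\gamma,\beta,\alpha',\gamma$ are isomorphisms by the results you cite, so once commutativity is established there is nothing left to prove about the ladder. Second, the part you flag as ``requiring the most care'' --- matching the connecting homomorphisms --- is indeed the substantive step, and your plan of tracing it through the cocubical comparison map $\sigma$ of (\ref{s}) restricted to the pair $(Y_s,E)$ is the right route; the paper's construction of $\alpha$ in the proof of Theorem~\ref{main} and of the upper sequence in the proof of Proposition~\ref{longexact} both pass through the same strictly semi-stable model $(Y,E)$, which is what makes the comparison possible. The splitting for $0\le i<n$ and the four-term sequence in degree $n$ follow exactly as you say from $\widetilde{H}^i(\overline{\mathscr{F}}_{\!\!x},\Q)=0$ for $i\neq n$ (Corollary~\ref{vanishcoh} with $s=0$, or equivalently Milnor's theorem on the topological side).
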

It is well-known that any analytic germ of an isolated
hypersurface singularity can be embedded in a projective morphism
$f$ as in Corollary \ref{steen}; see \cite[\S\,1.1]{brieskorn}.
\section{Comparison with the motivic Milnor
fiber}\label{sec-motmil} In this final section, we consider the
motivic counterparts of the above results. Let $k$ be a field of
characteristic zero, and put $R=k[[t]]$ and $K=k((t))$. We fix an
element $r$ in $]0,1[$ and we endow $K$ with the $t$-adic absolute
value $|\cdot|_r$. Since $r$ will remain fixed throughout this
section, we simplify notation by writing $K$ for the
non-archimedean field $K_r$ and $\X_\eta$ for the generic fiber
$\X(r)_\eta$ of a special formal $R$-scheme in the category of
$K_r$-analytic spaces.
 For each integer $d>0$, we put
$K(d)=K[t_d]/((t_d)^d-t)$ and we denote by $R(d)$ the
normalization of $R$ in $K(d)$.

\subsection{Motivic volume of a formal scheme}
Let $\X$ be a generically smooth special formal $R$-scheme. In
\cite[7.39]{Ni-trace} we defined the motivic volume
$S(\X;\widehat{K^s})$ of $\X$. It is an element of the localized
Grothendieck ring of $\X_0$-varieties $\mathcal{M}_{\X_0}$ (see
for instance \cite[\S\,3.1]{NiSe} for the definition of
$\mathcal{M}_{\X_0}$). The aim of the current section is to
explain the behaviour of the motivic volume under extension of the
base ring $R$. This result will be used further on to give an
expression of the motivic volume in terms of a semi-stable model
of $\X$ (Theorem \ref{motmil}).

As we noted in \cite[7.40]{Ni-trace}, the motivic volume depends
in general on the choice of the uniformizer $t$, or, more
precisely, on the $K$-fields $K(d)$. If $k$ is algebraically
closed, then up to $K$-isomorphism, $K(d)$ is the unique extension
of $K$ of degree $d$, and the motivic volume is independent of the
choice of uniformizer.

In order to speak about the motivic volume of a special formal
$R(d)$-scheme when $k$ is not algebraically closed, we fix the
uniformizer $t_d$ in $R(d)$. This yields an isomorphism of
$k$-algebras $R(d)\cong k[[t_d]]$ and natural isomorphisms of
$R$-algebras $(R(d))(e)\cong R(d\cdot e)$ for $d,e>0$.

As usual, we denote by $\LL$ the class of the affine line
$\A^1_{\X_0}$ in $\mathcal{M}_{\X_0}$. This is a unit in
$\mathcal{M}_{\X_0}$.
 We'll denote by $\mathcal{R}_{\X_0}$ the subring
$$\mathcal{R}_{\X_0}:=
\mathcal{M}_{\X_0}\left[\frac{T^b}{T^b-\LL^a} \right]_{(a,b)\in
\Z\times \N^{\ast}}$$ of $\mathcal{M}_{\X_0}[[T]]$, and by
$\mathcal{R}'_{\X_0}$ the subring of $\mathcal{M}_{\X_0}[[T]]$
consisting of elements of the form $P(T)/Q(T)$, with $P(T),\,Q(T)$
polynomials over $\mathcal{M}_{\X_0}$ such that $Q(0)$ is a unit
in $\mathcal{M}_{\X_0}$, $Q(T)$ is monic, and the degree of $Q(T)$
is at least the degree of $P(T)$. The ring $\mathcal{R}'_{\X_0}$
contains $\mathcal{R}_{\X_0}$. There exists a unique morphism of
$\mathcal{M}_{\X_0}$-algebras
$$\lim_{T\to \infty}:\mathcal{R}'_{\X_0}\rightarrow \mathcal{M}_{\X_0}$$
mapping $P(T)/Q(T)$ (with $P(T),\,Q(T)$ as above) to the
coefficient of $T^{\mathrm{deg}\,Q}$ in $P(T)$, where
$\mathrm{deg}\,Q$ denotes the degree of $Q(T)$. It restricts to
the morphism
$$\lim_{T\to \infty}:\mathcal{R}_{\X_0}\rightarrow \mathcal{M}_{\X_0}$$
from \cite[7.35]{Ni-trace}.


For any ring $A$, any element $a(T)=\sum_{i\geq 0}a_iT^i$ of
$A[[T]]$, and any integer $d>0$, we put $a(T)[d]=\sum_{i\geq
0,\,d|i}a_{i}T^{i}\in A[[T]]$.

\begin{lemma}\label{R}
For any $(p,q,r)\in \Z\times \N\times \N^{\ast}$ with $q\leq r$,
$$\frac{T^q}{T^r-\LL^p}\in \mathcal{R}_{\X_0}$$
\end{lemma}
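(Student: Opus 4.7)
The lemma breaks into three cases by the value of $q$, of which the two extreme cases are immediate. When $q=r$, the element $\frac{T^r}{T^r-\LL^p}$ is one of the defining generators of $\mathcal{R}_{\X_0}$. When $q=0$, the identity $\frac{T^r}{T^r-\LL^p}-1=\frac{\LL^p}{T^r-\LL^p}$ together with the invertibility of $\LL$ in $\mathcal{M}_{\X_0}$ yields
$$\frac{1}{T^r-\LL^p}=\LL^{-p}\Bigl(\frac{T^r}{T^r-\LL^p}-1\Bigr)\in\mathcal{R}_{\X_0}.$$

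For the interior case $0<q<r$ the plan is to exhibit $\frac{T^q}{T^r-\LL^p}$ as a finite polynomial combination of generators in which only pure powers of $\LL$ are inverted. The central computational tool is the identity
$$\frac{T^b}{T^b-\LL^a}-\frac{T^{nb}}{T^{nb}-\LL^{na}}=\sum_{j=1}^{n-1}\LL^{(n-j)a}\,\frac{T^{jb}}{T^{nb}-\LL^{na}},$$
obtained by bringing the two generators to the common denominator $T^{nb}-\LL^{na}=(T^b-\LL^a)(T^{(n-1)b}+\LL^aT^{(n-2)b}+\cdots+\LL^{(n-1)a})$. Its left-hand side is manifestly in $\mathcal{R}_{\X_0}$. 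For $n=2$ the equation degenerates to a single summand, immediately giving $\frac{T^b}{T^{2b}-\LL^{2a}}\in\mathcal{R}_{\X_0}$ (and by iterated doubling, $\frac{T^{2^{k-1}b}}{T^{2^kb}-\LL^{2^ka}}\in\mathcal{R}_{\X_0}$ for all $k\ge1$). For general $n$ one obtains one linear relation among the $n-1$ unknowns $\frac{T^{jb}}{T^{nb}-\LL^{na}}$; applying the identity with several different starting generators and several values of $n$ (with $n b$ a common multiple of $r$ and of intermediate exponents, and $na$ the matching multiple of $p$) one collects enough relations to solve for each $\frac{T^{jb}}{T^{nb}-\LL^{na}}$ individually as an $\mathcal{M}_{\X_0}[\LL^{-1}]$-combination of generators. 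Choosing $n$ and $b$ so that $nb=r$ and $na=p$ then recovers the target element.

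The main obstacle is combinatorial rather than conceptual: the only elements of $\mathcal{M}_{\X_0}$ one may divide by are units, i.e.\ powers of $\LL$, which rules out the tempting shortcut of partial-fraction decomposition over an algebraic closure (where one would have to invert differences $\LL^{a}-\LL^{a'}$) and also the shortcut ``$\frac{T^q}{T^r-\LL^p}=T^q\cdot\frac{1}{T^r-\LL^p}$'' (since $T^q$ itself is generally not in $\mathcal{R}_{\X_0}$). Moreover, the linear systems coming from the identity are prone to hidden degeneracies: for instance the relation coming from $n=4$ applied to $(b,a)$ and the relation coming from $n=2$ applied to $(b,a)$ followed by the $n=2$ doubling on $(2b,2a)$ coincide up to an $\LL$-power. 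The technical heart of the proof will therefore be the verification that, for each triple $(p,q,r)$, one can produce genuinely independent relations—most likely by an induction on $r$ (or on some invariant such as $r$ together with the arithmetic complexity of $\gcd(q,r)$ and of the denominator $T^r-\LL^p$), with a careful choice of auxiliary generators at each stage.
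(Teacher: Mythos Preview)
Your base cases $q=r$ and $q=0$ are exactly right, and you have correctly identified the real constraint: only powers of $\LL$ are invertible, so naive partial fractions are forbidden. But the inductive step is not just incompletely verified---it has a structural obstruction.

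Your identity
\[
\frac{T^b}{T^b-\LL^a}-\frac{T^{nb}}{T^{nb}-\LL^{na}}=\sum_{j=1}^{n-1}\LL^{(n-j)a}\,\frac{T^{jb}}{T^{nb}-\LL^{na}}
\]
can only produce relations with denominator $T^r-\LL^p$ when $nb=r$ and $na=p$, i.e.\ when $n\mid\gcd(r,p)$. If $\gcd(r,p)=1$ (for instance $\frac{T}{T^2-\LL}$), no nontrivial instance exists. Passing to a larger denominator $T^{mr}-\LL^{mp}$ does not help: the same constraint forces $b$ to be a multiple of $r/\gcd(r,p)$, so the numerators $T^{jb}$ you can reach are all multiples of $r/\gcd(r,p)$, whereas $\frac{T^q}{T^r-\LL^p}=\sum_{k}\LL^{kp}\frac{T^{q+(m-1-k)r}}{T^{mr}-\LL^{mp}}$ involves exponents congruent to $q\pmod r$. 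So the linear systems you propose are not merely ``prone to hidden degeneracies''---in the coprime case they are empty.

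The paper's argument avoids this by bringing in an auxiliary generator with $a=0$. Writing $D^p_{q,r}=\frac{T^q}{T^r-\LL^p}$, one has the exact identity
\[
D^p_{q,r}=D^p_{r,r}\cdot D^0_{0,r-q}-D^p_{0,r}\cdot D^0_{q,r-q},
\]
so that $D^p_{q,r}\equiv -D^p_{0,r}\cdot D^0_{q,r-q}\pmod{\mathcal{R}_{\X_0}}$. If $q\le r-q$ this is lexicographically smaller in $(q,r)$; if $q>r-q$ one rewrites $D^p_{0,r}\cdot D^0_{q,r-q}=D^p_{2q-r,r}\cdot D^0_{r-q,r-q}$ and uses $0<2q-r<q$. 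The point is that multiplying by $\frac{1}{T^{r-q}-1}$ (the $a=0$ generator) lets one shift the exponent $q$ without touching $p$ at all, so the coprimality of $p$ and $r$ never enters. Your identity, by contrast, ties the numerator exponents rigidly to the ratio $p/r$.
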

\begin{proof}
For any $(p,q,r)\in \Z\times \N\times \N^{\ast}$ we use the
notation
$$D^{p}_{q,r}:=\frac{T^q}{T^r-\LL^p}\ \in \mathcal{M}_{\X_0}[[T]]$$
By definition, $D^{p}_{r,r}\in \mathcal{R}_{\X_0}$. Moreover, the
relation $D^p_{0,r}=\LL^{-p}\left(D^{p}_{r,r}-1\right)$ shows that
$D^{p}_{0,r}\in \mathcal{R}_{\X_0}$.

Put $I=\{(q,r)\in \N\times \N^{\ast}\,|\,q\leq r\}$. We will show
that $D^{p}_{q,r}\in \mathcal{R}_{\X_0}$ for any $(p,q,r)\in
\Z\times I$, by induction on $(q,r)$ w.r.t. the lexicographic
ordering on $I$. We may assume that $0<q<r$. We have
\begin{eqnarray*} D^{p}_{q,r}&=&\frac{T^r}{T^r-\LL^p}\cdot
\frac{1}{T^{r-q}-1}-\frac{1}{T^r-\LL^p}\cdot\frac{T^q}{T^{r-q}-1}
\\&\equiv& -D^{p}_{0,r}\cdot D^{0}_{q,r-q}\mod \mathcal{R}_{\X_0}
\end{eqnarray*}
We know that $D^{p}_{0,r}\in \mathcal{R}_{\X_0}$, and if $q\leq
r-q$, then $D^{0}_{q,r-q}\in \mathcal{R}_{\X_0}$ by the induction
hypothesis. Hence, we may assume that $q>r-q$. Then we write
$$D^{p}_{0,r}\cdot D^{0}_{q,r-q}=D^{p}_{2q-r,r}\cdot D^{0}_{r-q,r-q}$$
and since $0<2q-r<q$ the induction hypothesis implies that
$D^{p}_{2q-r,r}$ belongs to $\mathcal{R}_{\X_0}$.
\end{proof}
\begin{lemma}\label{divis}
For any integer $d>0$, the morphism of
$\mathcal{M}_{\X_0}$-modules
$$\phi_d:\mathcal{M}_{\X_0}[[T]]\rightarrow
\mathcal{M}_{\X_0}[[T]]:a(T)\mapsto a(T)[d]$$ restricts to a
$\mathcal{M}_{\X_0}$-module endomorphism of $\mathcal{R}_{\X_0}$.
 Moreover, $$\lim_{T\to \infty}\circ\ \phi_d=\lim_{T\to \infty}$$
\end{lemma}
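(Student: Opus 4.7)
The plan is to prove both assertions by direct calculation on a convenient generating set. Since $\phi_d$ is $\mathcal{M}_{\X_0}$-linear, it suffices to verify them on the $\mathcal{M}_{\X_0}$-module generators of $\mathcal{R}_{\X_0}$, namely on the finite products $\prod_{i=1}^r f_{a_i,b_i}$ with $f_{a,b} := T^b/(T^b-\LL^a)$. The structural observation I will exploit is that $\phi_d$ is not merely $\mathcal{M}_{\X_0}$-linear but in fact $\mathcal{M}_{\X_0}[[T^d]]$-linear, since the decomposition $\mathcal{M}_{\X_0}[[T]] = \bigoplus_{k=0}^{d-1}T^k \mathcal{M}_{\X_0}[[T^d]]$ realizes $\phi_d$ as the projection onto the $k=0$ summand.

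Writing $d_i' = d/\gcd(b_i,d)$, $A_i = a_id_i'$, and $B_i = b_id_i'$ (so that $d \mid B_i$), an elementary manipulation of geometric series yields the key factorization
\begin{equation*}
f_{a_i,b_i}(T) \;=\; g_{A_i,B_i}(T) \cdot \sum_{n_i=1}^{d_i'}\LL^{-a_i n_i}\,T^{b_i n_i},
\end{equation*}
where $g_{A,B}(T) := \LL^A/(T^B-\LL^A) = -\sum_{m\geq 0}\LL^{-Am}T^{Bm}$; crucially $g_{A_i,B_i}$ lies in $\mathcal{M}_{\X_0}[[T^d]]$ precisely because $d \mid B_i$. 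Multiplying these factorizations over $i = 1,\ldots,r$ and invoking $\mathcal{M}_{\X_0}[[T^d]]$-linearity of $\phi_d$ gives
\begin{equation*}
\phi_d\Big(\prod_{i=1}^r f_{a_i,b_i}\Big) \;=\; \frac{\LL^{\sum_i A_i}}{\prod_i(T^{B_i}-\LL^{A_i})} \cdot \sum_{\vec n}\LL^{-\sum_i a_in_i}\,T^{\sum_i b_in_i},
\end{equation*}
the sum taken over tuples $\vec n = (n_1,\ldots,n_r)$ with $1 \le n_i \le d_i'$ and $d \mid \sum_i b_i n_i$. Each summand therefore has the shape $c\,T^N/\prod_i(T^{B_i}-\LL^{A_i})$ with $c \in \mathcal{M}_{\X_0}$ and $0 \le N \le \sum_i B_i$. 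Writing $N = \sum_i N_i$ with $0 \le N_i \le B_i$ and factoring the fraction as $\prod_i D^{A_i}_{N_i,B_i}$, a direct extension of Lemma~\ref{R} places each such summand in $\mathcal{R}_{\X_0}$, establishing the first assertion.

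For the identity $\lim_{T\to\infty}\circ \phi_d = \lim_{T\to\infty}$, I reduce once more to these generators. Since $\lim_{T\to\infty}$ is a ring homomorphism on $\mathcal{R}'_{\X_0}$ and $\lim_{T\to\infty}f_{a,b} = 1$, the left-hand side applied to a generator gives $\lim_{T\to\infty}\prod_i f_{a_i,b_i} = 1$. For the right-hand side, the rational expression displayed above has denominator of degree $\sum_i B_i$ and numerator of degree at most $\sum_i B_i$; the coefficient of $T^{\sum_i B_i}$ in the numerator arises only from the unique tuple $\vec n = (d_1',\ldots,d_r')$ (which indeed satisfies $d \mid \sum b_in_i$ since each $B_i$ is divisible by $d$) and evaluates to $\LL^{\sum_i A_i}\cdot \LL^{-\sum_i a_i d_i'} = 1$, whence $\lim_{T\to\infty}\phi_d(\prod_i f_{a_i,b_i}) = 1$ as well. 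The main obstacle in this plan is really just the combinatorial bookkeeping: verifying the key factorization of $f_{a,b}$ as an element of $\mathcal{M}_{\X_0}[T]\cdot \mathcal{M}_{\X_0}[[T^d]]$, and extending Lemma~\ref{R} from a single denominator $T^r - \LL^p$ to a product of several such denominators, both of which are routine once the decomposition is in hand.
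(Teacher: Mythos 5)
Your proof is correct and follows essentially the same route as the paper: you arrive at the same closed-form identity $\phi_d\bigl(\prod_i f_{a_i,b_i}\bigr)=\bigl(\prod_i g_{A_i,B_i}\bigr)\cdot\sum_{\vec n}\LL^{-\sum a_in_i}T^{\sum b_in_i}$ that the paper derives via the bijection $S\times\N^n\to\{w\mid d\mid w\cdot b\}$, and then both arguments conclude with Lemma \ref{R} (together with the subring property of $\mathcal{R}_{\X_0}$) and by reading off the leading coefficient, which is $1$. Your preliminary observation that $\phi_d$ is $\mathcal{M}_{\X_0}[[T^d]]$-linear, combined with the per-factor decomposition $f_{a,b}=g_{A,B}\cdot\sum_{n=1}^{d'}\LL^{-an}T^{bn}$, is a slightly more structured way to obtain the same identity, but it is not a different method.
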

\begin{proof}
Consider an integer $n>0$, a tuple $a\in \Z^{n}$, and a tuple
$b\in (\N^{\ast})^n$. We put
$$C_{a,b}:=\left(\prod_{i=1}^{n}\frac{T^{b_i}}{T^{b_i}-\LL^{a_i}}\right)[d]\in
\mathcal{M}_{\X_0}[[T]]$$ It suffices to show that $C_{a,b}\in
\mathcal{R}_{\X_0}$ and that $$\lim_{T\to\infty}C_{a,b}=1$$

For any pair of tuples $u,v\in \Z^n$, we put $u\cdot
v=\sum_{i=1}^{n}u_i\cdot v_i$. For each $i\in \{1,\ldots,n\}$, we
put $m_i=lcm(b_i,d)$ and $e_i=m_i/b_i$. If we put
$$S=\{u\in \N^n\,|\,1\leq u_i\leq e_i\mbox{ for all }i,\mbox{ and }d|u\cdot b \}$$
then the map
$$S\times \N^{n}\rightarrow \{w\in (\N^{\ast})^n\,|\,w\cdot b\in d\N\}:(u,v)\mapsto (u_1+v_1e_1,\ldots,u_n+v_ne_n) $$
is a bijection. Therefore
$$C_{a,b}=\left(\sum_{u\in S}\LL^{-a\cdot u}T^{b\cdot u}\right)\cdot\left(\prod_{i=1}^n \frac{\LL^{a_ie_i}}{T^{b_ie_i}-\LL^{a_ie_i}} \right)$$
Since $e=(e_1,\ldots,e_n)$ belongs to $S$, the first factor is a
polynomial with leading term $\LL^{-a\cdot e}T^{b\cdot e}$, so we
see that $C_{a,b}$ belongs to $\mathcal{R}'_{\X_0}$ and
$$\lim_{T\to\infty}C_{a,b}=1$$ as required. Lemma \ref{R} shows
that $C_{a,b}\in \mathcal{R}_{\X_0}$.
\end{proof}

\begin{prop}\label{bchange}
 For any generically smooth special formal $R$-scheme $\X$ and any integer $d>0$,
$$S(\X;\widehat{K^s})=S(\X\times_R R(d);\widehat{K(d)^s})$$ in
$\mathcal{M}_{\X_0}$.
\end{prop}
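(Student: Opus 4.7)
The strategy is to express both motivic volumes as values of $\lim_{T\to\infty}$ applied to an appropriate motivic zeta series, and to observe that the base change $R\to R(d)$ corresponds at the level of these series to the operator $\phi_d$ of Lemma \ref{divis}.

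First, I would unpack the definition of the motivic volume from \cite[7.39]{Ni-trace}, writing
$$S(\X;\widehat{K^s})=\lim_{T\to\infty} Z_\X(T)$$
for an explicit series $Z_\X(T)\in \mathcal{R}_{\X_0}$, the coefficient of $T^n$ of which is built from the class in $\mathcal{M}_{\X_0}$ of the space of $R(n)$-sections of $\X\to\mathrm{Spf}\,R$, weighted by an appropriate power of $\LL$. (This is exactly the reason the auxiliary rings $\mathcal{R}_{\X_0}\subset \mathcal{R}'_{\X_0}$ and the operator $\lim_{T\to\infty}$ were introduced above.) The same construction applied to $\X\times_R R(d)$, viewed as a special formal $R(d)$-scheme via the fixed uniformizer $t_d$, yields a series $Z_{\X\times_R R(d)}(T)\in\mathcal{R}_{\X_0}$ whose limit is $S(\X\times_R R(d);\widehat{K(d)^s})$.

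Next, I would establish the base-change identity
$$Z_{\X\times_R R(d)}(T^d)=\phi_d(Z_\X(T))\quad\text{in }\mathcal{M}_{\X_0}[[T]].$$
The point is that $R(d)(n)=R(nd)$ canonically, so an $R(d)(n)$-section of the $R(d)$-scheme $\X\times_R R(d)$ is the same as an $R(nd)$-section of the $R$-scheme $\X$; hence the coefficient of $T^n$ on the left matches the coefficient of $T^{nd}$ on the right, which is the defining property of $\phi_d$. Verifying that the $\LL$-weights assigned by the construction in \cite{Ni-trace} are compatible with this identification is the main technical step and the place where one must pay attention to how the normalization of the valuation changes under $R\to R(d)$.

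Finally, I would apply $\lim_{T\to\infty}$ to both sides of the identity. On the right, Lemma \ref{divis} gives $\lim_{T\to\infty}\phi_d(Z_\X(T))=\lim_{T\to\infty}Z_\X(T)=S(\X;\widehat{K^s})$. On the left, the substitution $T\mapsto T^d$ preserves the limit of any element of $\mathcal{R}'_{\X_0}$: if $Z_{\X\times_R R(d)}(T)=P(T)/Q(T)$ with $Q$ monic and $\deg P\leq \deg Q$, then $Z_{\X\times_R R(d)}(T^d)=P(T^d)/Q(T^d)$ satisfies the same hypotheses with degrees rescaled by $d$ and with the same leading coefficient of $P$, so the two limits agree. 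This yields
$$S(\X;\widehat{K^s})=S(\X\times_R R(d);\widehat{K(d)^s}),$$
as desired. The main obstacle is the weighting check in the base-change identity; once that is in place, Lemma \ref{divis} and the invariance of $\lim_{T\to\infty}$ under $T\mapsto T^d$ finish the argument.
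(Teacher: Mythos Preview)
Your approach is essentially the paper's. The paper reduces first (via \cite[7.38+39]{Ni-trace}) to the case where $\X_\eta$ carries a $\X$-bounded gauge form $\omega$, and then works with the volume Poincar\'e series $S(\X,\omega;T)=\sum_{n>0}\bigl(\int_{\X\times_R R(n)}|\omega(n)|\bigr)T^n$; the motivic volume is $-\lim_{T\to\infty}$ of this series. Once the gauge form is in place, the identification $(R(d))(n)\cong R(dn)$ immediately gives that the $n$-th coefficient for $\X\times_R R(d)$ equals the $(dn)$-th coefficient for $\X$, and Lemma~\ref{divis} finishes the argument. So what you flag as ``the main technical step'' (the compatibility of $\LL$-weights) is in fact a non-issue: the weights are packaged into the motivic integral of $|\omega(n)|$, and the base-change compatibility is automatic from $\omega(d)(n)=\omega(dn)$. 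Your formulation $Z_{\X\times_R R(d)}(T^d)=\phi_d(Z_\X(T))$ is actually a bit more precise than the paper's displayed identity, which omits the substitution $T\mapsto T^d$; your observation that $\lim_{T\to\infty}$ is invariant under this substitution is correct and needed.
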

\begin{proof}
By \cite[7.38+39]{Ni-trace} we may assume that $\X_\eta$ admits a
$\X$-bounded gauge form $\omega$ (in the sense of
\cite[2.11]{Ni-trace}). For each $n>0$ we denote by $\omega(n)$
the pull-back of $\omega$ to $\X_\eta\times_K K(n)$. This is a
$\X\times_R R(n)$-bounded gauge form.

 In \cite[4.9]{Ni-trace} we defined the
volume Poincar\'e series $S(\X,\omega;T)$ of $(\X,\omega)$ by
$$S(\X,\omega;T)=\sum_{n>0}\left(\int_{\X\times_R R(n)}|\omega(n)|\right)T^n\quad \in \mathcal{M}_{\X_0}[[T]] $$
(the coefficients are motivic integrals). It is clear from the
definition (and from our choice of uniformizer in $R(d)$) that
$$S(\X\times_R R(d),\omega(d);T)=S(\X,\omega;T)[d]$$
We showed in \cite[7.14]{Ni-trace} that $S(\X,\omega;T)$ is
rational; more precisely, it belongs to $\mathcal{R}_{\X_0}$.
 By definition,
$$S(\X;\widehat{K^s})=-\lim_{T\to \infty}S(\X,\omega;T)$$
 By Lemma \ref{divis},
 \begin{eqnarray*}
 S(\X;\widehat{K^s})&=&-\lim_{T\to
\infty}S(\X,\omega;T)\\&=&-\lim_{T\to \infty}S(\X\times_R
R(d),\omega(d);T) \\&=&S(\X\times_R R(d);\widehat{K(d)^s})
\end{eqnarray*}
\end{proof}
\begin{prop}\label{bchange-unr}
Let $k'$ be a field containing $k$, and put $R'=k'[[t]]$. Let $\X$
be a generically smooth special formal $R$-scheme, and put
$\X'=\X\widehat{\times}_R R'$. Then $S(\X';\widehat{K^s})$ is the
image of $S(\X;\widehat{K^s})$ under the base change morphism
$\mathcal{M}_{\X_0}\rightarrow \mathcal{M}_{\X'_0}$.
\end{prop}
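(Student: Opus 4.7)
My plan is to imitate the proof of Proposition \ref{bchange} and reduce the statement to the compatibility of the volume Poincar\'e series with base change on the residue field.

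First, using \cite[7.38+39]{Ni-trace} and a standard covering argument, I would reduce to the case where $\X_\eta$ admits a $\X$-bounded gauge form $\omega$ in the sense of \cite[2.11]{Ni-trace}. The pullback $\omega'$ of $\omega$ to $\X'_\eta = \X_\eta \widehat{\times}_K K'$ (where $K'=k'((t))$) should again be $\X'$-bounded; this is a formal check, since $\X'$-boundedness is defined in terms of norms of coordinate functions, and these are preserved under the isometric embedding $K \hookrightarrow K'$. Likewise the pullback $\omega'(n)$ on $\X'\times_{R'}R'(n)$ agrees with the base change of $\omega(n)$.

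The main step is to show the identity of volume Poincar\'e series
\[
S(\X',\omega';T) \;=\; \mathrm{bc}_{k'/k}\bigl(S(\X,\omega;T)\bigr)
\]
in $\mathcal{M}_{\X'_0}[[T]]$, where $\mathrm{bc}_{k'/k}$ is applied coefficient by coefficient via the base change morphism $\mathcal{M}_{\X_0} \to \mathcal{M}_{\X'_0}$. This reduces to proving that each coefficient, the motivic integral $\int_{\X\times_R R(n)}|\omega(n)|$, commutes with base change from $k$ to $k'$. This is essentially built into Sebag's theory of motivic integration on formal schemes: the integral is computed via truncations $\pi_m(\mathcal{L}(\X_s))$ of the Greenberg scheme (or via a resolution and the change-of-variables formula), and both constructions are compatible with base change on the ground field, so the class in the Grothendieck ring pulls back as expected. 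The hard part is really book-keeping to verify that the covering/resolution one chooses over $R$ can be base-changed to a similar covering/resolution over $R'$ that computes $S(\X',\omega';T)$; the characteristic-zero assumption (inherited from the setup of this section) ensures that resolutions of singularities behave well under arbitrary field extensions.

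Once the equality of Poincar\'e series is established, I would conclude as follows. By \cite[7.14]{Ni-trace}, $S(\X,\omega;T) \in \mathcal{R}_{\X_0}$, and clearly the base change morphism sends $\mathcal{R}_{\X_0}$ into $\mathcal{R}_{\X'_0}$ (since it sends $\LL$ to $\LL$). The operator $\lim_{T\to\infty}:\mathcal{R}'_{\X_0}\to \mathcal{M}_{\X_0}$ is defined by reading off a single coefficient of a presentation $P(T)/Q(T)$, so it commutes with the base change morphism. Therefore
\[
S(\X';\widehat{K^s}) \;=\; -\lim_{T\to\infty}S(\X',\omega';T) \;=\; -\lim_{T\to\infty}\mathrm{bc}_{k'/k}\bigl(S(\X,\omega;T)\bigr) \;=\; \mathrm{bc}_{k'/k}\bigl(S(\X;\widehat{K^s})\bigr),
\]
which is the claim. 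The main obstacle, as indicated above, is verifying the base change compatibility of the motivic integrals themselves; everything else is formal.
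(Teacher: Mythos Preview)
Your approach is correct and is essentially an unpacking of what the paper asserts. The paper's own proof of this proposition is a single sentence: ``This is obvious from the definition of the motivic volume.'' In other words, the author regards the compatibility of the motivic volume with unramified base change $k\hookrightarrow k'$ as immediate, without even passing through the Poincar\'e series explicitly.

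The reason the paper can say this is precisely the point you flag as the ``main obstacle'': the motivic integrals $\int_{\X\times_R R(n)}|\omega(n)|$ are built from classes of pieces of Greenberg schemes (or jet schemes), and these schemes satisfy $Gr_n(\X')\cong Gr_n(\X)\times_k k'$ essentially by construction, since Weil restriction commutes with base change on the residue field. So each coefficient of $S(\X,\omega;T)$ is sent to the corresponding coefficient of $S(\X',\omega';T)$ under $\mathrm{bc}_{k'/k}$, and the rest (rationality, compatibility of $\lim_{T\to\infty}$ with the ring map) is formal, exactly as you say. There is no genuine obstacle here; you are being more cautious than necessary.

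By contrast, for the ramified base change $R\to R(d)$ in Proposition~\ref{bchange} the Poincar\'e series really changes (one extracts the terms of index divisible by $d$), which is why that proof requires Lemma~\ref{divis}. The unramified case is genuinely simpler, and your write-up could be shortened accordingly.
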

\begin{proof}
This is obvious from the definition of the motivic volume.
\end{proof}

\subsection{Motivic volume of a rigid variety}
Let $\X$ be a generically smooth special formal $R$-scheme, and
assume that $\X$ is $stft$ or that $\X_\eta$ admits a universally
bounded gauge form in the sense of \cite[7.41]{Ni-trace}. Such a
universally bounded gauge form exists, in particular, if $\X$ is
the formal spectrum of a regular local $R$-algebra
\cite[7.23+42]{Ni-trace}.

 In
\cite[8.3]{NiSe} and \cite[7.43]{Ni-trace} we defined the motivic
volume $S(\X_\eta;\widehat{K^s})$ of $\X_\eta$ as the image of
$S(\X;\widehat{K^s})$ under the forgetful morphism
$\mathcal{M}_{\X_0}\rightarrow \mathcal{M}_k$ and we showed that
this definition only depends on $\X_\eta$ and not on the model
$\X$.

\begin{prop}\label{bchange-rig}
Let $d>0$ be an integer and
 $X$ a separated smooth
rigid $K$-variety. Assume that $X$ and $X\times_K K(d)$ admit
universally bounded gauge forms, or that $X$ is quasi-compact.
Then $$S(X\times_K K(d);\widehat{K(d)^s})=S(X;\widehat{K^s})$$ in
$\mathcal{M}_k$.
\end{prop}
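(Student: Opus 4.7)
The plan is to reduce Proposition~\ref{bchange-rig} to the formal-model version Proposition~\ref{bchange} by pushing the identity forward to $\mathcal{M}_k$ via the forgetful morphism. First I would pick a generically smooth special formal $R$-model $\X$ of $X$ which is either $stft$ or admits a $\X$-bounded gauge form $\omega$ extending a universally bounded gauge form on $X$. In the quasi-compact case such a model exists by Raynaud's theory; in the gauge-form case the existence follows directly from the definition of universal boundedness recalled in \cite[7.41--42]{Ni-trace}.

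Next I would form the base change $\X' := \X \widehat{\times}_R R(d)$. This is a generically smooth special formal $R(d)$-scheme, $stft$ whenever $\X$ is, whose generic fiber (as a $K(d)$-analytic space) is canonically isomorphic to $X \widehat{\times}_K K(d)$. Since $R$ and $R(d)$ share the residue field $k$, there is a canonical identification $\X'_0 \cong \X_0$ of $k$-schemes; in particular $\mathcal{M}_{\X'_0} = \mathcal{M}_{\X_0}$ and the two forgetful morphisms to $\mathcal{M}_k$ coincide. If $\omega$ is a $\X$-bounded gauge form on $X$, then its pull-back $\omega(d)$ is a $\X'$-bounded gauge form on $X \times_K K(d)$, exactly as used in the proof of Proposition~\ref{bchange}.

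Applying Proposition~\ref{bchange} to $\X$ then yields the equality
$$S(\X;\widehat{K^s}) \;=\; S(\X';\widehat{K(d)^s})$$
in $\mathcal{M}_{\X_0}$. Pushing forward under the forgetful morphism $\mathcal{M}_{\X_0} \to \mathcal{M}_k$, the left-hand side maps by definition to $S(X;\widehat{K^s})$ while the right-hand side maps to $S(X \times_K K(d);\widehat{K(d)^s})$, giving the desired equality in $\mathcal{M}_k$.

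The only point requiring care is the verification that the model property chosen for $\X$ (the $stft$ condition or the existence of a universally bounded gauge form) is inherited by $\X' = \X \widehat{\times}_R R(d)$; both checks are essentially the same bookkeeping that underlies the proof of Proposition~\ref{bchange} (using our compatible choice of uniformizer $t_d \in R(d)$ so that pull-backs of gauge forms remain universally bounded) and present no real obstacle.
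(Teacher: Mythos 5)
Your proof is correct and unpacks exactly what the paper leaves implicit: the paper's proof of Proposition~\ref{bchange-rig} is the single line ``This follows immediately from Proposition \ref{bchange},'' and your argument (pick a model $\X$, base-change to $\X\widehat{\times}_R R(d)$, identify $\X'_0\cong\X_0$, apply Proposition~\ref{bchange} and push forward along the forgetful morphism to $\mathcal{M}_k$) is precisely the intended reduction.
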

\begin{proof}
This follows immediately from Proposition \ref{bchange}.
\end{proof}
\begin{prop}\label{bchange-unr-rig}
Let $k'$ be any field containing $k$, and put $L=k'((t))$. Let $X$
be a separated smooth rigid $K$-variety. Assume that $X$ and
$X\times_K L$ admit universally bounded gauge forms, or that $X$
is quasi-compact. Then $S(X\times_K L;\widehat{L^s})$ is the image
of $S(X;\widehat{K^s})$ under the base change morphism
$\mathcal{M}_k\rightarrow \mathcal{M}_{k'}$.
\end{prop}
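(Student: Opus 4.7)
The plan is to reduce to Proposition \ref{bchange-unr}, the analogous statement for formal schemes, by using the fact that the motivic volume of a rigid variety is defined as the forgetful image of the motivic volume of a suitable formal model. First I would choose a generically smooth special formal $R$-model $\X$ of $X$: if $X$ is quasi-compact, take $\X$ to be a $stft$ formal $R$-scheme with $\X_\eta\cong X$, via Raynaud's theorem on formal models; if $X$ admits a universally bounded gauge form, take $\X$ such that $\X_\eta$ admits one. In either case, set $\X':=\X\widehat{\times}_R R'$. Since taking generic fibers commutes with base change along $R\to R'$, we have $\X'_\eta\cong X\widehat{\times}_K L$, while $\X'_0\cong \X_0\times_k k'$, and $\X'$ remains generically smooth by flat base change. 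If $\X$ is $stft$, so is $\X'$; in the gauge-form case, the pullback of a universally bounded gauge form on $\X_\eta$ is again universally bounded on $\X'_\eta$, so that $S(\X';\widehat{L^s})$ is defined and computes $S(\X'_\eta;\widehat{L^s})=S(X\widehat{\times}_K L;\widehat{L^s})$.

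Having set this up, I would apply Proposition \ref{bchange-unr} to deduce that $S(\X';\widehat{L^s})$ equals the image of $S(\X;\widehat{K^s})$ under the base change morphism $\mathcal{M}_{\X_0}\rightarrow \mathcal{M}_{\X'_0}$. The conclusion then follows by applying the forgetful morphisms to $\mathcal{M}_k$ and $\mathcal{M}_{k'}$ together with the commutativity of the square
$$\begin{CD} \mathcal{M}_{\X_0} @>>> \mathcal{M}_{\X'_0} \\ @VVV @VVV \\ \mathcal{M}_k @>>> \mathcal{M}_{k'} \end{CD}$$
where the horizontal arrows are the base change morphisms along $k\to k'$ and the vertical arrows are the forgetful morphisms. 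This commutativity is a formal fact about Grothendieck rings of varieties.

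The main obstacle I anticipate is in the non-quasi-compact case, verifying that the pullback of a universally bounded gauge form to the base-changed formal scheme remains universally bounded. This should follow from an inspection of the definition in \cite[7.41]{Ni-trace}, using that any $stft$ test morphism into $\X'$ factors, after a further base change from $R$ to $R'$, through a $stft$ test morphism into $\X$, so that the bounds controlling $\omega$ transfer directly to bounds controlling its pullback. Once this verification is in place, the remainder of the argument is purely formal.
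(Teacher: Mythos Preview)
Your approach is correct and is exactly the route the paper takes: the paper's proof is simply ``This follows immediately from Proposition \ref{bchange-unr}'', and you have spelled out the implicit reduction via a formal model and the commutative square of forgetful and base change morphisms.

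One small remark: the ``main obstacle'' you anticipate is not actually an obstacle. The hypothesis of the proposition already assumes that $X\times_K L$ admits a universally bounded gauge form, so you do not need to verify that the pullback of a universally bounded gauge form on $X$ remains universally bounded on $X\times_K L$; you simply use the assumed one. The well-definedness of $S(X\times_K L;\widehat{L^s})$ via the model $\X'$ is then immediate, since $\X'_\eta\cong X\times_K L$.
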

\begin{proof}
This follows immediately from Proposition \ref{bchange-unr}.
\end{proof}

\begin{remark}
Even though the valuation on $\widehat{K^a}$ is not discrete and
the construction of the motivic volume does not apply to
$\widehat{K^a}$-analytic spaces, the above results justify the
hope that one can associate a motivic volume $Vol(X)$ to separated
smooth quasi-compact rigid varieties $X$ over $\widehat{K^a}$ (or
an even larger class of $\widehat{K^a}$-analytic spaces). This
volume should have the property that $Vol(Y\widehat{\times}_K
\widehat{K^a})=S(Y;\widehat{K^s})$ when $Y$ is a separated smooth
quasi-compact rigid $K$-variety, and opens the way to a theory of
motivic integration on $\widehat{K^a}$-analytic spaces.
\end{remark}
\subsection{Motivic Milnor fiber}\label{sec-mmil}
We start with an auxiliary result.
\begin{lemma}\label{auxil}
For any morphism of generically smooth special formal $R$-schemes
$h:\mY\rightarrow \X$ such that $h_\eta$ is an isomorphism, we
have $$S(\mY;\widehat{K^s})=S(\X;\widehat{K^s})$$ in
$\mathcal{M}_{\X_0}$. Here the left hand side is viewed as an
element of $\mathcal{M}_{\X_0}$ via the forgetful morphism
$\mathcal{M}_{\mY_0}\rightarrow \mathcal{M}_{\X_0}$ induced by
$h_0$.
\end{lemma}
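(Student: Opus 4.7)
The plan is to reduce to an equality of volume Poincaré series and then apply $-\lim_{T\to\infty}$. First, using the open-cover construction of \cite[7.38+39]{Ni-trace}, I would reduce to the case where $\X_\eta$ carries a $\X$-bounded gauge form $\omega$; then $h_\eta^*\omega$ is a gauge form on $\mY_\eta$, and I would verify it is $\mY$-bounded by unwinding the definition of boundedness (preserved under the morphism $h$ of formal $R$-schemes because $h^{-1}\mathcal{O}_\X \to \mathcal{O}_\mY$ has image in the structure sheaf, so any admissible formal model of $\omega$ on $\X$ pulls back to one on $\mY$).

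The core step is to show that for every integer $n>0$,
\[
\int_{\mY\times_R R(n)}|h_\eta^*\omega(n)|
\;=\;\int_{\X\times_R R(n)}|\omega(n)|
\]
in $\mathcal{M}_{\X_0}$, where the left-hand side is viewed in $\mathcal{M}_{\X_0}$ via the forgetful morphism induced by $h_0$. To do this I would pick a weak Néron smoothening $\mathfrak{W}\to \mY\times_R R(n)$. Since $h_\eta$ is an isomorphism, so is its base change to $K(n)$, and therefore the composite $\mathfrak{W}\to \mY\times_R R(n)\to \X\times_R R(n)$ is again a weak Néron smoothening of $\X\times_R R(n)$. The motivic integral on either side is then a sum, indexed by the components $C$ of $\mathfrak{W}_s$, of contributions $[C]\cdot \LL^{-\mathrm{ord}_C\omega(n)}$; the orders along $C$ are the same (because $\omega(n)$ and $h_\eta^*\omega(n)$ are identified via the isomorphism $h_\eta\times_K K(n)$), while the two classes of $C$ differ only by the specialization maps $\mathfrak{W}_s\to \X_0$, respectively $\mathfrak{W}_s\to \mY_0$, which satisfy the obvious compatibility via $h_0$. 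After pushforward along $h_0$, the two sums coincide.

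Coefficient-by-coefficient this yields $S(\mY,h_\eta^*\omega;T)=S(\X,\omega;T)$ in the subring $\mathcal{R}_{\X_0}$ of $\mathcal{M}_{\X_0}[[T]]$, and then $-\lim_{T\to\infty}$ delivers the claimed identity $S(\mY;\widehat{K^s})=S(\X;\widehat{K^s})$.

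The main obstacle I anticipate is the bookkeeping between the two distinct target Grothendieck rings $\mathcal{M}_{\mY_0}$ and $\mathcal{M}_{\X_0}$: one has to check both that a weak Néron smoothening is inherited under $h$ (this genuinely uses that $h_\eta$ is an isomorphism on the generic fiber, not merely étale or proper-birational) and that the components of the smoothening carry consistent classes after pushforward by $h_0$. A secondary subtlety is the reduction to the existence of a $\X$-bounded gauge form when no such form exists globally; here one must argue, as in \cite[7.38+39]{Ni-trace}, that the construction patches over an open cover and that the resulting element of $\mathcal{M}_{\X_0}$ is independent of the cover chosen, so that the same patching works simultaneously for $\X$ and $\mY$.
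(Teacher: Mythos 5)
Your proof is correct and is the natural unpacking of the paper's one-line reference to the definition of the motivic volume; the paper itself gives no details beyond citing the definition. The crucial point you supply---that a weak N\'eron smoothening of $\mY\times_R R(n)$ composed with $h$ remains a weak N\'eron smoothening of $\X\times_R R(n)$, since the smoothening condition depends only on the generic fiber and $h_\eta$ is an isomorphism, so the two volume Poincar\'e series agree term by term after pushforward along $h_0$---is precisely what makes the claim ``straightforward from the definition.''
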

\begin{proof}
This is straightforward from the definition of the motivic volume
in \cite[7.39]{Ni-trace}.
\end{proof}

 Let $X$
be a $k$-variety of pure dimension $m$, and let $f:X\rightarrow
\A^1_k=\Spec k[t]$ be a flat morphism with smooth generic fiber.
We fix a closed point $x$ on the special fiber $X_s$ of $f$. We
denote by $\X$ the $t$-adic completion of $f$, and by $\X_x$ the
completion of $f$ at $x$, i.e. the special formal $R$-scheme $\Spf
\widehat{\mathcal{O}}_{X,x}$. Then $(\X_x)_\eta=\mathscr{F}_x$ is
the analytic Milnor fiber of $f$ at $x$. We view $\mathscr{F}_x$
as a $k'((t))$-analytic space, with $k'$ the residue field of $x$.

If $X$ is smooth at $x$ then $\X_x$ is the formal spectrum of a
regular local $R$-algebra, so $S(\mathscr{F}_x;\widehat{K^s})$ is
defined and equal to $S(\X_x;\widehat{K^s})\in \mathcal{M}_x$. If
$X$ is not smooth at $x$, it is not clear if $\mathscr{F}_x$
admits a universally bounded gauge form. However, we still have
the following property.

\begin{prop}
The analytic Milnor fiber $\mathscr{F}_x$, viewed as a
$k'((t))$-analytic space, uniquely determines the motivic volume
$$S(\X_x;\widehat{K^s})\in \mathcal{M}_x$$
\end{prop}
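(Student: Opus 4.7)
The plan is to use embedded resolution of singularities to replace the (generally non-$stft$) formal $R$-scheme $\X_x$ by a smooth special formal $R$-model of $\mathscr{F}_x$, and then invoke Lemma \ref{auxil} to absorb the freedom in the choice of model. First, using embedded resolution of $(X,X_s)$ at $x$---available since $k$ has characteristic zero---I would produce a proper morphism $h:X'\to X$ of $k$-varieties such that $X'$ is smooth, $h$ is an isomorphism over $X\setminus\{x\}$, the special fiber $(X')_s$ is a strict normal crossings divisor, and $E:=h^{-1}(x)$ is a union of irreducible components of $(X')_s$. Let $\mZ$ denote the formal completion of $X'$ along $E$; it is a smooth, generically smooth special formal $R$-scheme with $\mZ_0=E$, and the completion of $h$ yields a morphism of special formal $R$-schemes $\widehat{h}:\mZ\to\X_x$. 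A standard tube-of-a-proper-morphism argument shows that $\widehat{h}_\eta:\mZ_\eta\to\mathscr{F}_x$ is an isomorphism of $k'((t))$-analytic spaces, and Lemma \ref{auxil} then yields
$$S(\X_x;\widehat{K^s})=S(\mZ;\widehat{K^s})\quad\text{in }\mathcal{M}_x,$$
where the right-hand side denotes the image of $S(\mZ;\widehat{K^s})\in\mathcal{M}_{\mZ_0}$ under the forgetful morphism induced by $\widehat{h}_0:E\to\Spec k'$.

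The remaining task is to show that this right-hand side depends only on $\mathscr{F}_x$ as a $k'((t))$-analytic space (with its intrinsic specialization to $\Spec k'$), and not on the choices of germ $(X,f,x)$ or resolution $h$. Given two such models $\mZ_1,\mZ_2$ arising from the above procedure, the strategy is to produce a common refinement: a generically smooth special formal $R$-scheme $\mZ_3$ equipped with proper morphisms $\mZ_3\to\mZ_i$ that are isomorphisms on generic fibers, compatibly with the identification $\mZ_{1,\eta}\cong\mathscr{F}_x\cong\mZ_{2,\eta}$. Two further applications of Lemma \ref{auxil} would then deliver the equality of motivic volumes in $\mathcal{M}_{k'}=\mathcal{M}_x$.

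The main obstacle will be constructing the common refinement $\mZ_3$. Since $\mZ_1,\mZ_2$ are only \emph{special} (not $stft$) formal $R$-schemes and their generic fibers need not be quasi-compact, Raynaud's classical theorem on formal models of rigid analytic spaces does not apply directly. A viable strategy is to exhaust $\mathscr{F}_x$ by quasi-compact admissible opens, apply Raynaud locally to obtain refinements of the induced $stft$ models, and then patch the resulting local refinements using the functoriality built into Lemma \ref{auxil}; an alternative is to construct $\mZ_3$ algebraically by a further embedded resolution dominating the chosen resolutions of $X_1$ and $X_2$, compatibly with the identification of the formal germs at $x_1$ and $x_2$. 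A subtler secondary point is ensuring compatibility of the $k'$-structures on the reductions $\mZ_{i,0}$ that are used to compare the forgetful images in $\mathcal{M}_x=\mathcal{M}_{k'}$; this is automatic since the specialization map $sp:\mathscr{F}_x\to\Spec k'$ is intrinsic to the $k'((t))$-analytic structure of $\mathscr{F}_x$.
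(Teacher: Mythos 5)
Your first step is correct and essentially parallels the spirit of Lemma \ref{auxil}: one may replace $\X_x$ by a nicer model with isomorphic generic fiber without changing the motivic volume. But the core of the proposition---that the volume is an invariant of the $k'((t))$-analytic space $\mathscr{F}_x$ alone, and not of the germ $(X,f,x)$ that produced it---is precisely the part you flag as ``the main obstacle'' and then only sketch. That gap is real. Your proposed common refinement $\mZ_3$ for two models $\mZ_1$, $\mZ_2$ is not constructed; as you note, Raynaud's flattening/refinement theory is for $stft$ formal schemes with quasi-compact generic fiber and does not apply here, and your suggested workarounds (patching local refinements, or a dominating embedded resolution) are problematic: the two models may arise from entirely unrelated germs $(X_1,f_1,x_1)$ and $(X_2,f_2,x_2)$ for which there is no common variety to resolve, and the gluing of local $stft$ refinements into a global special formal $R$-scheme that maps properly to both $\mZ_1$ and $\mZ_2$ is far from routine.

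The paper's proof sidesteps this entirely with a much shorter and genuinely different idea. Instead of resolving, it passes to the normalization $\widetilde{\X}_x\to\X_x$, which induces an isomorphism on generic fibers (since $\mathscr{F}_x$ is smooth and normalization commutes with taking generic fibers), so $S(\widetilde{\X}_x;\widehat{K^s})=S(\X_x;\widehat{K^s})$ by Lemma \ref{auxil}. The decisive ingredient---absent from your argument---is de Jong's theorem \cite[7.4.1]{dj-formal}: the $k'[[t]]$-algebra of \emph{power-bounded} analytic functions on $\mathscr{F}_x$ is canonically isomorphic to $\mathcal{O}(\widetilde{\X}_x)$. Thus $\widetilde{\X}_x$ is recovered intrinsically from $\mathscr{F}_x$, and with it the motivic volume. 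No comparison of resolutions is needed. I'd recommend you look for a \emph{canonical} model recoverable from the analytic space rather than trying to compare two arbitrary ones; the normalization is exactly such a model, and de Jong's result is what makes it accessible from the analytic side.
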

\begin{proof}
The normalization morphism $\widetilde{\X}_x\rightarrow \X_x$ is a
morphism of special formal $R$-schemes which induces an
isomorphism between the generic fibers because $\mathscr{F}_x$ is
normal (even smooth) and normalization commutes with taking
generic fibers \cite[2.1.3]{conrad}. By Lemma \ref{auxil},
$$S(\widetilde{\X}_x;\widehat{K^s})=S(\X_x;\widehat{K^s})\ \in \mathcal{M}_x$$
Moreover, the $k'[[t]]$-algebra of power-bounded analytic
functions on $\mathscr{F}_x$ is canonically isomorphic to
$\mathcal{O}(\widetilde{\X}_x)$ by \cite[7.4.1]{dj-formal}, so
$\mathscr{F}_x$ determines $S(\X_x;\widehat{K^s})$.
\end{proof}

\begin{definition}\label{motnearby}
If $\mY$ is a generically smooth special formal $R$-scheme of pure
relative dimension $d$, then we define the motivic nearby cycles
$\mathcal{S}_{\mY}$ of $\mY$ by
$$\mathcal{S}_{\mY}=\LL^{d}\cdot S(\mY;\widehat{K^s})\in
\mathcal{M}_{\mY_0}$$ Let $X$ be a $k$-variety of pure dimension,
$f:X\rightarrow \Spec k[t]$ a flat morphism with smooth generic
fiber, and $x$ a closed point of the special fiber $X_s$ of $f$.
Denote by $\X/R$ the $t$-adic completion of $f$ and by $\X_x/R$
the formal completion of $f$ at $x$. We define the motivic Milnor
fiber $\mathcal{S}_{f,x}$ of $f$ at $x$ by
$$\mathcal{S}_{f,x}=\mathcal{S}_{\X_x}\in \mathcal{M}_x$$
 and the motivic nearby cycles
$\mathcal{S}_f$ of $f$ by $$\mathcal{S}_f=\mathcal{S}_{\X}\in
\mathcal{M}_{\X_0}$$
\end{definition}

If $X$ is smooth, then Denef and Loeser defined the motivic nearby
cycles of $f$ and the motivic Milnor fiber of $f$ at $x$ in
\cite[\S\,3.5]{DL3}. By \cite[9.8]{Ni-trace} these objects
coincide with the ones introduced in Definition \ref{motnearby}.
Denef and Loeser showed that if $k=\C$ and $X$ is smooth,
$\mathcal{S}_f$ and $\mathcal{S}_{f,x}$ have the same Hodge
realization as the cohomological nearby cycles, resp. Milnor
fiber, in an appropriate Grothendieck ring of mixed Hodge modules
with monodromy action; see \cite[4.2.1]{DL5} and
\cite[3.5.5]{DL3}.

\subsection{Expression in terms of a semi-stable model}
Now we come to the main result of this section: an expression for
the motivic volume of a formal scheme in terms of a semi-stable
model. We'll freely use the notation and terminology from
\cite[\S\,2.5]{Ni-trace} concerning strict normal crossing
divisors on formal schemes.
\begin{definition}
Let $\mU$ be a generically smooth special formal $R$-scheme. A
strictly semi-stable model $(\mV,g)$ for $\mU$ consists of the
following data: \begin{enumerate} \item an integer $d>0$, which we
call the ramification index of the model $(\mV,g)$, \item a
regular special formal $R(d)$-scheme $\mV$, such that $\mV_s$ is a
reduced strict normal crossing divisor on $\mV$, \item a morphism
of special formal $R$-schemes $g:\mV\rightarrow \mU$ which induces
an isomorphism of $K(d)$-analytic spaces $\mV_\eta\cong
\mU_\eta\times_K K(d)$.
\end{enumerate}
\end{definition}
Such a strictly semi-stable model exists, in particular, when
$\mY$ is the formal completion of a generically smooth $stft$
formal $R$-scheme along a closed subscheme of its special fiber
(see the remark following Theorem \ref{hom-mil}).

\begin{theorem}\label{motmil}
If $\mU$ is a generically smooth special formal $R$-scheme and
$(\mV,g)$ is a strictly semi-stable model for $\mU$, with
$\mV_s=\sum_{i\in I}\mE_i$, then
\begin{equation}\label{globss}
\mathcal{S}_{\mU}=\sum_{\emptyset \neq J\subset
I}(1-\LL)^{|J|-1}[E_J^o]\ \in \mathcal{M}_{\mU_0}\end{equation} In
particular, the right hand side does not depend on the chosen
strictly semi-stable model.
\end{theorem}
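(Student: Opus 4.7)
The plan is to reduce to an explicit motivic-integral computation on the strictly semi-stable model $\mV$.

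\textbf{Reduction.} Let $d$ be the ramification index of $(\mV,g)$. By Proposition~\ref{bchange} we have
$$S(\mU;\widehat{K^s})=S(\mU\times_R R(d);\widehat{K(d)^s})\quad\text{in}\ \mathcal{M}_{\mU_0}.$$
The morphism $g$ factors through an $R(d)$-morphism $\mV\to\mU\times_R R(d)$ which, by the definition of a strictly semi-stable model, is an isomorphism on generic fibers. Lemma~\ref{auxil} therefore identifies the above quantity with $(g_0)_*S(\mV;\widehat{K(d)^s})$, where $g_0\colon\mV_0\to\mU_0$ is the induced morphism on reductions and the pushforward is along the corresponding forgetful morphism $\mathcal{M}_{\mV_0}\to\mathcal{M}_{\mU_0}$. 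Writing $d_\mU$ for the pure relative dimension of $\mU$, the theorem reduces to proving
$$\LL^{d_\mU}\,S(\mV;\widehat{K(d)^s})=\sum_{\emptyset\neq J\subset I}(1-\LL)^{|J|-1}[E_J^o]\quad\text{in}\ \mathcal{M}_{\mV_0}.$$

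\textbf{Explicit computation on $\mV$.} Since $\mV$ is strictly semi-stable, it is étale-locally of the form $\Spf R(d)\{x_0,\ldots,x_m\}/(x_0\cdots x_p-t_d)$, and one can build a $\mV$-bounded gauge form $\omega$ on $\mV_\eta$ by taking Poincaré residues of $(dx_0\wedge\cdots\wedge dx_m)/(x_0\cdots x_p-t_d)$ in each chart (these agree up to units, which is enough to define the motivic integral globally). The explicit rationality result for the volume Poincaré series $S(\mV,\omega;T)$, established in the spirit of Denef-Loeser in \cite[7.14]{Ni-trace}, expresses it as a sum indexed by the strata $E_J^o$ of $\mV_s$:
$$S(\mV,\omega;T)=\sum_{\emptyset\neq J\subset I}[E_J^o]\,R_J(T)\ \in\mathcal{R}_{\mV_0},$$
in which the contribution of $E_J^o$ is governed by the multiplicities $N_i$ of the components $\mE_i$ and the order of $\omega$ along each of them. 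Because every $N_i=1$ and $\omega$ is nowhere-vanishing, the auxiliary cyclic covers $\widetilde{E}_J^o$ that normally appear in the Denef-Loeser formula collapse to the $E_J^o$ themselves, and $R_J(T)$ reduces to an explicit universal rational function of $T$ depending only on $|J|$ and $d_\mU$. A direct computation of $-\lim_{T\to\infty}R_J(T)$ as a ratio of leading coefficients after clearing denominators yields $\LL^{-d_\mU}(1-\LL)^{|J|-1}$; multiplication by $\LL^{d_\mU}$ then gives the desired identity.

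\textbf{Main obstacle.} The technical core of the argument is the explicit calculation of the volume Poincaré series on the semi-stable model $\mV$: constructing a suitable gauge form, verifying its $\mV$-boundedness, invoking the rationality formula of \cite[\S\,7]{Ni-trace}, and correctly tracking the signs and powers of $\LL$ through the passage to $-\lim_{T\to\infty}$. The clean, model-independent shape of the answer is ultimately a consequence of the fact that every $N_i=1$, which eliminates the Galois-covering contributions that normally obstruct model-independence in the general log-resolution setting; once the formula is established, the asserted independence from the chosen semi-stable model follows \emph{a posteriori} from the intrinsic definition of $\mathcal{S}_\mU$.
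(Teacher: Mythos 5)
Your reduction steps are exactly those of the paper: Proposition~\ref{bchange} to change the base to $R(d)$ (or equivalently to arrange $d=1$), and Lemma~\ref{auxil} to replace $\mU$ by the strictly semi-stable model $\mV$, reducing everything to a single identity on $\mV$. After that the paths diverge: the paper closes the argument in one line by invoking \cite[7.36]{Ni-trace}, which is precisely the ``resolution formula'' expressing the motivic volume of a formal scheme in terms of a strict normal crossings special fiber, whereas you attempt to re-derive that identity directly by constructing a gauge form, appealing to the rationality result \cite[7.14]{Ni-trace}, and computing $-\lim_{T\to\infty}$. The mathematical content is the same (you are essentially re-proving \cite[7.36]{Ni-trace} specialized to the reduced semi-stable case), but as written your final step has two real gaps. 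First, the ``gauge form up to units'' construction does not produce a global $\mV$-bounded gauge form; patching local Poincar\'e residues that agree only up to units on overlaps is a nontrivial obstruction problem, and the machinery \cite[7.38+39]{Ni-trace} used in Proposition~\ref{bchange} exists exactly to circumvent it. Second, you never actually write down the rational functions $R_J(T)$ nor carry out the limit; you assert the answer is $\LL^{-d_\mU}(1-\LL)^{|J|-1}$ without supporting computation, which is the one step where sign and $\LL$-power bookkeeping can go wrong. If you made the last step rigorous you would be reconstructing the proof of \cite[7.36]{Ni-trace}; given that the paper can simply cite it, the cleaner path is to invoke that result directly rather than reprove it by hand.
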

\begin{proof}
 By Proposition \ref{bchange}, we may assume that the
ramification index $d$ of the strictly semi-stable model is equal
to one, i.e. that $g_\eta:\mV_\eta\rightarrow \mU_\eta$ is an
isomorphism. Then by Lemma \ref{auxil} we have
$\mathcal{S}_{\mU}=\mathcal{S}_{\mV}$ in $\mathcal{M}_{\mU_0}$,
and it follows from \cite[7.36]{Ni-trace} that
\begin{equation*}\label{express}
\mathcal{S}_{\mV}=\sum_{\emptyset \neq J\subset
I}(1-\LL)^{|J|-1}[E_J^o]\ \in \mathcal{M}_{\mV_0}\end{equation*}
\end{proof}
\begin{remark}
If the formal $R$-scheme $\mV$ in the statement of Theorem
\ref{motmil} is $stft$, then $E_J^o$ coincides with the stratum
$(\mV_s)^o_J$ of the strictly semi-stable $k$-scheme $\mV_s$
(Section \ref{subsec-strictsemi}) for any $\emptyset \neq J\subset
I=Irr(\mV_s)$.
\end{remark}
\begin{cor}\label{cor-motmil}
Consider $f:X\rightarrow \Spec k[t]$, $x$ and $\X_x$ as in
Definition \ref{motnearby}. If $(\mV,g)$ is a strictly semi-stable
model for $\X_x$, with $\mV_s=\sum_{i\in I}\mE_i$, then
\begin{equation}\label{locss}
\mathcal{S}_{f,x}=\sum_{\emptyset\neq J\subset
I}(1-\LL)^{|J|-1}[E_J^o] \ \in \mathcal{M}_{x}\end{equation} In
particular, the right hand side does not depend on the chosen
strictly semi-stable model.
\end{cor}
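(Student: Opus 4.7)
The plan is to derive this as an immediate application of Theorem \ref{motmil} to $\mU = \X_x$. First I would observe that $\X_x = \Spf \widehat{\mathcal{O}}_{X,x}$ is a generically smooth special formal $R$-scheme (generically smooth because $f$ has smooth generic fiber, and by \cite[0.2.7]{bert} the generic fiber $(\X_x)_\eta$ is the analytic Milnor fiber $\mathscr{F}_x$), and that a strictly semi-stable model exists, for example by the remark following Theorem \ref{hom-mil}. Thus the hypotheses of Theorem \ref{motmil} are satisfied.

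Applying that theorem, the given strictly semi-stable model $(\mV,g)$ yields the identity
$$\mathcal{S}_{\X_x} = \sum_{\emptyset\neq J\subset I}(1-\LL)^{|J|-1}[E_J^o]$$
in $\mathcal{M}_{(\X_x)_0}$, where each class $[E_J^o] \in \mathcal{M}_{\mV_0}$ is pushed forward via $g_0:\mV_0\to (\X_x)_0$. The next step is to identify $\mathcal{M}_{(\X_x)_0}$ with $\mathcal{M}_x$. Since $x$ is a closed point of $X$, the local ring $\widehat{\mathcal{O}}_{X,x}$ has a single prime ideal of definition (given by the maximal ideal plus any prescribed ideal of definition of the special fiber), so the reduction $(\X_x)_0$ is $\Spec k(x)$, and $\mathcal{M}_{(\X_x)_0}=\mathcal{M}_x$. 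Combining this with Definition \ref{motnearby}, which sets $\mathcal{S}_{f,x}=\mathcal{S}_{\X_x}$, gives the desired formula in $\mathcal{M}_x$.

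The independence of the right-hand side from the chosen strictly semi-stable model is automatic, since the left-hand side $\mathcal{S}_{f,x}$ is intrinsic to $(f,x)$. There is no real obstacle beyond making sure the pushforward bookkeeping is stated correctly and that the base change behavior of the motivic volume established in Proposition \ref{bchange} (already used inside the proof of Theorem \ref{motmil}) is properly inherited; no new computation is required at the level of the corollary.
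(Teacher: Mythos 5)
Your proposal is correct and follows the same route the paper implicitly takes: the corollary is stated with no separate proof precisely because it is the specialization of Theorem \ref{motmil} to $\mU=\X_x$, with the only bookkeeping being that $(\X_x)_0=\Spec k(x)$ so $\mathcal{M}_{(\X_x)_0}=\mathcal{M}_x$ and that $\mathcal{S}_{f,x}:=\mathcal{S}_{\X_x}$ by Definition \ref{motnearby}. Your explicit check of the reduction and the independence remark are both accurate.
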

Note in particular that any strictly semi-stable model
$(Y,g,\varphi)$ of the germ $(f,x)$ (in the sense of Definition
\ref{red}) induces a strictly semi-stable model for $\X_x$ by
taking the formal completion of $Y$ along $\varphi^{-1}(x)$ (of
course, the projectivity condition can be omitted in Definition
\ref{red}). Therefore, (\ref{locss}) also gives an expression for
$\mathcal{S}_{f,x}$ in terms of a semi-stable model of $(f,x)$.

 As a special case, if $X$ is smooth, we get an
expression for Denef and Loeser's motivic nearby cycles and
motivic Milnor fiber in terms of a strictly semi-stable model of
$f$, resp. of $(f,x)$. This expression is not clear from their
definition \cite[\S\,3.5]{DL3}.

Moreover, combining Theorem \ref{motmil} with the formula in
\cite[7.36]{Ni-trace} we get an expression for the right hand
sides of (\ref{globss}) and (\ref{locss}) in terms of a resolution
of singularities of $\mU$, resp. $\X_x$.

\begin{remark} Theorem \ref{motmil} makes it possible to compare our
notion of motivic nearby cycles and of motivic volume of a rigid
variety with the ones introduced by Ayoub
\cite{ayoub}\cite{ayoub-these}, after specialization to an
appropriate Grothendieck ring of $k$-motives. Details will appear
elsewhere.
\end{remark}

Theorem \ref{motmil} is similar in spirit to Theorem \ref{hom-mil}
and the subsequent remark. However, since motivic integrals take
their values in a Grothendieck ring, all ``non-additive''
information is lost when taking the motivic volume
$S(\X_x;\widehat{K^s})$. As we've seen, non-archimedean geometry
provides a powerful additional tool to prove independence results
of this type.

\bibliographystyle{hplain}
\bibliography{wanbib,wanbib2}
\end{document}